\documentclass[10pt]{article}

\usepackage{amsfonts}
\usepackage{marginnote}
\usepackage{amsmath,amsthm,amscd,amssymb,mathrsfs,setspace}
\usepackage{latexsym,epsf,epsfig}
\usepackage{cite}
\usepackage{color}
\usepackage[hmargin=1.2in,vmargin=1in]{geometry}
\usepackage[T1]{fontenc}
\usepackage[utf8]{inputenc}
\usepackage{combelow}
\usepackage{newunicodechar}
\usepackage{tikz}
\usepackage{mathtools}
\usepackage{tabularx}
\usepackage{blindtext}
\usepackage{hyperref}
\usepackage{soul}

\setcounter{MaxMatrixCols}{10}

  \chardef\forshowkeys=0
  \chardef\refcheck=0
  \chardef\showllabel=0
  \chardef\sketches=0
%  \chardef\showcolors=0

\ifnum\forshowkeys=1
  
  \usepackage[notref,notcite,color]{showkeys}
\fi

\ifnum\refcheck=1
  \usepackage{refcheck}
\fi

\newcommand{\R}{\mathbb{R}}

\newcommand{\Z}{ \mathbb{Z}}

\theoremstyle{plain}
\newtheorem{Theorem}{Theorem}[section]

\newtheorem{Lemma}[Theorem]{Lemma}
\newtheorem{Remark}[Theorem]{Remark}

\newtheorem{assumption}{Assumption}
\numberwithin{equation}{section}
\numberwithin{Theorem}{section}
%\numberwithin{remark}{section}
%\numberwithin{assumption}{section}
%\numberwithin{condition}{section}
\linespread{1.02}
\frenchspacing

\def\lec{\lesssim}

\ifnum\showllabel=1
 \def\llabel#1{\marginnote{\color{lightgray}\rm\small(#1)}[-0.0cm]\notag}
 %\def\llabel#1{\marginnote{\hskip1.5truecm\boxed{#1}}[-0.7cm]\nonumber}
%\def\llabel#1{\marginnote{\tiny\boxed{#1}}[-0.7cm]\nonumber}
% \def\llabel#1{\marginnote{\tiny(#1)}[-0.7cm]\nonumber}
%\def\llabel#1{\marginnote{\color{lightgray}\rm\small(#1)}[-0.0cm]\notag}
%%%%%%%%%%%%%%%%%%%\def\llabel#1{\label{#1}}
%  \reversemarginpar
%\def\llabel#1{\notag}
%  \def\llabel#1{\label{#1}}
\else
% % \def\llabel#1{\nonumber}
\def\llabel#1{\notag}
\fi

\numberwithin{equation}{section}
\definecolor{mygray}{rgb}{.6,.6,.6}
\definecolor{myblue}{rgb}{9, 0, 1}
\definecolor{colorforkeys}{rgb}{1.0,0.0,0.0}
\newlength\mytemplen
\newsavebox\mytempbox
\makeatletter
\newcommand\mybluebox{%
\@ifnextchar[%]
{\@mybluebox}%
{\@mybluebox[0pt]}}
\def\@mybluebox[#1]{%
\@ifnextchar[%]
{\@@mybluebox[#1]}%
{\@@mybluebox[#1][0pt]}}
\def\@@mybluebox[#1][#2]#3{
\sbox\mytempbox{#3}%
\mytemplen\ht\mytempbox
\advance\mytemplen #1\relax
\ht\mytempbox\mytemplen
\mytemplen\dp\mytempbox
\advance\mytemplen #2\relax
\dp\mytempbox\mytemplen
\colorbox{myblue}{\hspace{1em}\usebox{\mytempbox}\hspace{1em}}}
\makeatother
%Igor's various macros  varmac
\def\am{a_{\text{m}}}
\def\ab{a_{\text{b}}}

\def\eold{\colb }
\def\rr{r}

\def\qqq{u}

\def\inon#1{\hbox{\quad{}}\hbox{#1}}                %in or on
\def\onon#1{\inon{on~$#1$}}

\def\andand{\text{\indeq and\indeq}}

\def\Tr{\mathop{\rm Tr}\nolimits}    
\def\div{\mathop{\rm div}\nolimits}
\def\curl{\mathop{\rm curl}\nolimits}
\def\Sym{\mathop{\rm Sym}\nolimits}
  
\def\supp{\mathop{\rm supp}\nolimits}
\def\indeq{\quad{}}

\def\colb{\color{black}}

\definecolor{colorgggg}{rgb}{0.1,0.5,0.3}
\definecolor{colorllll}{rgb}{0.0,0.7,0.0}
\definecolor{colorhhhh}{rgb}{0.3,0.75,0.4}
\definecolor{colorpppp}{rgb}{0.7,0.0,0.2}
\definecolor{coloroooo}{rgb}{0.45,0.0,0.0}
\definecolor{colorqqqq}{rgb}{0.1,0.7,0}

\def\cole{\color{coloroooo}}
\def\cole{{}}

   %color for weight
\definecolor{coloraaaa}{rgb}{0.6,0.6,0.6}%%%out

\def\comma{ {\rm ,\qquad{}} }            
          
\def\nts#1{{\color{blue}\hbox{\bf ~#1~}}}

\def\fractext#1#2{{#1}/{#2}}

 \def\colb{\color{black}}

\def\comma{ {\rm ,\qquad{}} }        

\begin{document}
\baselineskip=5.8truemm
\title{Inviscid fluid interacting with a nonlinear two-dimensional plate}

\author{\small
\begin{tabular}[t]{c@{\extracolsep{1em}}cc@{\extracolsep{1em}}ccc@{\extracolsep{1em}}}
 Abhishek Balakrishna & Igor Kukavica & Boris Muha & Amjad Tuffaha \\ 
\it USC & \it USC& \it UNiZG & \it AUS\\
\it Los Angeles, CA & \it Los Angeles, CA & \it Zagreb, Croatia &\it Sharjah, UAE\\  
ab45315@usc.edu & kukavica@usc.edu & borism@math.hr  & atufaha\char'100aus.edu \\
\end{tabular}
%\begin{tabular}[t]{c@{\extracolsep{1em}}cc@{\extracolsep{1em}}ccc@{\extracolsep{1em}}}
% Igor Kukavica & Abhishek Balakrishna & Amjad Tuffaha &  Boris Muha\\ 
%\it USC & \it USC& \it AUS & \it UNiZG\\
%\it Los Angeles, CA & \it Los Angeles, CA & \it Sharjah, UAE& \it Zagreb, Croatia\\  
%kukavica@usc.edu & ab45315@usc.edu  & atufaha\char'100aus.edu &  borism@math.hr
%\end{tabular}
}

\maketitle

\bf Abstract\rm: We address a moving boundary problem that consists of a system of equations
modeling an inviscid fluid interacting with a two-dimensional nonlinear Koiter
plate at the boundary. We derive a priori estimates needed to prove the local-in-time existence of solutions. We use the Arbitrary Lagrange Euler (ALE) coordinates to fix the domain and obtain careful estimates for the nonlinear Koiter plate, ALE velocity, and pressure {without any viscoelastic smoothing}. For the nonlinear Koiter plate, higher order energy estimates are obtained, whereas estimates for the ALE pressure are obtained by setting up an elliptic problem. For the ALE velocity, the bounds are obtained through div-curl estimates by estimating the ALE vorticity. We then extend our results in two directions: (1) to include fractional Sobolev spaces and (2) to incorporate the normalized second fundamental form.

\section{Introduction}
In this paper, we consider the inviscid free boundary problem that couples the incompressible Euler equation with the nonlinear Koiter plate. The system consists of a fluid channel with a free elastic boundary on top, modeled by the scalar nonlinear Koiter plate equation. The fluid motion is modeled by the incompressible Euler equations defined on a changing domain. 
The interaction between the elastic plate and the fluid flow is 
captured through two mechanisms. The first is the dynamic loading effect on the structure described by the fluid pressure, and the second is the kinematic impermeability condition, which matches the normal velocities of the structure with the normal velocity of the flow.  
The Koiter model, which is used to describe the moving interface, is a nonlinear two-dimensional equation proposed by Koiter to describe the deformations of thin elastic shells, and it was derived under two main assumptions. The first one is that points at the normal to the mid-surface of the shell remain at the normal to the deformed mid-surface after deformation has occurred, and the second is that stress forces within the shell are tangential to the mid-surface. The non-linearity in the model reflects a significant coupling of flexural-longitudinal effects due to large deformations. We consider the case of a nonlinear Koiter-type elastic plate, in which the reference configuration is assumed to be flat. However, the analysis and the results can be extended to the general elastic shell case (non-flat reference configurations). For a rigorous justification of the model, see e.g.~\cite{CR}.

In this paper, our main result is the derivation of  a~priori estimates and the local in-time existence of strong solutions to the model. The a~priori estimates for the whole system are obtained by combining higher-order energy estimates for the nonlinear Koiter plate with pressure and vorticity estimates. It is important that the estimates are independent of the viscoelastic damping imposed on the plate. Local-in-time solutions for the regularized system (with plate viscoelastic damping $\nu >0$) are constructed via a fixed point argument, and solutions for the undamped system are then obtained using the a~priori estimates as the viscoelastic damping parameter $\nu$ goes to zero.

The critical step in obtaining higher-order energy estimates for the nonlinear Koiter plate involves calculating and estimating the elastic operator defined in~\eqref{elastico}. Additionally, we use the incompressible Euler equation, with the boundary conditions, to set up an elliptic equation for the pressure with Robin boundary data. Elliptic regularity then allows us to establish the appropriate estimates for the pressure. Finally, velocity estimates are obtained through div-curl type estimates.

The well-posedness of free boundary problems that couple lower-dimensional structures with viscous fluid has been an active area of research. Some of the first results along these lines were obtained by Desjardins~et~al \cite{CDEG, DEGL}, who showed the existence of weak solutions to a system coupling the  Navier-Stokes equation (NSE) to a strongly damped linear plate. Other existence results on weak solutions for models that couple the NSE to lower-dimensional structures have been obtained in~\cite{G,MC1,MC2, HLN,TW}. The mentioned results on the existence of weak solutions are valid until the possible self-intersection of the domain. Results on strong solutions of two-dimensional fluid interacting with a one-dimensional solid for large initial data have been obtained in~\cite{GH,GHL}. Additionally, a three-dimensional fluid interacting with a three-dimensional elastic body with structural damping has been considered in~\cite{IKLT1,IKLT2}, establishing the global results for small initial data. All the results mentioned above, except \cite{TW}, are for linear structure equations. The theory for nonlinear structures is considerably less developed, with only a few recent findings extending the theory of weak solutions to cases where the structure is modeled as a nonlinear generalized standard material; see e.g.~\cite{BKS}. Systems that model the interaction of viscous flows with a linear and nonlinear Koiter plate have been studied in~\cite{CS,MC3,MC4, L,LR,MS,MRR}. On the other hand, inviscid flow-structure systems have been examined in~\cite{BLW,CLW1,CLW2,W}. These models consider linearized potential flow coupled with a nonlinear structure on a fixed domain. 
The Euler equations coupled with a fourth-order linear plate have been addressed only recently in~\cite{KT1,KT2} and \cite{KNT} for incompressible and compressible cases, respectively.

The paper is organized as follows. In Section~\ref{2}, we begin by describing the model and reformulating the problem using ALE variables. We then state our main result in Theorem~\ref{main}, which provides a~priori bounds for existence. The a~priori estimates are obtained independently of the viscoelastic damping parameter in the plate $\nu >0$. While the construction scheme for the local-in-time solutions uses viscoelastic damping in an essential way, the a~priori estimates allow for the construction of solutions for the undamped system ($\nu=0$).
We end Section~\ref{2} with a description of the elastic operator~$L_K$. In Section~\ref{sec03}, we individually estimate the plate, pressure, and fluid velocity to prove Theorem~\ref{main}.

\section{Mathematical model and main results}\label{2}
\subsection{Model}
We consider a fluid-structure interaction model on an open domain $\Omega(t)\subset \mathbb{R}^3$, which depends on time $t\in [0,T]$, with $T>0$ specified below. The fluid is modeled by the incompressible 3D~Euler equations
  \begin{align}
  \begin{split}
  	& u_{t}+u\cdot \nabla u+\nabla p=0
  	 \\&
	 \nabla\cdot u=0
  	\end{split}
  	 \label{EQ01}
  \end{align}
in $\Omega(t)\times [0,T]=\bigcup_{t\in [0,T]}\Omega(t)$. For simplicity, we consider
$\Omega(0)=\Omega=\mathbb{T}^2\times[0,1]$, where $\mathbb{T}^2=\R^2/\Z^2$ is a two-dimensional torus.
In other words, we consider the 1-periodic boundary conditions in the horizontal directions. 
We denote the upper and lower parts of the boundary of $\Omega(0)$ as 
\begin{equation}\label{boundary}
	\Gamma_0=\mathbb{T}^2\times\{0\}
    \andand
	\Gamma_1=\mathbb{T}^2\times\{1\} 
.
\end{equation}
On $\Gamma_0$, we impose the slip boundary condition
\begin{equation}
	u\cdot N=0.
   \label{EQ04}
\end{equation}
The nonlinear Koiter plate is located along the top boundary, and the fluid domain is given by
\begin{equation}
\Omega(t)=\{(x_1,x_2,x_3): x_3<1+w(x_1,x_2,t)\},
   \llabel{EQ62}
\end{equation}
where $w\colon\Gamma_1\times [0,T]\to\mathbb{R}$ is the vertical displacement of the plate, which satisfies the equation
\begin{equation}\label{plate}
	h w_{tt}+L_Kw  - \nu \Delta w_{t} = p|_{x_{3}=1+w} \inon{ on $\Gamma_1 \times [0,T]$}
	,
\end{equation}
where $h>0$ is the thickness of the nonlinear Koiter plate,
which is considered fixed,
$L_K$ is a nonlinear operator  and is the $L^{2}$ gradient of the Koiter energy described in Section~\ref{elastic} and $\nu\geq0$ is the damping parameter. Note that $p$
is evaluated at $(x_1,x_2,1+w(x_1,x_2,t))$, while $w$ satisfies the initial condition
  \begin{equation*}
	(w,w_t)|_{t=0}=(0,w_1)
	,
  \end{equation*}
where
\begin{equation}\label{EQ31}
\int_{\Gamma_1}w_1=0
  .
\end{equation}
Note that we are mainly interested in the case $\nu=0$; however, the parameter $\nu$
is needed in the construction of solutions.
This analysis applies with minimal changes to general initial data
$w(0) \in H^5$ allowing for more general initial
configuration of the interface. However, we assume $w(0)=0$ for simplicity.
  
We assume that the nonlinear plate evolves with the fluid and we thus have the velocity-matching condition on the interface $\Gamma_1(t)$
\begin{equation}\label{vmatch}
	w_t + u_1\partial_1w + u_2\partial_2w = u_3,
\end{equation}
where $\Gamma_1(t)=\{(x_1,x_2,1+w(x_1,x_2,t):(x_1,x_2)\in\mathbb{T}^2\}$.

\subsection{ALE change of variables}
Denote by $\psi\colon\Omega\to\mathbb{R}$ the harmonic extension of $1+w$ to the domain $\Omega=\Omega(0)$, i.e.,
\begin{alignat}{2}\label{psi}
	\Delta\psi&=0  \qquad &&\text { in } \Omega,
	\\
	\psi(x_1,x_2,1,t)&=1+w(x_1,x_2) \qquad &&\text { on } \Gamma_1
	\\
	\psi(x_1,x_2,0,t)&=0 \qquad &&\text { on } \Gamma_0
	.\label{psii}
\end{alignat} 
Next, we define $\eta\colon\Omega\times[0,T]\to\Omega(t)$ as
\begin{equation}\label{eta}
\eta(x_1,x_2,x_3,t)=(x_1,x_2,\psi(x_1,x_2,x_3,t))\comma(x_1,x_2,x_3)\in\Omega, 
\end{equation}
which represents the ALE change of variables. Note that
\begin{equation}
\nabla\eta=\begin{bmatrix}
1 & 0 & 0\\
0 & 1 & 0\\
\partial_1\psi & \partial_2\psi & \partial_3\psi 
\end{bmatrix}.
\end{equation}
Denote $E=(\nabla\eta)^{-1}$, which is explicitly given by
\begin{equation}\label{edef}
E=\frac{1}{J}F=\begin{bmatrix}
1 & 0 & 0\\
0 & 1 & 0\\
-\partial_1\psi /\partial_3\psi & -\partial_2\psi /\partial_3\psi & 1/\partial_3\psi 
\end{bmatrix},
\end{equation}
where 
\begin{equation}
J=\partial_3\psi
\end{equation}
and 
\begin{equation}\label{fdef}
F=\begin{bmatrix}
\partial_3\psi & 0 & 0\\
0 & \partial_3\psi &0\\
-\partial_1\psi &  -\partial_2\psi & 1
\end{bmatrix}
\end{equation}
is the transpose of the cofactor matrix. Note that $F$ satisfies the Piola identity 
\begin{equation}\label{piola}
\partial_iF_{ij}=0\comma j=1,2,3
\end{equation}
using \cite[page 39]{piola} or a direct verification. Next, denote by
\begin{equation}\label{alevp}
v(x_1,x_2,x_3,t)=u(\eta(x_1,x_2,x_3,t),t)
\andand
q(x_1,x_2,x_3,t)=p(\eta(x_1,x_2,x_3,t),t)
\end{equation}
the ALE velocity and pressure. With this change of variables, the system \eqref{EQ01} becomes
\begin{equation}\label{eulerr}
\begin{split}
&\partial_tv_i+v_1E_{j1}\partial_jv_i+v_2E_{j2}\partial_j v_i+\frac{1}{\partial_3\psi}(v_3-\psi_t)\partial_3v_i+E_{ki}\partial_kq=0\\
&E_{ki}\partial_k v_i=0
\end{split}
\end{equation}
in $\Omega\times[0,T],$ where we used $E_{j3}\partial_jv_i=(1/\partial_3\psi)\partial_3v_i$, for $i=1,2,3$. The initial condition is given by
\begin{equation}
v\big|_{t=0}=v_0.
\end{equation}
On the other hand, the boundary condition on the bottom boundary reads
\begin{equation}\label{vg0}
v_3=0
\onon{\Gamma_0},
\end{equation}
while the condition \eqref{vmatch} may be rewritten as
 \begin{equation}
F_{3i}v_i = w_t
\inon{on $\Gamma_1$}
.
\label{EQ21}
\end{equation}
The plate equation takes the form
\begin{equation}\label{plattee}
h w_{tt}+L_Kw  - \nu \Delta w_{t}  =q \onon{\Gamma_1}
,
\end{equation}
where $q$ is as in~\eqref{alevp}.
As in \cite[Section~4]{KT1}, we have
  \begin{equation}\label{PressureNormalized}
   \int_{\Gamma_1}q=0
   \comma t\in[0,T]
   ,
  \end{equation}
and then from \eqref{plattee}, we get
\begin{equation}\label{EQ03}
\int_{\Gamma_1}w=0
    \comma t\in[0,T]
   ,
\end{equation}
assuming
$\int_{\Gamma_1} w_1=0$.

With the ALE coordinates introduced, all the estimates from this point
on are performed, unless specified otherwise,  on the fixed domain
$\Omega$, with the top and bottom boundaries $\Gamma_1$ and $\Gamma_0$, respectively. \\
 We now state the main result of our paper in the form of the following theorem
that summarizes our a~priori estimates for the local in-time existence for the system~\eqref{eulerr}--\eqref{plattee}.

\cole
\begin{Theorem}\label{main}
Let $ \nu \geq 0$. Assume that
$(v,q,w)$ is a $C^{\infty}([0,T_{0}] \times \Omega)$ solution to the initial boundary value problem \eqref{eulerr}--\eqref{PressureNormalized} on a time interval $[0,T_0]$, where $T_0>0$, such that
\begin{align}
\begin{split}
\Vert v_0\Vert_{H^{3.5}},
\Vert w_1\Vert_{H^{3}(\Gamma_1)}
\leq M
,
\end{split}
\llabel{EQ23}
\end{align}
where $M\geq1$.
Then 
$v$, $w$, and $q$ satisfy
\begin{align} 
\begin{split} 
&\Vert v\Vert_{H^{3.5}}, 
\Vert w\Vert_{H^{5}(\Gamma_1)},
\Vert w_t\Vert_{H^{3}(\Gamma_1)},
\Vert \psi\Vert_{H^{5.5}},
\Vert \psi_t\Vert_{H^{3.5}},
\Vert E\Vert_{H^{4.5}}
\leq C_0M
\comma t\in[0,\min\{T,T_0\}]
\end{split}
\llabel{EQ24}
\end{align}
and
\begin{align} 
\begin{split} 
&\Vert v_t\Vert_{H^{1.5}(\Gamma_1)}, 
\Vert w_{tt}\Vert_{H^{1}(\Gamma_1)}, 
\Vert q\Vert_{H^{2.5}}
\leq K
\comma t\in[0,\min\{T,T_0\}]
,
\end{split}
\llabel{EQ25}
\end{align}
where
$C_0>0$ is a constant that is independent of the parameter $\nu \geq0$, while $K$ and $T$ are constants that depend on~$M$, but not on~$\nu$.
\end{Theorem}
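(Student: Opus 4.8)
The plan is to run a single closed higher-order energy estimate as a continuity argument, arranged so that the damping parameter $\nu$ enters only through a favorable dissipative term; this is what keeps $C_0$ independent of $\nu$. I would work with tangential derivatives $\partial^3$ along $\Gamma_1$ (and their fractional analogues where half-integers are needed) and the energy functional
\[
\mathcal{E}(t)=\frac12\int_\Omega J\,|\partial^3 v|^2+\frac{h}{2}\int_{\Gamma_1}|\partial^3 w_t|^2+\frac12\,\mathcal{Q}(w)[\partial^3 w,\partial^3 w],
\]
where $J=\partial_3\psi$ and $\mathcal{Q}(w)$ is the quadratic form of the second variation of the Koiter energy, with leading part $\norm{\Delta\,\cdot\,}_{L^2(\Gamma_1)}^2$, together with the ALE (pulled-back) vorticity $\omega=\curl_E v$. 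Since $w(0)=0$ (so $\psi(0,\cdot)=x_3$, $J(0)\equiv1$, and the Koiter energy and its variations vanish at $t=0$) and $\omega(0)=\curl v_0$, the hypotheses give $\mathcal{E}(0)+\norm{\omega(0)}_{H^{2.5}(\Omega)}^2\lesssim M^2$; and while $\norm w_{H^5}$ stays small (which it does for short time, as $w(0)=0$ and $\int_{\Gamma_1}w=0$ by \eqref{EQ03}), $\mathcal{Q}(w)$ is coercive, so $\mathcal{E}$ controls $\norm{\partial^3 v}_{L^2(\Omega)}^2+\norm{w_t}_{H^3(\Gamma_1)}^2+\norm{w}_{H^5(\Gamma_1)}^2$. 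The goal is to establish the differential inequality $\frac{d}{dt}\big(\mathcal{E}+\norm{\omega}_{H^{2.5}}^2\big)\le P\big(\mathcal{E}+\norm{\omega}_{H^{2.5}}^2\big)$ with $P$ a fixed polynomial independent of $\nu$; Gr\"onwall and a standard continuity argument then give $\mathcal{E}+\norm{\omega}_{H^{2.5}}^2\le 2CM^2$ on $[0,T]$ with $T=T(M)$, which is the first block of bounds, and the second block will follow from elliptic theory.

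For the energy estimate, the key is a cancellation of the boundary pressure term, which by itself is half a derivative too rough. First I would apply $\partial^3$ to the plate equation \eqref{plattee} and test with $\partial^3 w_t$, obtaining
\[
\frac{d}{dt}\Big(\frac{h}{2}\norm{\partial^3 w_t}^2+\frac12\mathcal{Q}(w)[\partial^3 w,\partial^3 w]\Big)+\nu\norm{\nabla\partial^3 w_t}^2=\int_{\Gamma_1}\partial^3 q\,\partial^3 w_t+R_{\mathrm{pl}},
\]
where the $\nu$-term is nonnegative (hence harmless, and the reason $C_0$ is $\nu$-uniform) and $R_{\mathrm{pl}}$ collects the commutator $[\partial^3,L_K]w$ and the time-derivative-of-coefficient terms. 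Then I would apply $\partial^3$ to the ALE momentum equation in \eqref{eulerr} and test with $J\,\partial^3 v_i$; the only nonroutine contribution is the pressure term $\int_\Omega J\,\partial^3(E_{ki}\partial_k q)\,\partial^3 v_i$. Writing $JE_{ki}=F_{ki}$, integrating by parts in $x_k$, and using the Piola identity \eqref{piola} together with $E_{ki}\partial_k v_i=0$, its interior part reduces to commutators; the boundary part vanishes on $\Gamma_0$ (there $\psi=0$ forces the relevant entries of $F$ to vanish and $v_3=0$), and on $\Gamma_1$ it equals $\int_{\Gamma_1}\partial^3 q\,F_{3i}\partial^3 v_i=\int_{\Gamma_1}\partial^3 q\,\partial^3 w_t$ up to commutators, by the kinematic identity \eqref{EQ21}. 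Adding the two identities, $\int_{\Gamma_1}\partial^3 q\,\partial^3 w_t$ cancels, leaving $\frac{d}{dt}\mathcal{E}+\nu\norm{\nabla\partial^3 w_t}^2\le R$, with $R$ (the transport terms, integrated by parts using that the ALE advection field is tangent to $\partial\Omega$ and divergence-free in the weighted sense; the $\partial_t J$ terms; and all commutators) bounded by $P(\norm v_{H^{3.5}},\norm w_{H^5},\norm{w_t}_{H^3},\norm E_{H^{4.5}},\norm{\psi_t}_{H^{3.5}})$.

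It remains to close the loop, producing $\norm v_{H^{3.5}}$, $\norm q_{H^{2.5}}$, and the auxiliary norms. First, $\psi$ and $\psi_t$ are the harmonic extensions of $1+w$ and $w_t$, so elliptic regularity on the slab gives $\norm\psi_{H^{5.5}}\lesssim\norm w_{H^5}+1$ and $\norm{\psi_t}_{H^{3.5}}\lesssim\norm{w_t}_{H^3}$, and then $\norm E_{H^{4.5}}\lesssim P(\norm\psi_{H^{5.5}})$, using that $H^{4.5}$ is an algebra and $J=\partial_3\psi\ge\frac12$ for short time (as $J(0)\equiv1$ and $\norm{\partial_t J}_{L^\infty}\lesssim\norm{\psi_t}_{H^{3.5}}$). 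Next, $\omega$ satisfies a transport equation along the (tangent, divergence-free) ALE advection field, with a vortex-stretching term of schematic form $\omega\cdot\nabla_E v$ and lower-order terms in $\partial_t E$ and $\nabla\psi_t$; an $H^{2.5}(\Omega)$ energy estimate (fractional Kato--Ponce commutators) gives $\frac{d}{dt}\norm\omega_{H^{2.5}}^2\le P(\norm v_{H^{3.5}},\norm E_{H^{4.5}},\norm{\psi_t}_{H^{3.5}})$. The standard div--curl estimate then gives $\norm v_{H^{3.5}}\lesssim\norm{\div v}_{H^{2.5}}+\norm{\curl v}_{H^{2.5}}+\norm{v\cdot N}_{H^3(\partial\Omega)}+\text{l.o.t.}$; since $\div_E v=0$, both $\div v$ and $\curl v-\curl_E v$ carry a factor $\norm{E-I}_{H^{4.5}}$ which is small for short time (hence absorbed), $\curl_E v=\omega$, and $v\cdot N=v_3$ equals $0$ on $\Gamma_0$ and on $\Gamma_1$ is recovered from $w_t$ and $\nabla\psi$ via \eqref{EQ21}; altogether $\norm v_{H^{3.5}}\lesssim\norm\omega_{H^{2.5}}+\norm{w_t}_{H^3(\Gamma_1)}+\text{l.o.t.}$ For the pressure, applying $E_{ki}\partial_k$ to the ALE momentum equation and using $E_{ki}\partial_k v_i=0$ yields a uniformly elliptic equation $\mathcal{L}_E q=(\partial_t E)\!:\!\nabla v-E_{ki}\partial_k(\mathrm{adv})_i-E_{ki}(\partial_k E_{li})\partial_l q$ with right-hand side in $H^{1.5}(\Omega)$ after moving lower-order terms; on $\Gamma_0$ its conormal derivative vanishes ($\partial_3 q=0$ there, since $v_3$ and $\psi_t$ vanish on $\Gamma_0$), while on $\Gamma_1$, substituting $w_{tt}=\partial_t(F_{3i}v_i)$ and $\partial_t v_i=-(\mathrm{adv})_i-E_{ki}\partial_k q$ into \eqref{plattee} gives the oblique Robin-type condition $q+hF_{3i}E_{ki}\partial_k q=h(\partial_t F_{3i})v_i-hF_{3i}(\mathrm{adv})_i+L_K w-\nu\Delta w_t$, whose data lies in $H^1(\Gamma_1)$ (using $w\in H^5$, $w_t\in H^3$, $v|_{\Gamma_1}\in H^3$, $\nabla\psi_t|_{\Gamma_1}\in H^2$). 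Elliptic regularity for this oblique problem — the obliqueness being a small, short-time perturbation of the conormal derivative, with $\int_{\Gamma_1}q=0$ fixing the constant — gives $\norm q_{H^{2.5}}\le K(M)$; then $w_{tt}=h^{-1}(q-L_K w+\nu\Delta w_t)$ gives $\norm{w_{tt}}_{H^1(\Gamma_1)}\le K$, and $v_t=-(\mathrm{adv})-E\nabla q$ gives $\norm{v_t}_{H^{1.5}(\Gamma_1)}\le K$.

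Assembling the previous two paragraphs — re-expressing $\norm v_{H^{3.5}}$ via the div--curl bound and $\norm E_{H^{4.5}},\norm{\psi_t}_{H^{3.5}}$ via the elliptic bounds, and absorbing the $\norm{E-I}$-small terms — gives $\frac{d}{dt}(\mathcal{E}+\norm\omega_{H^{2.5}}^2)\le P(\mathcal{E}+\norm\omega_{H^{2.5}}^2)$ with $P$ independent of $\nu$, and the continuity argument closes as in the first paragraph. \textbf{The main obstacle} will be the nonlinear Koiter operator $L_K$ of \eqref{elastico}. One must (i) compute it explicitly as the $L^2(\Gamma_1)$-gradient of the Koiter energy and verify that, above the constant-coefficient bending term $\sim\Delta^2 w$, its nonlinear (membrane and curvature) part is quasilinear of order at most two in $w$, so that $\partial^3(L_K w)$ has no uncontrolled top-order contribution when paired with $\partial^3 w_t\in L^2$; (ii) prove the coercivity of $\mathcal{Q}(w)$, i.e. that the bending term controls $\norm w_{H^5}^2$ and dominates the sign-indefinite membrane and curvature corrections once $\norm w_{H^5}$ is small; and (iii) bound the commutator $[\partial^3,L_K]w$ and the coefficient-time-derivative terms $\langle(\partial_t\mathcal{Q}(w))[\partial^3 w],\partial^3 w\rangle$ against $\partial^3 w_t$ by $P(\norm w_{H^5},\norm{w_t}_{H^3})$. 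The remaining work — the oblique elliptic estimate for $q$, the $E$-twisted div--curl, and the half-integer fractional-commutator bookkeeping — is routine but lengthy; also, for the damping term $\nu\Delta w_t$ in the pressure condition one either keeps $\nu$ bounded (the relevant regime being $\nu\downarrow0$) or uses the dissipation in the plate estimate.
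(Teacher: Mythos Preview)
Your proposal is correct and matches the paper's approach: tangential plate-plus-fluid energy with the boundary pressure cancellation, ALE vorticity transport plus div--curl to recover $\|v\|_{H^{3.5}}$, and the Robin elliptic problem for~$q$. The paper's execution differs only in two technical details --- it uses the asymmetric fluid form $\tfrac12\int J\,\Lambda^{2.5}v_i\,\Lambda^{3.5}v_i$ (subcritical in $\|v\|_{H^{3.5}}$, present solely to effect the cancellation) rather than your $\tfrac12\int J|\partial^3 v|^2$, and it handles the fractional $H^{2.5}$ vorticity estimate by extending the transport equation to $\mathbb{T}^2\times\mathbb{R}$ via Sobolev extension of the coefficients, sidestepping the normal-direction integration-by-parts issue with $\Lambda_3^{2.5}$ on the slab.
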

\colb

\subsection{The elastic operator}\label{elastic}
In this section, we derive an expression for the Koiter energy of the nonlinear Koiter plate by following~\cite{CR}, where this model is justified. For the rest of this paper, Greek indices and exponents take values in the set $\{1,2\}$ and Latin indices and exponents take values in the set $\{1,2,3\}$. 
The tangential vectors of the deformed configuration are given by
  \begin{equation*}
    a_\alpha(w)=\partial_\alpha\eta|_{x_3=1}= e_\alpha+n\partial_\alpha w
    \comma \alpha=1,2
    ,
  \end{equation*}
where $e_1=(1,0,0)$ and $e_2=(0,1,0)$ represent the unit tangential
vectors of the undeformed boundary~$\Gamma_1$
and $a_3=n=(0,0,1)$ is the unit normal vector to~$\Gamma_1$.
The components of the first fundamental form of the deformed configuration read
  \begin{equation*}
   a_{\alpha\beta}(w)
     =a_\alpha(w)\cdot a_\beta(w)
     =\delta_{\alpha\beta}+\partial_\alpha w\partial_\beta w
    ,
  \end{equation*}
where it is understood that $\alpha,\beta=1,2$.
We define the change of the metric tensor $G(w)$ as
\begin{equation*}
G_{\alpha\beta}(w)=a_{\alpha\beta}(w)-e_{\alpha}\cdot e_{\beta}=\partial_\alpha w\partial_\beta w.
\end{equation*}
The non-normalized normal vector to the deformed configuration is given by
  \begin{equation*}
  n(w)=a_1(w)\times a_2(w)=n-e_1\partial_1 w-e_2\partial_2 w
   ,
  \end{equation*}
which then leads to the definition \begin{equation}
\displaystyle a_3(w)=\frac{n(w)}{|n(w)|}.
   \llabel{EQ63}
\end{equation}
As in \cite{MS}, we define the tensor 
  \begin{equation}\label{curve}
   R_{\alpha\beta}=\frac{1}{|a_1\times a_2|}\partial_\alpha a_\beta(w)\cdot n(w)-\partial_\alpha a_\beta\cdot n=\partial^2_{\alpha\beta}w,
  \end{equation}
representing a non-normalized variant of the second fundamental form measuring a change in curvature;
here, $\partial_\alpha a_\beta$ represents the covariant components of the curvature tensor of the reference configuration.
Also,  in \eqref{curve}, $\partial_{\alpha}a_{\beta}=\partial_{\alpha}a_{\beta}(0)$
and $|a_1\times a_2|=|a_1(0)\times a_2(0)|=|e_1\times e_2|$ are understood.
For a treatment of the normalized version of the tensor~$R$, see Section~\ref{sec04}.

Finally, we define the elasticity tensor
  \begin{equation*}
   \mathcal{A}E=\frac{4\lambda\mu}{\lambda+2\mu}(A:E)A+4\mu AEA
   \comma E\in\Sym\mathbb{R}^{2\times 2}
   ,
  \end{equation*}
where
$A:E=\Tr(A E^{T})$.
Here, $A$ is the contravariant metric tensor associated to $\Gamma_1$
and $\lambda, \mu>0$  are the Lam\'{e} constants; the components of
the matrix $A$ are given by
  \begin{equation}\llabel{contra}
   a^{\alpha\beta}=a^{\alpha}\cdot a^{\beta},
  \end{equation}
where the three vectors $a^i$ are defined by the relation
$a_j\cdot a^i=\delta_{ij}$, for $i,j=1,2,3$,
and $a^3=a_3=n$.
For our geometry, we obtain $\displaystyle a^i=a_i$ for $i=1,2,3$.
We then define the Koiter energy $\mathcal{E}_{K}$ as 
  \begin{equation}\label{energy}
  \begin{split}
   \mathcal{E}_{K} &=\frac{h}{4}\int_{\Gamma_1}\mathcal{A}G(w(t,\cdot)): G(w(t,\cdot)) + \frac{h^3}{48}\int_{\Gamma_1}\mathcal{A}R(w(t,\cdot)): R(w(t,\cdot))  \\
   & =\frac{h}{4}\int_{\Gamma_1}a^{\alpha\beta\sigma\tau}G_{\alpha\beta}(w(t,\cdot)) G_{\sigma\tau}(w(t,\cdot))
   	+ \frac{h^3}{48}\int_{\Gamma_1}a^{\alpha\beta\sigma\tau}R_{\alpha\beta}(w(t,\cdot)) R_{\sigma\tau}(w(t,\cdot)).
  \end{split}
  \end{equation}
 In order to simplify the notation, we introduce bilinear forms connected to the membrane and bending effects in the variational formulation as
  \begin{equation}\label{elastics}
  \begin{split}
   \am(t,w,\xi)&=\frac{h}{2}\int_{\Gamma_1}\mathcal{A}G(w(t,\cdot)):\left(G'(w(t,\cdot))\xi \right)=\frac{h}{2}\int_{\Gamma_1}a^{\alpha\beta\sigma\tau}G_{\alpha\beta}(w(t,\cdot))\left(G'_{\sigma\tau}(w(t,\cdot))\xi\right)\\
   \ab(t,w,\xi)&=\frac{h^3}{24}\int_{\Gamma_1}\mathcal{A}R(w(t,\cdot)):\left(R'(w(t,\cdot))\xi\right) =\frac{h^3}{24}\int_{\Gamma_1}a^{\alpha\beta\sigma\tau}R_{\alpha\beta}(w(t,\cdot))\left(R'_{\sigma\tau}(w(t,\cdot))\xi\right),
  \end{split}
  \end{equation}
respectively,
for $\xi\in H^2(\Gamma)$; here, $G'$ and $R'$ denote the Fr\'{e}chet derivatives of $G$ and $R$ respectively, and are given by
  \begin{align}
   \begin{split}
   &
   (R'(w)\xi)_{\alpha\beta}=\partial_{\alpha\beta}\xi
   ,
   \\&
   (G'(w)\xi)_{\alpha\beta}=\partial_{\alpha}\xi\partial_\beta w+\partial_{\alpha}w\partial_\beta \xi
   ,
  \end{split}
   \label{frechet}
  \end{align}
and where $\alpha,\beta,\sigma,\tau$ range over~$1,2$.
Also, the tensor $a^{\alpha\beta\sigma\tau}$ is defined as
\begin{equation}\label{lame}
\begin{split}
a^{\alpha\beta\sigma\tau}&=a^{\alpha\beta}a^{\sigma\tau}\frac{4\lambda\mu}{\lambda+2\mu}+4\mu a^{\alpha\sigma}a^{\beta\tau}=\delta_{\alpha\beta}\delta_{\sigma\tau}\frac{4\lambda\mu}{\lambda+2\mu}+4\mu \delta_{\alpha\sigma}\delta_{\beta\tau}.
\end{split}
\end{equation}
The elastodynamics of the plate is given by the variational formulation
\begin{equation}
\begin{split}
h\rho_s\frac{d}{dt}\int_{\Gamma_1}\partial_tw(t,\dot)\xi dy+\am(t,w,\xi)+\ab(t,w,\xi)
  +\nu\int_{\Gamma_1}\nabla w_t\cdot\nabla\xi
=\int_{\Gamma_1}g\xi dy
    \comma \xi\in H^{2}(\Gamma_1)
\end{split}
\end{equation}
on the time interval $(0,T)$,
where $\rho_s$ is the structure density, $g$ is the density of area force acting on the structure. 
We denote the elasticity operator by $L_K$, formally given by
  \begin{equation}\label{elastico}
   \langle L_Kw,\xi\rangle=\am(t,w,\xi)+\ab(t,w,\xi)
   ,
  \end{equation}
where $\langle \cdot, \cdot\rangle$ denotes the duality pairing with respect to~$L^{2}$.

To obtain an explicit expression for $L_K$ for our particular configuration, we first rewrite
  \begin{equation}\label{needam}
  \begin{split}
  \am(t,w,\xi)&=\frac{ha^{\alpha\beta\sigma\tau}}{2}\left[\int_{\Gamma_1}\partial_\alpha w\partial_\beta w\partial_\sigma \xi\partial_\tau w+\int_{\Gamma_1}\partial_\alpha w\partial_\beta w\partial_\sigma w\partial_\tau \xi\right]\\
  &=-\frac{ha^{\alpha\beta\sigma\tau}}{2}\left[\int_{\Gamma_1}\xi\left(\partial_\sigma\left(\partial_\alpha w\partial_\beta w\partial_\tau w\right)\right) +\int_{\Gamma_1}\xi\left(\partial_\tau\left(\partial_\alpha w\partial_\beta w\partial_\sigma w \right)\right)\right]\\
  &=(L_{\text{m}}w,\xi),
  \end{split}
 \end{equation}
 where
 \begin{equation}
 L_{\text{m}}w=-\frac{ha^{\alpha\beta\sigma\tau}}{2}\left(\partial_\sigma\left(\partial_\alpha w\partial_\beta w\partial_\tau w\right)+\partial_\tau\left(\partial_\alpha w\partial_\beta w\partial_\sigma w \right)\right).
 \end{equation}
Similarly,
 \begin{equation}\label{needab}
 \begin{split}
  \ab(t,w,\xi)
  &=\frac{h^3a^{\alpha\beta\sigma\tau}}{24}\int_{\Gamma_1}\partial^2_{\alpha\beta}w\partial^2_{\sigma\tau}\xi
  =\frac{h^3a^{\alpha\beta\sigma\tau}}{24}\int_{\Gamma_1}\xi\partial^4_{\alpha\beta\sigma\tau}w
  =(L_{\text{b}}w,\xi),
  \end{split}
 \end{equation}
 where
 \begin{equation}
 L_{\text{b}}w=\frac{h^3a^{\alpha\beta\sigma\tau}}{24}\partial^4_{\alpha\beta\sigma\tau}w.
 \end{equation}
Therefore, combining \eqref{elastico}, \eqref{needam}, and \eqref{needab}, we obtain
 \begin{align}
   \begin{split}
 L_Kw&=L_{\text{m}}w+L_{\text{b}}w
  \\&
 =-\frac{ha^{\alpha\beta\sigma\tau}}{2}\left(\partial_\sigma\left(\partial_\alpha w\partial_\beta w\partial_\tau w\right)+\partial_\tau\left(\partial_\alpha w\partial_\beta w\partial_\sigma w \right)\right)+\frac{h^3a^{\alpha\beta\sigma\tau}}{24}\partial^4_{\alpha\beta\sigma\tau}w.
  \end{split}
  \label{lkexplicit}
 \end{align}
 \colb

\section{A~priori estimates}\label{sec03}
In this section, we provide a~priori estimates for $v$, $q$, and $w$
in appropriate Sobolev spaces and
prove Theorem~\ref{main}. We begin by deriving preliminary estimates on $E$, $F$, and~$J$.

\cole
\begin{Lemma}
\label{L01}
Let $\epsilon\in(0,1/2]$, and assume that
\begin{align}
\begin{split}
&
\Vert v\Vert_{H^{3.5}},
\Vert w\Vert_{H^{5}(\Gamma_1)},
\Vert w_t\Vert_{H^{3}(\Gamma_1)}
\leq C_0M
\comma t\in[0,T_0]
,
\end{split}
\llabel{EQ33}
\end{align}
where $M\geq1$ is as in the statement of Theorem~\ref{main}. Then
we have
\begin{equation}\label{matest}
\Vert E-I\Vert_{H^{2.5}},
\Vert F-I\Vert_{H^{2.5}},
\Vert J-1\Vert_{H^{2.5}},
\Vert J-1\Vert_{L^{\infty}} 
\leq \epsilon
\comma t\in [0,T_0]
,
\end{equation}
where 
\begin{equation}
0\leq T_0\leq\frac{\epsilon}{C M^{3}}
,
\llabel{EQ36}
\end{equation}
and $C$ is a sufficiently large constant depending on~$C_0$.
\end{Lemma}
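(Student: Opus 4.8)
The plan is to exploit that, since $w(0)=0$, the boundary data in \eqref{psi}--\eqref{psii} at $t=0$ are $1$ on $\Gamma_1$ and $0$ on $\Gamma_0$, so that $\psi(\cdot,0)=x_3$ and hence $F=E=I$ and $J=1$ at $t=0$; the lemma is then a ``smallness persists for a short time'' statement. I would combine three ingredients: elliptic regularity for the harmonic extension \eqref{psi}--\eqref{psii} to control $\psi$ by $w$; differentiation of that same problem in $t$ on the \emph{fixed} domain $\Omega$ to control $\psi_t$ by $w_t$; and the fundamental theorem of calculus in time, together with Sobolev product and composition estimates, to pass from $\psi$ to $F$, $J$, and $E$. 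The hypothesis on $\|v\|_{H^{3.5}}$ plays no role, since $F$, $J$, $E$ depend only on $\psi$, hence only on $w$.

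First I would record the elliptic bounds. Since $\psi-x_3$ solves the Dirichlet problem with data $w$ on $\Gamma_1$ and $0$ on $\Gamma_0$, elliptic regularity on the smooth (corner-free) domain $\Omega=\mathbb{T}^2\times[0,1]$ gives $\|\psi-x_3\|_{H^{s+1/2}(\Omega)}\lesssim\|w\|_{H^{s}(\Gamma_1)}$; differentiating \eqref{psi}--\eqref{psii} in $t$ (legitimate because $\Omega$ is independent of $t$), $\psi_t$ is the harmonic extension of $(w_t|_{\Gamma_1},0)$, so $\|\psi_t\|_{H^{s+1/2}(\Omega)}\lesssim\|w_t\|_{H^{s}(\Gamma_1)}$, and in particular $\|\psi_t\|_{H^{3.5}(\Omega)}\lesssim C_0M$ under the hypotheses. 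Next, each entry of $F-I$ (see \eqref{fdef}) and of $J-1$ is a first-order spatial derivative of $\psi-x_3$, and these quantities vanish at $t=0$; hence for $t\in[0,T_0]$,
\begin{equation*}
\|F(t)-I\|_{H^{2.5}}+\|J(t)-1\|_{H^{2.5}}\le\int_0^t\big(\|\partial_sF\|_{H^{2.5}}+\|\partial_sJ\|_{H^{2.5}}\big)\,ds\lesssim\int_0^t\|\psi_s\|_{H^{3.5}}\,ds\lesssim t\,C_0M.
\end{equation*}
Taking $T_0\le\epsilon/(CM^3)$ with $C$ large (depending on $C_0$) makes the right-hand side at most $\min\{\epsilon,\tfrac12\}$, which gives the $H^{2.5}$ bounds for $F-I$ and $J-1$, and then $\|J-1\|_{L^\infty}\le\epsilon$ follows from $H^{2.5}(\Omega)\hookrightarrow L^\infty(\Omega)$. (A denominator linear in $M$ would already suffice for this lemma; keeping $M^3$ is harmless.)

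For $E$ I would write $E-I=J^{-1}(F-JI)$ using \eqref{edef}; since $F_{11}=F_{22}=J$ and $F_{33}=1$, the matrix $F-JI$ again has entries that are first-order derivatives of $\psi-x_3$, so $\|F-JI\|_{H^{2.5}}\lesssim t\,C_0M$ as above. Once $\|J-1\|_{H^{2.5}}\le\tfrac12$ one has $J\ge\tfrac12$ pointwise, and since $H^{2.5}(\Omega)$ is a Banach algebra ($2.5>3/2$), the Neumann series for $J^{-1}=(1+(J-1))^{-1}$ converges there with $\|J^{-1}\|_{H^{2.5}}\le C$; hence $\|E-I\|_{H^{2.5}}\le C\|F-JI\|_{H^{2.5}}\lesssim t\,C_0M\le\epsilon$ for $T_0$ as above.

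The only point that needs care — and the mild obstacle here — is the apparent circularity in the $E$-estimate: the bound on $J^{-1}$ presupposes that $J$ is already close to $1$. This is resolved because the smallness of $J-1$ is obtained first, from the $F,J$ estimates, and because all the quantities involved are $C^\infty$ in $t$; if one prefers to avoid even the appearance of circularity, one runs a continuity argument, letting $T_*\le T_0$ be the maximal time on which $\|J-1\|_{H^{2.5}}\le\tfrac12$ and checking that the displayed estimates strictly improve this threshold whenever $T_0\le\epsilon/(CM^3)$, so that $T_*=T_0$. Everything else reduces to routine Sobolev product estimates together with the elliptic bounds above, so I do not anticipate further difficulty.
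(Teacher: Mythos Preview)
Your argument is correct. The route differs from the paper's mainly in the order of operations and in how $E-I$ is handled. The paper first controls $E-I$ by differentiating $E=(\nabla\eta)^{-1}$ to get the matrix ODE $E_t=-E\,\nabla\eta_t\,E$, rewrites $\partial_t(E-I)$ as a sum of terms quadratic and linear in $E-I$ plus $\nabla\eta_t$, and closes with a barrier argument on $y=\Vert E-I\Vert_{H^{2.5}}$; it then bounds $J-1$ by the fundamental theorem of calculus exactly as you do, and finally gets $F-I$ from $F=JE$. You instead exploit the explicit structure of $F$ and $J$ in \eqref{fdef} to bound $F-I$ and $J-1$ first, both via the fundamental theorem of calculus, and then recover $E-I=J^{-1}(F-JI)$ algebraically once $J$ is known to be close to~$1$. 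Your approach is slightly more elementary here because the ALE map \eqref{eta} is so explicit; the paper's ODE/barrier argument is the standard device that would survive in settings where $E$ has no closed form. Both routes need only $T_0\le \epsilon/(CM)$, so your remark that the $M^3$ denominator is harmless overkill matches the paper's actual computation.
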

\colb

Before the proof, note that by the definitions of 
$\psi$ and
$\eta$ in \eqref{psi}--\eqref{psii}
and \eqref{eta}
we have
  \begin{align}
   \begin{split}
    \Vert \eta\Vert_{H^{5.5}}
    \lec
    \Vert \psi\Vert_{H^{5.5}}
    \lec
    \Vert w\Vert_{H^{5}(\Gamma_1)}
   \end{split}
   \label{EQ42}
  \end{align}
and
  \begin{align}
   \begin{split}
    \Vert \eta_t\Vert_{H^{3.5}}
    \lec
    \Vert \psi_t\Vert_{H^{3.5}}
    \lec
    \Vert w_t\Vert_{H^{3}(\Gamma_1)}
    ,
   \end{split}
   \label{EQ43}
  \end{align}
and both far right sides are bounded by constant multiples of~$M$.

Also, we have
  \begin{align}
   \begin{split}
   \Vert J\Vert_{H^{4.5}}
   =
   \Vert \partial_{3}\psi\Vert_{H^{4.5}}
   \lec
   \Vert \psi\Vert_{H^{5.5}}
   \lec
   \Vert w\Vert_{H^{5}(\Gamma_1)}
   \end{split}
   \label{EQ44}
  \end{align}
and
  \begin{equation}
   \Vert J_t\Vert_{H^{2.5}}
   \lec
   \Vert \psi_t\Vert_{H^{3.5}}
   \lec
   \Vert \eta_t\Vert_{H^{3.5}}
   \lec
   \Vert w_t\Vert_{H^{3}(\Gamma_1)}
   ,
   \label{EQ45}
  \end{equation}
with both right sides bounded by a constant multiple of~$M$.

\begin{proof}[Proof of Lemma~\ref{L01}]
Since $E=(\nabla \eta)^{-1}$, we have
  \begin{equation}
   E_t = - E\nabla \eta_t E
   ,
   \label{EQ32}
  \end{equation}
where the right-hand side is understood as a product of three matrices. Next, we observe
  \begin{align}
  \begin{split}
   \partial_{t}(E-I)
     &=
     (E-I)\nabla \eta_t (E-I)
     +      (E-I)\nabla \eta_t
     +      \nabla \eta_t (E-I)
     +      \nabla \eta_t
     ,
  \end{split}
   \label{EQ101}
  \end{align}
from where we obtain that
the function $y=\Vert E-I\Vert_{H^{2.5}}$
satisfies
  \begin{align}
  \begin{split}
   y &
   \lec
   \int_{0}^{t} (y^2+1) M \,ds
   ,
  \end{split}
   \label{EQ125}
  \end{align}
where we used~\eqref{EQ43}. Using a barrier argument,
we get $y\lec \epsilon$ provided
$T\leq \epsilon/ C M$,
and the bound on the first term in
\eqref{matest} is established.
Similarly,
we have
  \begin{align}
   \begin{split}
   \Vert J-1\Vert_{H^{2.5}}
   \lec 
   \left\Vert \int_{0}^{t} J_t\,ds\right\Vert_{H^{2.5}}
   \lec
   M T
   .
   \end{split}
   \llabel{EQ47}
  \end{align}
The bound 
for     $\Vert F-I\Vert_{H^{2.5}}$
follows immediately from those on
$\Vert E-I\Vert_{H^{2.5}}$
and $    \Vert J-1\Vert_{H^{2.5}}$
by using $F = J E $.
\end{proof}

Note that the value of $\epsilon$ in Lemma~\ref{L01} is fixed in the pressure
estimates and then further restricted in the conclusion of a~priori bounds.

Also, by the definitions of $E$ and $F$ in \eqref{edef} and \eqref{fdef},
respectively,
we have
  \begin{equation}
   \Vert E\Vert_{H^{4.5}},
   \Vert F\Vert_{H^{4.5}}
   \leq
   P(\Vert w\Vert_{H^5(\Gamma_1)})
   \label{EQ48}
  \end{equation}
and
  \begin{equation}
   \Vert E_t\Vert_{H^{2.5}},
   \Vert F_t\Vert_{H^{2.5}}
   \leq
    P(\Vert w\Vert_{H^{5}(\Gamma_1)},
      \Vert w_t\Vert_{H^{3}(\Gamma_1)}
    )
   .
   \label{EQ49}
  \end{equation}
Above and in the sequel, the symbol $P$ denotes a generic polynomial of its arguments.
It is assumed to be nonnegative and is allowed to change from inequality to inequality.

\colb

\subsection{The plate estimates}\label{platesec}
We now proceed to derive estimates for the plate. For $f\in C^{m}(\mathbb{R}^2)$, $\gamma=(\gamma_1,\gamma_2)\in \mathbb{N}_0^2$, and $|\gamma|=\sum_{i=1}^2\gamma_i$, denote by $\partial^{\gamma}f$ the partial derivative of order $\gamma$ of $f$, which is defined as
$
\partial^{\gamma}f=\partial_{x_1}^{\gamma_1}\partial_{x_2}^{\gamma_2}f
$.
With $\gamma\in \mathbb{N}_0^3$, where $|\gamma|=3$, we test the plate equation \eqref{plate} with $-\partial^{2\gamma} w_t$, obtaining
\begin{equation}\label{plateen}
h(\partial^\gamma w_{tt},\partial^\gamma w_t)-(L_Kw,\partial^{2\gamma}w_t)
 +\nu(\partial^{\gamma}\nabla w_t\cdot \partial^{\gamma}\nabla w)
=-(\partial^{2\gamma}q,w_t)
.
\end{equation}
We now take a closer look at the term $(L_Kw,\partial^{2\gamma}w_t)$. Using \eqref{elastics} and \eqref{elastico}, we write
 \begin{equation}\label{d6}
 	\begin{split}
   		-(L_Kw,\partial^{2\gamma}w_t)&=-\frac{h}{2}\int_{\Gamma_1} \mathcal{A}G(w):G'(w)\partial^{2\gamma} w_t-\frac{h^3}{24}\int_{\Gamma_1}\mathcal {A}R(w):R'(w)\partial^{2\gamma}w_t\\
	  	&=-\frac{h}{2}\int_{\Gamma_1}a^{\alpha\beta\sigma\tau}G_{\alpha\beta}(w)(G'(w)\partial^{2\gamma}w_t)_{\sigma\tau}
  		\\&\indeq
  		-\frac{h^3}{24}\int_{\Gamma_1}a^{\alpha\beta\sigma\tau}R_{\alpha\beta}(w)(R'(w)\partial^{2\gamma}w_t))_{\sigma\tau}.
 \end{split}
 \end{equation}
We handle each of the above terms below. Using \eqref{frechet} and integrating by parts, we obtain
  \begin{equation}\label{g}
	\begin{split}
		&-\frac{h}{2}\int_{\Gamma_1}a^{\alpha\beta\sigma\tau}G_{\alpha\beta}(w)(G'(w)\partial^{2\gamma}w_t)_{\sigma\tau}
		\\&\indeq
		= -\frac{a^{\alpha\beta\sigma\tau}h}{2}\int_{\Gamma_1}\partial_\alpha w\partial_\beta w(\partial_\sigma \partial^{2\gamma}w_t)\partial_\tau w-\frac{a^{\alpha\beta\sigma\tau}h}{2}\int_{\Gamma_1}\partial_\alpha w\partial_\beta w\partial_\sigma w\partial_\tau \partial^{2\gamma}w_t
		\\&\indeq
		\lec a^{\alpha\beta\sigma\tau}h\left(\int_{\Gamma_1}|D^4\partial_\alpha w| |\partial_\beta w\partial_\sigma w| |D^2\partial_\tau w_t|+\int_{\Gamma_1}|D^3\partial_\alpha w| |D\partial_\beta w| |\partial_\sigma w ||D^2\partial_\tau w_t|\right)\\&\indeq\indeq
		+a^{\alpha\beta\sigma\tau}h\int_{\Gamma_1}|D^2\partial_\alpha w| |D^2\partial_\beta w| |\partial_\sigma w| |D^2\partial_\tau w_t|
		\\&\indeq\indeq
		+a^{\alpha\beta\sigma\tau}h\int_{\Gamma_1}|D^2\partial_\alpha w| |D\partial_\beta w| |D\partial_\sigma w| |D^2\partial_\tau w_t|
		\\&\indeq
		\leq P(\|w\|_{H^5(\Gamma_1)},\|w_t\|_{H^3(\Gamma_1)})
		,
	\end{split}
\end{equation}
where $P$ is a polynomial that may vary line to line. Denoting
$\partial_{\alpha}=\partial_{x_{\alpha}}$,
we use \eqref{frechet} and \eqref{lame} to get
  \begin{equation}\label{r}
  \begin{split}
  &-\frac{h^3a^{\alpha\beta\sigma\tau}}{24}\int_{\Gamma_1}R_{\alpha\beta}(w)(R'(w)\partial^{2\gamma}w_t)_{\sigma\tau}=-\frac{h^3a^{\alpha\beta\sigma\tau}}{24}\int_{\Gamma_1}\partial_{\alpha\beta}w \partial^{2\gamma}\partial_{\sigma\tau}w_t\\&\indeq
  =\frac{h^3a^{\alpha\beta\sigma\tau}}{24}\int_{\Gamma_1}\left(\partial^\gamma\partial_{\alpha\beta}w\right)\left(\partial^\gamma\partial_{\sigma\tau}w_t\right)\\&\indeq
  =\frac{4\lambda\mu h^3}{24(\lambda+2\mu)}\delta_{\alpha\beta}\delta_{\sigma\tau}\int_{\Gamma_1}\left(\partial^\gamma\partial_{\alpha\beta}w\right)\left(\partial^\gamma\partial_{\sigma\tau}w_t\right)
  \\&\indeq\indeq
+\frac{4\mu h^3}{24}\delta_{\alpha\sigma}\delta_{\beta\tau}\int_{\Gamma_1}\left(\partial^\gamma\partial_{\alpha\beta}w\right)\left(\partial^\gamma\partial_{\sigma\tau}w_t\right)\\&\indeq
  =\frac{4\lambda\mu h^3}{24(\lambda+2\mu)}\int_{\Gamma_1}\left(\partial^\gamma\partial_{\alpha\alpha}w\right)\left(\partial^\gamma\partial_{\sigma\sigma}w_t\right)+\frac{4\mu h^3}{24}\int_{\Gamma_1}\left(\partial^\gamma\partial_{\alpha\beta}w\right)\left(\partial^\gamma\partial_{\alpha\beta}w_t\right)
  \\&\indeq
  =\frac{1}{2}\frac{d}{dt}\biggl(\frac{4\lambda\mu h^3}{24(\lambda+2\mu)}\|\partial^\gamma\partial_{\alpha\alpha}w\|^2_{L^2(\Gamma_1)} + \frac{4\mu h^3}{24}\sum_{\alpha,\beta=1}^2\|\partial^\gamma\partial_{\alpha\beta}w\|^2_{L^2(\Gamma_1)}\biggr).
  \end{split}
  \end{equation}
Now, inserting the estimates from  \eqref{d6}, \eqref{g}, and \eqref{r} into \eqref{plateen}, we have
 \begin{equation}\label{platestp}
 \begin{split}
  &
  \frac{1}{2}\frac{d}{dt}\Bigg(h\|D^3w_t\|^2_{L^2(\Gamma_1)}
     +\frac{4\lambda\mu h^3}{24(\lambda+2\mu)}\|D^3\partial_{\alpha\alpha}w\|^2_{L^2(\Gamma_1)}
     + \frac{4\mu h^3}{24}\sum_{\alpha,\beta=1}^2\|D^3\partial_{\alpha\beta}w\|^2_{L^2(\Gamma_1)}\Bigg)
   \\&\indeq\indeq
     + \nu \Vert D^4 w_t \Vert^{2}_{L^2(\Gamma_1)}
     \\&\indeq
     \leq-(D^6q,w_t)+P(\|w\|_{H^5(\Gamma_1)},\|w_t\|_{H^3(\Gamma_1)})
     .
\end{split}
\end{equation}

Next, we eliminate the higher order pressure term. To do so, we need to proceed as in \cite[Lemma~3.2]{KT1}. Rather than repeating the entire proof, we summarize the strategy below.
\begin{enumerate}
\item Our aim is to prove the inequality
 \begin{align}
   \begin{split}
   &
   \frac12
   \int  J \Lambda^{2.5} v_i \Lambda^{3.5} v_i \Big|_{t}
   -
   \frac12
   \int  J \Lambda^{2.5} v_i \Lambda^{3.5} v_i \Big|_{0}
   \\&\indeq
   \leq
   - 
   \int_{0}^{t}
   \int_{\Gamma_1} q\Lambda^{6}w_{t}\,d\sigma \,ds
   + 
   \int_{0}^{t}
        P(
          \Vert v\Vert_{H^{3.5}},
          \Vert q\Vert_{H^{2.5}},
          \Vert w\Vert_{H^{5}(\Gamma_1)},
          \Vert w_{t}\Vert_{H^{3}(\Gamma_1)}
         )\,ds
   ,
   \end{split}
   \label{EQ326}
  \end{align}
where 
\begin{equation}\label{l2}
	\Lambda=(I-\Delta_2)^{1/2},
   \llabel{EQ09}
\end{equation}
and $\Delta_2$ denotes the Laplacian in $x_1$ and $x_2$ variables with the periodic boundary conditions. After \eqref{EQ326} is established, it is simply added to~\eqref{platestp}.
This leads to a cancellation of the higher order pressure term.
\item After the cancellation, using \eqref{matest}, the first term on the left hand side of \eqref{EQ326} is estimated as
 \begin{align}
   \begin{split}
    - \int  J \Lambda^{2.5} v_i \Lambda^{3.5} v_i
    &
    \lec
    \Vert J\Vert_{L^\infty}
    \Vert \Lambda^{2.5}v\Vert_{L^2}
    \Vert \Lambda^{3.5}v\Vert_{L^2}
    \lec
    \Vert \Lambda^{2.5}v\Vert_{L^2}
    \Vert \Lambda^{3.5}v\Vert_{L^2}
   \\&
   \lec
    \Vert v\Vert_{L^2}^{1/3.5}
    \Vert v\Vert_{H^{3.5}}^{6/3.5}
   .
   \end{split}
   \label{EQ76}
  \end{align}
\item To prove \eqref{EQ326}, we first claim
  \begin{align}
   \begin{split}
   &
   \frac12
   \frac{d}{dt}
   \int  J \Lambda^{2.5} v_i \Lambda^{3.5} v_i
     =
      \frac12 \int J_t \Lambda^{2.5} v_i \Lambda^{3.5} v_i
     + \int 
          J \Lambda^{2.5} \partial_{t}v_i
            \Lambda^{3.5} v_i
     + \bar I
    ,
   \end{split}
   \label{EQ328}
  \end{align}
where
  \begin{equation}
  \begin{split}
   \bar I
   &= \frac12\int 
          J \Lambda^{2.5} v_i
            \Lambda^{3.5} \partial_{t}v_i
     -
     \frac12\int 
          J \Lambda^{2.5} \partial_{t}v_i
            \Lambda^{3.5} v_i\\
   &\leq
     P(
       \Vert v\Vert_{H^{3.5}},
       \Vert q\Vert_{H^{2.5}},
       \Vert w\Vert_{H^{5}(\Gamma_1)},
       \Vert w_{t}\Vert_{H^{3}(\Gamma_1)}
      )
   .
   \label{EQ327}
   \end{split}
  \end{equation}

\item Next, the equations \eqref{eulerr} are rewritten as
  \begin{align}
   \begin{split}
   &
    J\partial_{t} v_i
    + v_1 F_{j1} \partial_{j}v_i
    + v_2 F_{j2} \partial_{j}v_i
    + (v_3-\psi_t) \partial_{3} v_i
    + F_{ki}\partial_{k}q
    =0
    ,
    \\&
    F_{ki} \partial_{k}v_i=0
    .
   \end{split}
   \label{EQ54}
  \end{align}
Using \eqref{EQ54}$_1$
in the second term of \eqref{EQ328}, we obtain
  \begin{align}
   \begin{split}
   &
   \frac12
   \frac{d}{dt}
   \int  J \Lambda^{2.5} v_i \Lambda^{3.5} v_i
     \\&\indeq
     =
      \frac12 \int J_t \Lambda^{2.5} v_i \Lambda^{3.5}v_i
     + \int 
         \Bigl(
          J \Lambda^{2.5} (\partial_{t}v_i)
            - 
          \Lambda^{2.5}(J\partial_t v_i) 
         \Bigr) \Lambda^{3.5}v_i
    \\&\indeq\indeq
    - \sum_{m=1}^{2}\int \Lambda^{2.5}(v_m F_{jm}\partial_{j}v_i) \Lambda^{3.5} v_i
    - \int \Lambda^{2.5} 
            \bigl(
              (v_3-\psi_t)\partial_{3}v_i                                                          
            \bigr) \Lambda^{3.5} v_i
   \\&\indeq\indeq
    -\int \Lambda^{3}(F_{ki}\partial_{k}q)\Lambda^{3} v_i 
    + \bar I
    \\&\indeq
    = I_1 + I_2 + I_3 + I_4 + I_5 + \bar I
   .
   \end{split}
   \label{EQ56}
  \end{align}
\item The terms on the right hand side of \eqref{EQ56} are bounded as
  \begin{equation}
  \begin{split}
   I_1 + I_2 + I_3 + I_4 
   &\leq P(
          \Vert v\Vert_{H^{3.5}},
          \Vert q\Vert_{H^{2.5}},
          \Vert w\Vert_{H^{5}(\Gamma_1)},
          \Vert w_{t}\Vert_{H^{3}(\Gamma_1)}
         )
   \end{split}
   \label{EQ17}
   \end{equation}
and
  \begin{equation}
  \begin{split}
    I_5
    &\leq  - 
   \int_{\Gamma_1} q\Lambda^{6}w_{t}+P(
          \Vert v\Vert_{H^{3.5}},
          \Vert q\Vert_{H^{2.5}},
          \Vert w\Vert_{H^{5}(\Gamma_1)},
          \Vert w_{t}\Vert_{H^{3}(\Gamma_1)}
         )
   .
  \end{split}
   \label{EQ29}
   \end{equation}
Applying \eqref{EQ17} and \eqref{EQ29} to \eqref{EQ56} and integrating in time, we then obtain~\eqref{EQ326}.
\end{enumerate}

Adding \eqref{platestp} and \eqref{EQ326} the pressure terms cancel, and we get
\colb
\begin{equation}\label{platestp2}
\begin{split}
  &h\|D^3w_t\|^2_{L^2}+\frac{4\lambda\mu h^3}{24(\lambda+2\mu)}\|D^3\partial_{\alpha\alpha}w\|^2_{L^2}  + \frac{4\mu h^3}{24}\sum_{\alpha,\beta=1}^2\|D^3\partial_{\alpha\beta}w\|^2_{L^2} + \nu \Vert D^4 w_t \Vert^{2}_{L^2} 
  \\&\indeq
   \lec\|D^3w_t(0)\|+\|v\|_{L^2}^{2/7}\|v\|_{H^{3.5}}^{12/7}
   +\int_0^tP(\|w\|_{H^5(\Gamma_1)},\|w_t\|_{H^3(\Gamma_1)},\|q\|_{H^{2.5}})
   ,
\end{split}
\end{equation}
where the implicit constant is independent of~$\nu$.

\begin{Remark}
\label{R03}
{\rm
While in our paper we consider the nonlinear Koiter type plate, the analysis can also be extended to other type of structures. Namely,
we may treat:\\
\indent 1. Extensible plates:
\begin{align*}
w_{tt} + \Delta^{2} w - \nu \Delta w_{t} + \left(\int_{\Gamma_{1}}  |\nabla w |^{2}   \, dx \right) \Delta w = q 
 .
 \end{align*}
\indent 2. Koiter shells: This is the more general setting in which the reference configuration of the interface is not flat. In particular, we may consider a reference configuration parameterized by a $C^{3}$ injective mapping $\phi\colon \Gamma_{1} \to \mathbb{R}^{3} $ with $\Gamma_{1}= \mathbb{R}^{2} /\mathbb{Z}^{2}$. In this case, we may assume that the deformation happens primarily in the normal direction and is described by the mapping
 \begin{align}
 \phi_{w}(y,t) = \phi(y) + w(t,y) n(y)
    \comma y\in\Gamma_1
    .
 \end{align}
 The tangent vectors $a_{\alpha} $ in this case are no longer the standard vectors $e_{\alpha}$ and are space dependent. This gives rise to a space dependent tensor~$a^{\alpha \beta \gamma\delta}$.
The local existence theorem can also be obtained  in this case under additional assumptions on the initial configuration which guarantee nondegeneracy of the $H^{2}$-coercivity of the Koiter energy $\mathcal{E}_{K}$. Unlike in our situation, the time of existence also depends on the persistence of this nondegeneracy condition; see~\cite{MS}.
}
\end{Remark}

\subsection{Pressure estimates}
Here we establish the following pressure estimate.

\cole
\begin{Lemma}
\label{L03}
Under the conditions of Theorem~\ref{main}, we have
  \begin{align}
   \begin{split}
     \Vert q\Vert_{H^{2.5}}
     \leq
      P(
          \Vert v\Vert_{H^{3.5}},
          \Vert w\Vert_{H^{5}(\Gamma_1)},
          \Vert w_{t} \Vert_{H^{3}(\Gamma_1)}
       )
   .
   \end{split}
   \label{EQ78}
  \end{align}
\end{Lemma}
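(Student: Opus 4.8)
The plan is to derive an elliptic equation for $q$ on the fixed domain $\Omega$ and then apply elliptic regularity. Taking the ALE-divergence of the momentum equation \eqref{eulerr}$_1$ and using the incompressibility constraint \eqref{eulerr}$_2$, one obtains a second-order elliptic equation of the form $E_{ki}\partial_k(E_{ji}\partial_j q) = g$, where $g$ collects quadratic-in-$v$ terms of the schematic form $(\text{derivatives of }E)\cdot v\cdot\nabla v$ plus terms coming from commutators between $\partial_t$ and the ALE transport operator, together with the terms involving $\psi_t$. Because of Lemma~\ref{L01}, with $\epsilon$ chosen small, the principal symbol $E^T E$ is a small perturbation of the identity, so the operator $q\mapsto E_{ki}\partial_k(E_{ji}\partial_j q)$ is uniformly elliptic with coefficients in $H^{2.5}$ (by \eqref{EQ48}), which is the regularity needed to run $H^{2.5}$ elliptic estimates. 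The forcing term $g$ is bounded in $H^{0.5}$ by $P(\Vert v\Vert_{H^{3.5}},\Vert w\Vert_{H^5(\Gamma_1)},\Vert w_t\Vert_{H^3(\Gamma_1)})$ via Sobolev product estimates — here $v\in H^{3.5}$ gives $\nabla v\in H^{2.5}$, and $H^{2.5}$ is an algebra in two/three dimensions, so the products land in $H^{2.5}\hookrightarrow H^{0.5}$.

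The boundary conditions are the crux. On the bottom boundary $\Gamma_0$ one differentiates the slip condition \eqref{vg0} (equivalently $u\cdot N=0$) in time and uses the momentum equation to convert $\partial_t v_3$ into a pressure-gradient term, yielding a Neumann-type condition $\partial_N q = (\text{lower order in }v)$ on $\Gamma_0$. On the top boundary $\Gamma_1$ the plate equation \eqref{plattee} gives $q = h w_{tt} + L_K w - \nu\Delta w_t$; since one cannot afford $w_{tt}$ directly at this regularity, the standard trick (as in \cite{KT1}) is to time-differentiate the velocity-matching condition \eqref{EQ21}, $F_{3i}v_i = w_t$, to express $w_{tt}$ in terms of $\partial_t v$ and $\partial_t F$, and then again substitute the momentum equation for $\partial_t v$; this produces a Robin-type boundary condition on $\Gamma_1$ coupling $q$ and its normal derivative, with the coefficient of $q$ controlled by $L_K$ applied to $w\in H^5$, which lands in $H^1(\Gamma_1)$ — consistent with the $H^{2.5}(\Omega)$ trace on $\Gamma_1$ being in $H^1(\Gamma_1)$. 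The key structural point, inherited from \cite{KT1}, is that the Robin coefficient has the correct sign (or the bad term can be absorbed because $\nu\geq0$ contributes with a favorable sign, and the $L_K$ contribution is a lower-order perturbation once the bending term $L_b$ is integrated by parts), so the Robin/Neumann elliptic problem is well-posed with the normalization \eqref{PressureNormalized}.

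With the equation and boundary data in hand, I would invoke standard elliptic regularity for the Robin problem: $\Vert q\Vert_{H^{2.5}} \lesssim \Vert g\Vert_{H^{0.5}} + \Vert \text{bottom data}\Vert_{H^{1}(\Gamma_0)} + \Vert\text{top data}\Vert_{H^1(\Gamma_1)} + \Vert q\Vert_{L^2}$, where the constant depends only on the (small) ellipticity constant and the $H^{2.5}$-norm of the coefficients, hence ultimately on $\Vert w\Vert_{H^5(\Gamma_1)}$. The $\Vert q\Vert_{L^2}$ term is handled by the zero-average normalization \eqref{PressureNormalized} together with a Poincaré-type bound, or is absorbed since the Robin form is coercive. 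Collecting all bounds yields \eqref{EQ78}.

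The main obstacle I expect is \emph{not} the interior estimate but the careful bookkeeping at $\Gamma_1$: one must verify that after substituting $w_{tt}$ via the differentiated kinematic condition and then eliminating $\partial_t v$ via the momentum equation, all the resulting terms genuinely have the claimed regularity (in particular that no term forces more than $H^{2.5}$ on $q$ or more than $H^{3.5}$ on $v$), and that the effective Robin coefficient is of the right sign so that the elliptic problem is solvable with a constant independent of $\nu$. A secondary technical point is tracking fractional Sobolev product and trace inequalities (e.g. $H^{2.5}(\Omega)\cdot H^{2.5}(\Omega)\hookrightarrow H^{2.5}(\Omega)$ and the trace $H^{2.5}(\Omega)\to H^{1}(\Gamma_1)$) to make sure the polynomial $P$ on the right-hand side depends only on the three norms listed in \eqref{EQ78}, with no hidden dependence on $\Vert v_t\Vert$, $\Vert w_{tt}\Vert$, or $q$ itself.
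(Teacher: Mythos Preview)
Your proposal is correct and follows essentially the same approach as the paper. The only refinement worth noting is that the paper writes the interior equation in divergence form $\partial_j(F_{ji}E_{ki}\partial_k q) = \div f$ by applying $F_{ji}\partial_j$ (rather than $E_{ji}\partial_j$) to the momentum equation and invoking the Piola identity $\partial_j F_{ji}=0$, which allows direct application of an $H^{l}$ Robin--Neumann elliptic regularity lemma (Lemma~\ref{L09}, extending Lemma~\ref{L08} from \cite{KT1}) with $f\in H^{1.5}$; the Robin coefficient on $\Gamma_1$ comes out to be simply the constant $h^{-1}$ (with $L_Kw$ and $-\nu\Delta w_t$ absorbed into the data $g_1$), so no sign or absorption issues arise.
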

\colb

To obtain an equation for the pressure in the interior, we apply $F_{ji}\partial_j$ to \eqref{eulerr} and then use the Piola identity \eqref{piola}
to write
  \begin{equation}
\partial_j(F_{ji}E_{ki}\partial_kq)=\partial_j(\partial_tF_{ji}v_i)-\partial_j\left(\sum_{m=1}^{2}F_{ji}v_mE_{km}\partial_kv_i\right)-\partial\left(E_{ji}(v_3-\psi_t)\partial_3v_i\right)
\inon{in $\Omega$}
,
   \label{EQ05}
\end{equation}
using the divergence-free condition.

To obtain the boundary condition on
$\Gamma_0\cup\Gamma_1$, we multiply \eqref{eulerr} with~$F_{3i}$ obtaining
  \begin{align}
   \begin{split}
       F_{3i}E_{ki} \partial_{k}q
    &= - F_{3i}\partial_{t}v_i
       - F_{3i} v_1 E_{j1} \partial_{j}v_i
     \\&\indeq
       - F_{3i} v_2 E_{j2} \partial_{j}v_i
       - E_{3i} (v_3-\psi_t) \partial_{3} v_i
     \inon{on $\Gamma_0\cup\Gamma_1$}    
    ,
   \end{split}
   \label{EQ81}
  \end{align}
where we again employed $F_{ji} / \partial_{3} \psi=E_{ji}$ in the last term.
On $\Gamma_1$, we use \eqref{EQ21} and \eqref{plattee} and rewrite the first term on the right-hand side of \eqref{EQ81} as
  \begin{align}
  \begin{split}
   -F_{3i}\partial_{t} v_i 
    &= - \partial_{t}(F_{3i} v_i) + \partial_{t}F_{3i} v_i
    = - w_{tt} + \partial_{t}F_{3i} v_i
    \\&
    = h^{-1}L_Kw
      - h^{-1} \nu \Delta w_{t}
    - h^{-1}q + \partial_{t}F_{3i} v_i
  .
  \end{split}
   \llabel{EQ82}
  \end{align}
Thus, on $\Gamma_1$, the boundary condition \eqref{EQ81} becomes
a Robin boundary condition
  \begin{align}
   \begin{split}
   F_{3i}E_{ki}\partial_{k}q
   + h^{-1}q
   &=h^{-1} (L_Kw
    -  \nu \Delta w_{t} )
   + \partial_{t}F_{3i}v_i
       - F_{3i} v_1 E_{j1} \partial_{j}v_i
       - F_{3i} v_2 E_{j2} \partial_{j}v_i\\&\indeq
     	- E_{3i} (v_3-\psi_t) \partial_{3} v_i   
    = g_1
    \inon{on $\Gamma_1$}
   .
   \end{split}
   \label{EQ83}
  \end{align}
On $\Gamma_0$, we have $E=I$, and then the first term on the right hand side of \eqref{EQ81} vanishes. Thus we get
  \begin{align}
   \begin{split}
   F_{3i}E_{ki}\partial_{k}q
   =
       - F_{3i} v_1 E_{j1} \partial_{j}v_i
       - F_{3i} v_2 E_{j2} \partial_{j}v_i
       - E_{3i} (v_3-\psi_t) \partial_{3} v_i   
    = g_0
    \inon{on $\Gamma_0$}
   .
   \end{split}
   \label{EQ84}
  \end{align}
To estimate the pressure, we need the following statement from \cite{KT1} on the elliptic regularity for the Robin-Neumann problem.

\cole
\begin{Lemma}
\label{L08}
Assume that 
$d\in W^{1,\infty}(\Omega)$.
Let $1\leq l\leq 2$, and suppose that $\qqq$ is an $H^{l}$
solution of
  \begin{align}
   \begin{split}
   &\partial_{i}(d_{ij}\partial_{j}\qqq) = \div f
   \inon{in $\Omega$}
   ,
   \\&
   d_{mk}\partial_{k}\qqq N_{m} + u=g_1
   \inon{on $\Gamma_1$}
   ,
   \\&
   d_{mk}\partial_{k}\qqq N_{m}=g_0
   \inon{on $\Gamma_0$}
   .
   \end{split}
   \label{EQ85}
  \end{align}
If
  \begin{equation}
   \Vert d-I\Vert_{L^\infty}
   \leq \epsilon_0
   ,
   \label{EQ86}
  \end{equation}
where $\epsilon_0>0$  is sufficiently small, then
  \begin{equation}
   \Vert u\Vert_{H^{l}}
   \lec
   \Vert f\Vert_{H^{l-1}}
   +  \Vert g_1\Vert_{H^{l-3/2}(\Gamma_1)}
   +  \Vert g_0\Vert_{H^{l-3/2}(\Gamma_0)}
   .
   \label{EQ87}
  \end{equation}
\end{Lemma}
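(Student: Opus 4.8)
\emph{Proof plan.} The strategy is to treat $\partial_i(d_{ij}\partial_j\qqq)$ as a small perturbation of the flat Laplacian and to absorb the perturbation using the smallness~\eqref{EQ86}. Writing $d=I+(d-I)$ and setting $G\defeq(d-I)\nabla\qqq$, and using $d_{mk}\partial_k\qqq\,N_m=\partial_N\qqq+G\cdot N$ on $\partial\Omega$, the problem~\eqref{EQ85} is equivalent to
\[
\Delta\qqq=\div(f-G)\inin{\Omega},\qquad
\partial_N\qqq+\qqq=g_1-G\cdot N\onon{\Gamma_1},\qquad
\partial_N\qqq=g_0-G\cdot N\onon{\Gamma_0}.
\]
So the two ingredients are the constant-coefficient version of~\eqref{EQ87} (for $\Delta\qqq=\div\tilde f$ with the same mixed boundary conditions) and the control of $G$ in $H^{l-1}$.

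The case $l=1$ of~\eqref{EQ87} is obtained directly: for $\epsilon_0$ small, \eqref{EQ86} makes $d$ elliptic, and the zeroth-order term on $\Gamma_1$ supplies a Poincar\'e-type bound $\Vert\qqq\Vert_{L^2(\Omega)}\lec\Vert\nabla\qqq\Vert_{L^2(\Omega)}+\Vert\qqq\Vert_{L^2(\Gamma_1)}$, so the form $\qqq\mapsto\int_\Omega d_{ij}\partial_j\qqq\,\partial_i\qqq+\int_{\Gamma_1}\qqq^2$ is coercive on $H^1(\Omega)$; testing the weak formulation of~\eqref{EQ85} with $\qqq$ then yields $\Vert\qqq\Vert_{H^1}\lec\Vert f\Vert_{L^2}+\Vert g_1\Vert_{H^{-1/2}(\Gamma_1)}+\Vert g_0\Vert_{H^{-1/2}(\Gamma_0)}$, and in particular there is no solvability obstruction (in contrast to the pure Neumann problem). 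For the constant-coefficient problem the same energy estimate gives $l=1$; since $\Omega=\TT^2\times[0,1]$ is flat, the case $l=2$ follows from finite differences in the tangential variables $x_1,x_2$ together with the equation itself, which expresses $\partial_{33}\qqq$ through terms already controlled in $L^2$, and the fractional range $1<l<2$ then follows by interpolation.

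For $1<l\le 2$, apply the constant-coefficient estimate to the reduced problem above. By the normal-trace estimate for vector fields, $\Vert G\cdot N\Vert_{H^{l-3/2}(\Gamma_j)}\lec\Vert G\Vert_{H^{l-1}(\Omega)}+\Vert\div G\Vert_{H^{l-2}(\Omega)}\lec\Vert G\Vert_{H^{l-1}}$, so
\[
\Vert\qqq\Vert_{H^l}\lec\Vert f\Vert_{H^{l-1}}+\Vert g_1\Vert_{H^{l-3/2}(\Gamma_1)}+\Vert g_0\Vert_{H^{l-3/2}(\Gamma_0)}+\Vert G\Vert_{H^{l-1}}.
\]
A commutator/product estimate, available since $d-I\in W^{1,\infty}$ and $0\le l-1\le 1$, gives $\Vert G\Vert_{H^{l-1}}=\Vert(d-I)\nabla\qqq\Vert_{H^{l-1}}\lec\Vert d-I\Vert_{L^\infty}\Vert\qqq\Vert_{H^l}+C(\Vert d\Vert_{W^{1,\infty}})\Vert\qqq\Vert_{H^1}$. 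Choosing $\epsilon_0$ so small that the constant multiplying $\Vert d-I\Vert_{L^\infty}\le\epsilon_0$ is at most $1/2$ absorbs the first term into the left-hand side, while the second term is bounded by the case $l=1$ already established, whose right-hand side is in turn $\lec$ the right-hand side of~\eqref{EQ87} because $\Vert f\Vert_{L^2}\le\Vert f\Vert_{H^{l-1}}$ and $H^{l-3/2}(\Gamma_j)\hookrightarrow H^{-1/2}(\Gamma_j)$ for $l\ge1$. This proves~\eqref{EQ87}.

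The main obstacle is precisely the limited regularity of $d$: since $d$ is only assumed Lipschitz, the perturbation $G=(d-I)\nabla\qqq$ contains the genuinely lower-order piece $(\nabla d)\cdot\nabla\qqq$, whose $H^{l-1}$ norm is controlled only by $\Vert\qqq\Vert_{H^1}$ with a constant depending on $\Vert d\Vert_{W^{1,\infty}}$ rather than on the small parameter $\epsilon_0$; it therefore cannot be absorbed by smallness, which is exactly why the independent coercivity (energy) bound of the second step is needed. A secondary point requiring care is the fractional endpoint regularity $1<l<2$ of the mixed Robin/Neumann Laplacian, but on the flat domain $\Omega=\TT^2\times[0,1]$ this is routine via tangential difference quotients and interpolation.
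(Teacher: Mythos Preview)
The paper does not give its own proof of this lemma; it is quoted from \cite{KT1}. Your argument is correct and is the standard perturbation-of-the-Laplacian proof one would expect to find there: an $H^1$ energy estimate using the Robin term for coercivity, $H^2$ regularity on the flat strip via tangential difference quotients plus recovery of $\partial_{33}u$ from the equation, interpolation for $1<l<2$, and then absorption of $G=(d-I)\nabla u$ using the smallness $\Vert d-I\Vert_{L^\infty}\le\epsilon_0$ together with the already-proven $H^1$ bound to handle the non-small lower-order piece carrying $\Vert\nabla d\Vert_{L^\infty}$.

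Two small remarks. First, the normal-trace detour is unnecessary: if you phrase everything through the weak formulation
\[
\int_\Omega d_{ij}\partial_j u\,\partial_i\phi+\int_{\Gamma_1}u\phi=\int_\Omega f\cdot\nabla\phi+\langle g_1,\phi\rangle_{\Gamma_1}+\langle g_0,\phi\rangle_{\Gamma_0},
\]
then splitting $d=I+(d-I)$ simply moves $\int_\Omega G\cdot\nabla\phi$ to the right, i.e.\ replaces $f$ by $f-G$ with the \emph{same} boundary data $g_0,g_1$; no separate control of $G\cdot N$ in $H^{l-3/2}$ is needed. Second, your final constant depends on $\Vert d\Vert_{W^{1,\infty}}$ through the lower-order term, which is not displayed in \eqref{EQ87}; this dependence is genuinely unavoidable for $l>1$ and is harmless in the application (it is absorbed into the polynomial $P$ in Lemma~\ref{L09}, whose conclusion does record the dependence on $\Vert d\Vert_{W^{2,\infty}}$).
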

\colb

In this paper, we need an extension for $l\leq3$.

\cole
\begin{Lemma}
\label{L09}
Assume that 
$d\in W^{2,\infty}(\Omega)$.
Let $2\leq l\leq 3$, and suppose that $\qqq$ is an $H^{l}$
solution of
  \begin{align}
   \begin{split}
   &\partial_{i}(d_{ij}\partial_{j}\qqq) = \div f
   \inon{in $\Omega$}
   ,
   \\&
   d_{mk}\partial_{k}\qqq N_{m} + u=g_1
   \inon{on $\Gamma_1$}
   ,
   \\&
   d_{mk}\partial_{k}\qqq N_{m}=g_0
   \inon{on $\Gamma_0$}
   .
   \end{split}
   \label{EQ30}
  \end{align}
If
  \begin{equation}
   \Vert d-I\Vert_{L^\infty}
   \leq \epsilon_0
   ,
   \label{EQ34}
  \end{equation}
where $\epsilon_0>0$  is sufficiently small, then
  \begin{equation}
   \Vert u\Vert_{H^{l}}
   \leq
  P(\Vert f\Vert_{H^{l-1}},  \Vert g_1\Vert_{H^{l-3/2}(\Gamma_1)},  \Vert g_0\Vert_{H^{l-3/2}(\Gamma_0)}, \|d\|_{W^{2,\infty}})
   .
   \label{EQq38}
  \end{equation}
\end{Lemma}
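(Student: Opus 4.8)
The plan is to bootstrap from Lemma~\ref{L08}, which handles the range $1\le l\le 2$, by differentiating the problem once in a horizontal (tangential) direction and then recovering the remaining normal regularity directly from the equation. Fix $\tau\in\{1,2\}$ and put $v=\p_\tau u$. Since $\p_\tau$ commutes with the trace onto $\Gamma_0\cup\Gamma_1$ (where the unit normal is $\pm e_3$), applying $\p_\tau$ to \eqref{EQ30} shows that $v$ solves the same type of system,
\begin{align*}
&\p_i(d_{ij}\p_j v)=\div\tilde f\inon{in $\Omega$},\\
&d_{mk}\p_k v\,N_m+v=\tilde g_1\inon{on $\Gamma_1$},\\
&d_{mk}\p_k v\,N_m=\tilde g_0\inon{on $\Gamma_0$},
\end{align*}
with the \emph{same} coefficient matrix $d$, and with $\tilde f=\p_\tau f-(\p_\tau d)\nabla u$, $\tilde g_1=\p_\tau g_1-(\p_\tau d_{mk})N_m\,\p_k u|_{\Gamma_1}$, and $\tilde g_0=\p_\tau g_0-(\p_\tau d_{mk})N_m\,\p_k u|_{\Gamma_0}$. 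In particular \eqref{EQ34} is unchanged; and since $u\in H^l$ with $l>2$ we have $v\in H^{l-1}$ and $\tilde f\in L^2$, so $v$ is an admissible solution for Lemma~\ref{L08} at the level $l-1\in[1,2]$.

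Applying Lemma~\ref{L08} to $v$ gives
\[
\|\p_\tau u\|_{H^{l-1}}\lesssim\|\tilde f\|_{H^{l-2}}+\|\tilde g_1\|_{H^{l-5/2}(\Gamma_1)}+\|\tilde g_0\|_{H^{l-5/2}(\Gamma_0)},
\]
and it remains to bound the right-hand side. Lemma~\ref{L08} at $l=2$ already yields $\|u\|_{H^2}\lesssim\|f\|_{H^{1}}+\|g_1\|_{H^{1/2}(\Gamma_1)}+\|g_0\|_{H^{1/2}(\Gamma_0)}\lesssim P$, where I write $P=P(\|f\|_{H^{l-1}},\|g_1\|_{H^{l-3/2}(\Gamma_1)},\|g_0\|_{H^{l-3/2}(\Gamma_0)},\|d\|_{W^{2,\infty}})$ (using $l\ge2$ so that $\|f\|_{H^1}\le\|f\|_{H^{l-1}}$ and $\|g_j\|_{H^{1/2}}\le\|g_j\|_{H^{l-3/2}}$). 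Since $l-2\in[0,1]$ and $d\in W^{2,\infty}$, the product rule gives $\|(\p_\tau d)\nabla u\|_{H^{l-2}}\lesssim\|d\|_{W^{2,\infty}}\|u\|_{H^{l-1}}\lesssim P$ (as $l-1\le2$), hence $\|\tilde f\|_{H^{l-2}}\lesssim\|f\|_{H^{l-1}}+\|d\|_{W^{2,\infty}}\|u\|_{H^{l-1}}\lesssim P$. For the boundary terms one cannot simply restrict $(\p_\tau d)\nabla u$ from $H^{l-2}(\Omega)$ when $l$ is close to $2$; instead use that $u\in H^2$ forces $\nabla u|_{\Gamma_j}\in H^{1/2}(\Gamma_j)$ and $H^{1/2}(\Gamma_j)\hookrightarrow H^{l-5/2}(\Gamma_j)$ for $l\le3$, so $\|\tilde g_j\|_{H^{l-5/2}(\Gamma_j)}\lesssim\|g_j\|_{H^{l-3/2}(\Gamma_j)}+\|d\|_{W^{2,\infty}}\|u\|_{H^2}\lesssim P$. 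Thus $\|\p_1 u\|_{H^{l-1}}+\|\p_2 u\|_{H^{l-1}}\lesssim P$.

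Finally the pure normal second derivative is recovered from the equation: expanding \eqref{EQ30}$_1$ gives
\[
d_{33}\,\p_3^2 u=\div f-(\p_i d_{ij})\p_j u-\sum_{(i,j)\neq(3,3)}d_{ij}\,\p_i\p_j u.
\]
Each term on the right lies in $H^{l-2}(\Omega)$ with norm $\lesssim P$: $\div f\in H^{l-2}$ since $f\in H^{l-1}$, and in every remaining term at least one derivative is tangential, so $\p_j u$ (resp.\ $\p_i\p_j u$) is controlled in $H^{l-2}$ by $\|u\|_{H^2}$ (resp.\ by $\|\p_\tau u\|_{H^{l-1}}$, writing $\p_i\p_j u=\p_k(\p_\tau u)$), and multiplication by the $W^{1,\infty}$ coefficients preserves $H^{l-2}$ because $l-2\in[0,1]$. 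Because $|d_{33}-1|\le\epsilon_0<1$, dividing by $d_{33}$ yields $\|\p_3^2 u\|_{H^{l-2}}\lesssim P$. Together with the tangential bounds this controls $\nabla(\p_3 u)$ and $\nabla(\p_\tau u)$ in $H^{l-2}$, hence $\nabla u\in H^{l-1}$ with norm $\lesssim P$, so $\|u\|_{H^l}\lesssim\|u\|_{L^2}+\|\nabla u\|_{H^{l-1}}\lesssim P$, which is \eqref{EQq38} (the case $l=2$ being of course just Lemma~\ref{L08}). The only step that is not routine bookkeeping is the treatment of the boundary error terms $\|(\p_\tau d)\nabla u\|_{H^{l-5/2}(\Gamma_j)}$ for $l$ near the endpoint $2$, where the naive trace bound from $H^{l-2}(\Omega)$ is unavailable and one must instead exploit the a priori $H^2$ regularity of $u$; everything else is a straightforward perturbation off the constant-coefficient Laplacian already handled in Lemma~\ref{L08}.
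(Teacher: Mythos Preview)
Your proof is correct and follows essentially the same strategy as the paper: differentiate once in a tangential direction $\tau\in\{1,2\}$, apply Lemma~\ref{L08} to $\partial_\tau u$, and then recover the missing normal regularity from the equation $d_{33}\partial_3^2 u=\div f-(\partial_i d_{ij})\partial_j u-\sum_{(i,j)\neq(3,3)}d_{ij}\partial_i\partial_j u$. The only notable difference is in how the normal step is organized: the paper first treats the endpoint $l=3$ by differentiating once more in $x_3$ to estimate $\|\partial_3^3 u\|_{L^2}$, and then obtains the intermediate $2<l<3$ by interpolation with the $l=2$ case from Lemma~\ref{L08}; you instead bound $\partial_3^2 u$ directly in $H^{l-2}$ for every $l\in[2,3]$, using that multiplication by $1/d_{33}\in W^{2,\infty}$ preserves $H^{l-2}$ when $l-2\in[0,1]$. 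Your route is slightly more economical (no interpolation, no extra $\partial_3$), and your explicit handling of the boundary error $(\partial_\tau d)\nabla u|_{\Gamma_j}$ near $l=2$ via the a~priori $H^2$ bound on $u$ is a point the paper treats more implicitly.
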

\colb

\begin{proof}[Proof of Lemma~\ref{L09}]
We only show the a~priori estimate, i.e., we assume that $u$ is smooth
and satisfies~\eqref{EQ85}. The a~priori estimate can then be
justified
in a standard way using the difference quotients.
Let $n\in\{1,2\}$. Then, differentiating \eqref{EQ30},
we obtain that $U=\partial_{n}u$ satisfies
  \begin{align}
   \begin{split}
   &\partial_{i}(d_{ij}\partial_{j}U)
     = \div \partial_n f
        - \partial_{i}(\partial_{n}d_{ij}\partial_{j}u)
   \inon{in $\Omega$}
   ,
   \\&
   d_{mk}\partial_{k}U N_{m} + U
      = \partial_{n} g_1
         -    \partial_{n}d_{mk}\partial_{k}u N_{m} 
   \inon{on $\Gamma_1$}
   ,
   \\&
   d_{mk}\partial_{k}U N_{m}
     =
     \partial_{n}g_0
     - \partial_{n} d_{mk}\partial_{k}u N_{m}
   \inon{on $\Gamma_0$}
   .
   \end{split}
   \llabel{EQ39}
  \end{align}
Applying Lemma~\ref{L08}, and the fact that
$   \Vert Du  \Vert_{H^{l-5/2}(\Gamma_0\cup\Gamma_1)}
 \lec\Vert u  \Vert_{H^{l-1}}$, we get
  \begin{align}
    \begin{split}
   \Vert U\Vert_{H^{l-1}}
   &\lec
   \Vert \partial_n f\Vert_{H^{l-2}}
   +  \Vert\partial_n d_{ij}\partial_j u\Vert_{H^{l-2}}
   +  \Vert \partial_{n}g_1\Vert_{H^{l-5/2}(\Gamma_1)}
   \\&\indeq
   +  \Vert \partial_{n}g_0\Vert_{H^{l-5/2}(\Gamma_0)}
   +  \Vert \partial_{n}d_{mk}\partial_{k}u N_{m} \Vert_{H^{l-5/2}(\Gamma_0\cup\Gamma_1)}
   \\&
   \lec
   \Vert f\Vert_{H^{l-1}}
   +\Vert d\Vert_{W^{1,\infty}}\Vert u\Vert_{H^{l-1}}
   +  \Vert g_1\Vert_{H^{l-3/2}(\Gamma_1)}
   +  \Vert g_0\Vert_{H^{l-3/2}(\Gamma_0)}
   \\&\indeq
   +
   \Vert d\Vert_{W^{1,\infty}}
   \Vert Du  \Vert_{H^{l-5/2}(\Gamma_0\cup\Gamma_1)}
    \\&
   \lec
   (1+\Vert d\Vert_{W^{1,\infty}})
   \left( 
   \Vert f\Vert_{H^{l-1}}
   +  \Vert g_1\Vert_{H^{l-3/2}(\Gamma_1)}
   +  \Vert g_0\Vert_{H^{l-3/2}(\Gamma_0)}
   \right)
   .
  \end{split}
   \label{EQ38}
  \end{align}
Next, we rearrange \eqref{EQ30} to obtain
 \begin{equation}
 	d_{33}\partial_{33}^2u=\div f-\sum_{(i,j)\neq(3,3)}\partial_i(d_{ij}\partial_ju)-\partial_3d_{33}\partial_3u.
	\label{EQ39}
 \end{equation}
 Differentiating \eqref{EQ39} with $\partial_3$ we have
 \begin{equation} 
 	d_{33}\partial_{333}^3u=\div \partial_3 f-\sum_{(i,j)\neq(3,3)}\partial_3\partial_i(d_{ij}\partial_ju)-\partial_{33}^2d_{33}\partial_3u-2\partial_{3}d_{33}\partial_{33}^2u.
\end{equation}
Now, using \eqref{EQ38} with $l=3$, \eqref{EQ34} and Lemma~\ref{L08} we may write
\begin{equation}
\begin{split}
\|\partial_{333}u\|_{L^2}&\lec \Vert f\Vert_{H^{2}}+\|d\|_{L^{\infty}}\|U\|_{H^2}+\|d\|_{W^{2,\infty}}\|u\|_{H^1}+\|d\|_{W^{1,\infty}}\|u\|_{H^2}
 \\&
   \lec
   P(\Vert f\Vert_{H^{2}},  \Vert g_1\Vert_{H^{3/2}(\Gamma_1)},  \Vert g_0\Vert_{H^{3/2}(\Gamma_0)}, \|d\|_{W^{2,\infty}}).
   \end{split}
   \label{EQ40}
\end{equation}
Combining \eqref{EQ38} for $l=3$ and \eqref{EQ40}, we obtain the statement of the lemma for $l=3$. Using Lemma~\ref{L08} and interpolating between $l=2$ and $l=3$, we obtain the statement of the lemma.
\end{proof}

We now prove Lemma~\ref{L03}.

\begin{proof}[Proof of Lemma~\ref{L03}]
We apply the elliptic estimate \eqref{EQ87}
in $H^{2.5}$ to the equation \eqref{EQ05} with the boundary conditions
\eqref{EQ83}--\eqref{EQ84}, leading to
  \begin{align}
   \begin{split}
   \Vert q\Vert_{H^{2.5}}
   \lec
   P(\Vert f\Vert_{H^{1.5}},  \Vert g_1\Vert_{H^{1}(\Gamma_1)},  \Vert g_0\Vert_{H^{1}(\Gamma_0)}, \|d\|_{W^{2,\infty}})
   .
   \end{split}
   \label{EQ89}
  \end{align}
For the interior  term, we may write
  \begin{align}
   \begin{split}
   \Vert f\Vert_{H^{1.5}}
   &\lec
   \sum_{j=1}^{3}\Vert \partial_{t}F_{ji} v_i\Vert_{H^{\rr-2}}
   +   \sum_{j=1}^3\sum_{m=1}^{2}   \Vert F_{ji} v_m E_{km}\partial_{k}v_i\Vert_{H^{1.5}}
   +   \sum_{j=1}^{3}\Vert a_{ji} (v_3-\psi_t)\partial_{3}v_i\Vert_{H^{1.5}}
   \\&
   \leq P(
          \Vert v\Vert_{H^{3.5}},
          \Vert E\Vert_{H^{4.5}},
          \Vert F\Vert_{H^{4.5}},
          \Vert F_t\Vert_{H^{2.5}},
          \Vert \psi_t\Vert_{H^{3.5}},
         )
   .
   \end{split}
   \label{EQ90}
  \end{align}
Next, we observe that from \eqref{lkexplicit} we have
  \begin{equation}
  \|L_Kw\|_{H^1(\Gamma_1)}\leq P(\Vert w\Vert_{H^{5}(\Gamma_1)})
  .
  \end{equation}
For the boundary terms, we estimate
  \begin{align}
   \begin{split}
    &\Vert g_0\Vert_{H^{1}(\Gamma_0)}
     + \Vert g_1\Vert_{H^{1}(\Gamma_1)}
     \\&\indeq
     \lec
     \|L_Kw\|_{H^1(\Gamma_1)}
     + \| \Delta w_{t} \|_{H^1(\Gamma_1)}
     + \Vert F_t\Vert_{H^{1.5}(\partial\Omega)}
       \Vert v\Vert_{H^{1.5}(\partial\Omega)}
     \\&\indeq\indeq
     + \Vert F\Vert_{H^{2}(\partial\Omega)}
       \Vert v\Vert_{H^{2}(\partial\Omega)}
       \Vert E\Vert_{H^{2}(\partial\Omega)}
       \Vert \nabla v\Vert_{H^{1}(\partial\Omega)}
     \\&\indeq\indeq
     + \Vert E\Vert_{H^{2}(\partial\Omega)}
       (
         \Vert v\Vert_{H^{2}(\partial\Omega)}
         + \Vert \psi_t\Vert_{H^{2}(\partial\Omega)}
       )
      \Vert \nabla v\Vert_{H^{1}(\partial\Omega)}
    \\&\indeq
     \leq P(
          \Vert w\Vert_{H^{5}(\Gamma_1)},
          \Vert     w_{t} \Vert_{H^{3}(\Gamma_1)},
          \Vert F\Vert_{H^{4.5}},
          \Vert  F_t\Vert_{H^{2.5}},
          \Vert \psi_t\Vert_{H^{3.5}},
          \Vert v\Vert_{H^{3.5}}
         )
    ,
   \end{split}
   \label{EQ91}
  \end{align}
where $\partial\Omega=\Gamma_0\cup \Gamma_1$.
By combining 
\eqref{EQ89}--\eqref{EQ91}
and using \eqref{EQ48}--\eqref{EQ49}, we obtain~\eqref{EQ78}.
\end{proof}

\colb

\subsection{The velocity estimates and the conclusion}
In this section, we use the ALE~vorticity $
\zeta(x,t)=\curl u(\eta(x,t),t)$, where
$\omega=\curl u$, to get control of the velocity~$v$. The vorticity in
the ALE variables is given by \begin{equation} \zeta_i =
\epsilon_{ijk} E_{mj} \partial_{m} v_k \llabel{EQ95} \end{equation}
and satisfies \begin{align} \begin{split} \partial_{t}\zeta_i + v_1
E_{j1}\partial_{j} \zeta_i + v_2 E_{j2}\partial_{j} \zeta_i +
(v_3-\psi_t) E_{j3} \partial_{j} \zeta_i = \zeta_k
E_{mk}\partial_{m}v_i \comma i=1,2,3 .  \end{split} \llabel{EQ94}
\end{align} To obtain non-tangential estimates, we use the Sobolev
extension operator $f\mapsto \tilde f$, which is a
continuous operator $H^{k}(\Omega)\to
H^{k}(\mathbb{T}^2\times\mathbb{R})$ for all $k\in[0,6]$, where $k$ is
not necessarily an integer.  We require the extension to be such that
$\supp \tilde f$ vanishes in a neighborhood of
$\mathbb{T}^2\times(-1/2,3/2)^{\text c}$.  We now consider the
equation for $\theta=(\theta_1,\theta_2,\theta_3)$, given by
\begin{align} \begin{split} \partial_{t}\theta_i + \tilde v_1 \tilde
E_{j1}\partial_{j} \theta_i + \tilde v_2 \tilde E_{j2}\partial_{j}
\theta_i + (\tilde v_3-\tilde \psi_t) \tilde E_{j3} \partial_{j}
\theta_i = \theta_k \tilde E_{mk}\partial_{m}\tilde v_i \comma i=1,2,3
\end{split} \label{EQ98} \end{align}
 in
$\Omega_0=\mathbb{T}^2\times\mathbb{R}$, with the initial condition $
\theta(0)=\tilde \zeta(0) $.  Since $\theta$ obeys a transport type
equation, from the properties of the extension operator we have
\begin{equation} \theta(x,t) = 0 \comma (x,t)\in (\mathbb{T}^2\times
(-1/2,3/2)^{\text c} )\times [0,T] .  \llabel{EQ221} \end{equation}
Next, we define the quantity \begin{equation} X = \int_{\Omega_0}
|\Lambda_3^{2.5}\theta|^2 , \llabel{EQ122} \end{equation} where
\begin{equation} \Lambda_3 =(I-\Delta)^{1/2} \llabel{EQ108}
\end{equation} on ${\mathbb T}^2\times{\mathbb R}$, with the periodic boundary conditions in the $x_1$ and $x_2$ directions.  Repeating the
arguments of \cite[Lemma~3.7]{KT1}, we obtain \begin{align}
\begin{split} \frac{d}{dt}X \lec P(\Vert v\Vert_{H^{3.5}}, \Vert
w\Vert_{H^{5}(\Gamma_1)}, \Vert w_{t}\Vert_{H^{3}(\Gamma_1)} ) X
\comma t\in[0,T] .  \end{split} \label{EQ107} \end{align} Note that,
by the continuity properties of the Sobolev extensions, we have
\begin{equation} \Vert \zeta(0)\Vert_{H^{2.5}}^2 \lec X(0) \lec \Vert
\zeta(0)\Vert_{H^{2.5}}^2 .  \llabel{EQ124} \end{equation}
From \cite[Lemma~3.7]{KT1}, we 
also
obtain the inequality that relates $X$
to the vorticity, given by \begin{equation} \Vert
\zeta\Vert_{H^{2.5}}^2 \lec X .  \label{EQ35} \end{equation} To prove \eqref{EQ35}, we consider the
difference $\sigma=\zeta-\theta$, which satisfies \begin{align}
\partial_{t}\sigma_i + v_1 E_{j1}\partial_{j} \sigma_i + v_2
E_{j2}\partial_{j} \sigma_i + (v_3-\psi_t) E_{j3} \partial_{j}
\sigma_i = \sigma_k E_{mk}\partial_{m} v_i \inon{in $\Omega$} ,
\llabel{EQ102} \end{align} for~$i=1,2,3$.  This leads to the energy
equality \begin{align} \begin{split} \frac12 \frac{d}{dt} \int
|\sigma|^2 &= - \sum_{m=1}^{2} \int v_m E_{jm}\sigma_i \partial_{j}
\sigma_i - \int (v_3-\psi_t) E_{j3} \sigma_i \partial_{j} \sigma_i +
\int \sigma_k E_{mk}\partial_{m}v_i \sigma_i \\& = \frac12
\sum_{m=1}^{2} \int \Bigl(\partial_{j}( v_m E_{jm}) + \partial_{j}
(v_3-\psi_t)\Bigr)|\sigma|^2 + \int \sigma_k E_{mk}\partial_{m}v_i
\sigma_i , \end{split} \label{EQ103} \end{align} where the boundary
terms vanish since \begin{equation} \sum_{m=1}^{2} v_m E_{3m} + (
v_3-\psi_t) E_{33} = 0 \inon{on $\partial\Omega$}.  \llabel{EQ37}
\end{equation} From \eqref{EQ103} and $\sigma(0)=0$ in $\Omega$, we
obtain $\zeta=\theta$ in $\Omega$, and thus \eqref{EQ35} holds.\\
Lastly, we proceed to prove Theorem~\ref{main} and close our a~priori
estimates. 

 \begin{proof}[Proof of Theorem~\ref{main}] 
To establish the a~priori estimates and recover the full regularity of the velocity~$v$,
we employ the div-curl estimate from \cite{BB}, which provide control of the full Sobolev norm of the velocity given information on the divergence, curl, and the trace of the normal component of the velocity.
In particular,
  \begin{equation}\label{divcurl}
   \Vert v\Vert_{H^{3.5}}
   \lec
   \Vert \curl v\Vert_{H^{2.5}}
   +
   \Vert \div v\Vert_{H^{2.5}}
   +
   \Vert v\cdot N\Vert_{H^{3}(\Gamma_0\cup\Gamma_1)}
   +
   \Vert v\Vert_{L^2}
   .
   \end{equation}
We shall use \eqref{EQ35} above to control the curl and the plate-Euler tangential estimates  in \eqref{platestp2} from Section~\ref{sec03} in order control the normal component of the velocity at the boundary via the kinematic boundary condition~\eqref{vmatch}. Finally, for the divergence, we utilize the variable divergence-free condition.
   
For the curl of the velocity, we write
  \begin{align} \begin{split} \Vert \curl v\Vert_{H^{2.5}} &\lec
  \sum_{i} \Vert \epsilon_{ijk} E_{mj} \partial_{m} v_k \Vert_{H^{2.5}}
  + \sum_{i} \Vert \epsilon_{ijk} \partial_{m} v_k (E_{mj}-\delta_{mj})
\Vert_{H^{2.5}} \lec \Vert \zeta \Vert_{H^{2.5}} + \epsilon \Vert
v\Vert_{H^{3.5}} ,  \end{split} \llabel{EQ40} \end{align}
from where we estimate by
\eqref{EQ107} and \eqref{EQ35}, 
\begin{align}
\begin{split} \Vert \curl v\Vert_{H^{2.5}} \lec \Vert
\zeta(0)\Vert_{H^{2.5}} + \epsilon \Vert v\Vert_{H^{3.5}} +
\int_{0}^{t} P( \Vert v\Vert_{H^{3.5}}, \Vert
w\Vert_{H^{5}(\Gamma_1)}, \Vert w_{t}\Vert_{H^{3}(\Gamma_1)} ) X^{1/2}
\,ds .  \end{split} \label{EQ129} \end{align}
To treat the divergence,
we use the divergence-free condition to obtain
\begin{align}
\begin{split} \Vert \div v\Vert_{H^{2.5}} &= \Vert
(E_{ki}-\delta_{ki})\partial_{k} v_i \Vert_{H^{2.5}} \lec \epsilon
\Vert v\Vert_{H^{3.5}} , \end{split} \label{EQ131} \end{align}
while
for the bottom boundary term we have $v_3|_{\Gamma_0}
=0$. Employing \eqref{EQ21} on $\Gamma_1$, we get
\begin{align} \begin{split} \|v_3\|_{H^{3}}\lesssim
\epsilon\|w\|_{H^5(\Gamma_1)}\|v\|_{H^{3.5}}+\epsilon\|v\|_{H^{3.5}}+\|w_t\|_{H^{3}(\Gamma_1)}
, \end{split} \label{EQ132} \end{align} where $\epsilon\in(0,1]$ is
arbitrary.  Next, we bound the second term on the right side of
\eqref{platestp2} as
  \begin{equation}\label{wd}
  \begin{split}
\|v\|_{L^2}^{2/7}\|v\|_{H^{3.5}}^{12/7}\leq&~
\epsilon_1\|v\|^2_{H^{3.5}}+C_{\epsilon_1}\|v\|_{L^2}^2\leq
\epsilon_1\|v\|^2_{H^{3.5}}
+C_{\epsilon_1}\|v_0\|^2_{L^{2}}+C_{\epsilon_1}\int_0^t\partial_t\|v\|^2_{L^2}\\
\leq&~\epsilon_1\|v\|^2_{H^{3.5}}+C_{\epsilon_1}\|v_0\|^2_{L^{2}}+C_{\epsilon_1}\int_0^t\|v\|_{L^2}\|v_t\|_{L^2}\\
\leq& ~ \epsilon_1\|v\|^2_{H^{3.5}}+C_{\epsilon_1}\|v_0\|^2_{L^{2}}
\\& +C_{\epsilon_1}\int_0^tP(\Vert v\Vert_{H^{3.5}}, \Vert
w\Vert_{H^{5}(\Gamma_1)}, \Vert w_{t}\Vert_{H^{3}(\Gamma_1)} ) X\,ds,
   \end{split}
   \end{equation}
where $\epsilon_1\in(0,1]$ is a small
constant to be determined; the last inequality is obtained by using
\eqref{eulerr} to bound \begin{equation} \begin{split}
\|v_t\|_{H^{1}}&\lesssim \|v\|_{H^{3.5}}\|E\|_{H^{3}}\|\nabla
v\|_{H^{1}}+(\|v\|_{H^{3.5}}+\|\psi_t\|_{H^{3}})\|\nabla
v\|_{H^{1}}+\|E\|_{H^{2}}\|q\|_{H^{2}}\\ &\leq
P(\|v\|_{H^{3.5}},\|w\|_{H^{5}(\Gamma_1)},\|w_t\|_{H^{3}(\Gamma_1)}).
   \llabel{EQ08}\end{split} \end{equation} By \eqref{platestp2} and \eqref{wd}, we
have
\begin{equation}\label{platestp3} \begin{split}
&h\|D^3w_t\|^2_{L^2}+\frac{4\lambda\mu
h^3}{24(\lambda+2\mu)}\|D^3\partial_{\alpha\alpha}w\|^2_{L^2} +
\frac{4\mu
h^3}{24}\sum_{\alpha,\beta=1}^2\|D^3\partial_{\alpha\beta}w\|^2_{L^2}\\&\indeq
\leq\|D^3w_t(0)\|+\epsilon_1\|v\|^2_{H^{3.5}}+C_{\epsilon_1}\|v_0\|^2_{L^{2}}+C_{\epsilon_1}\int_0^tP(\|w\|_{H^5},\|w_t\|_{H^3},\|q\|_{H^{2.5}})
X\,ds,  \end{split} \end{equation} 
with the implicit constants again independent of~$\nu$.
Using \eqref{EQ129}, \eqref{EQ131},
and \eqref{EQ132} in \eqref{divcurl}
and fixing a sufficiently small $\epsilon$ so that $\epsilon\Vert
v\Vert_{H^{3}}$ can be absorbed, we conclude
\begin{align}\label{velfin} \begin{split} \Vert v\Vert_{H^{3.5}}^2
&\lec \Vert \zeta(0)\Vert_{H^{2.5}}^2 + \Vert
w_{t}\Vert_{H^{3}(\Gamma_1)}^2 +
\epsilon^2\|w\|^2_{H^{5}(\Gamma_1)}\|v\|^2_{H^{3.5}}\\
&\indeq+\int_0^tP(\Vert v\Vert_{H^{3.5}}, \Vert
w\Vert_{H^{5}(\Gamma_1)}, \Vert w_{t}\Vert_{H^{3}(\Gamma_1)} ) X\,ds .
\end{split} \end{align} Multiplying \eqref{velfin} with a small
constant $\epsilon_0\in (0, 1]$ and adding the resulting inequality to
\eqref{platestp3}, we obtain
  \begin{equation}\llabel{platestp1}
  \begin{split} \epsilon_0&\Vert
  v\Vert_{H^{3.5}}^2+h\|D^3w_t\|^2_{L^2}+\frac{4\lambda\mu
  h^3}{24(\lambda+2\mu)}\|D^3\partial_{\alpha\alpha}w\|^2_{L^2}
  %\\&\indeq\indeq
  + \frac{4\mu
  h^3}{24}\sum_{\alpha,\beta=1}^2\|D^3\partial_{\alpha\beta}w\|^2_{L^2}
  \\&\indeq \lec
  \|D^3w_t(0)\|_{L^2(\Gamma_1)}+\epsilon_1\|v\|^2_{H^{3.5}}+C_{\epsilon_1}\|v_0\|^2_{L^{2}}+\epsilon_0\Vert
  v(0)\Vert_{H^{3.5}}^2 + \epsilon_0 \Vert
  w_{t}\Vert_{H^{3}(\Gamma_1)}^2 \\&\indeq\indeq +
  \epsilon_0\epsilon^2\|w\|^2_{H^{5}(\Gamma_1)}\|v\|^2_{H^{3.5}}+C_{\epsilon_1}\int_0^tP(\|w\|_{H^5(\Gamma_1)},\|w_t\|_{H^3(\Gamma_1)},\|q\|_{H^{2.5}})
  X \,ds.
  \end{split}
  \end{equation} 
Applying \eqref{EQ03} and using the Poincar\'e inequality, we then get
  \begin{equation}
  \begin{split} \epsilon_0&
  \Vert
  v\Vert_{H^{3.5}}^2
  +h\|w_t\|^2_{H^{3}(\Gamma_1)}
  +\frac{\lambda\mu
  h^3}{C(\lambda+2\mu)}
  \|w\|^2_{H^{5}(\Gamma_1)}
  %\\&\indeq\indeq
  + \frac{\mu
  h^3}{C}
  \|w\|^2_{H^{5}(\Gamma_1)}  
  \\&\indeq \lec
  \|D^3w_t(0)\|_{L^2(\Gamma_1)}+\epsilon_1\|v\|^2_{H^{3.5}}+C_{\epsilon_1}\|v_0\|^2_{L^{2}}+\epsilon_0\Vert
  v(0)\Vert_{H^{3.5}}^2 + \epsilon_0 \Vert
  w_{t}\Vert_{H^{3}(\Gamma_1)}^2 \\&\indeq\indeq +
  \epsilon_0\epsilon^2\|w\|^2_{H^{5}(\Gamma_1)}\|v\|^2_{H^{3.5}}+C_{\epsilon_1}\int_0^tP(\|w\|_{H^5(\Gamma_1)},\|w_t\|_{H^3(\Gamma_1)},\|q\|_{H^{2.5}})
  X \,ds.
  \end{split}
   \label{EQ57}
  \end{equation} 
First, we let $\epsilon_0$ be
sufficiently small to absorb the fifth term on the right-hand side
into the second term on the left-hand side. Then, we choose $\epsilon$
sufficiently small to absorb the sixth term on the right-hand side
into the first and the fourth term on the left-hand side. Lastly, we choose
$\epsilon_1$ sufficiently small to absorb the second term on the right
hand side to bound
  \begin{equation}\label{platestp4} \begin{split}
   &
  \epsilon_0
  \Vert v\Vert_{H^{3.5}}^2
  +h\|w_t\|^2_{H^{3}(\Gamma_1)}
  +\frac{\lambda\mu  h^3}{C(\lambda+2\mu)}
  \|w\|^2_{H^{5}(\Gamma_1)}
  %\\&\indeq\indeq
  + \frac{\mu h^3}{C}
  \|w\|^2_{H^{5}(\Gamma_1)}  
   \\&\indeq
    \lec\|w_t(0)\|_{H^3(\Gamma_1)}^2
    +\Vert v(0)\Vert_{H^{3.5}}^2
    +\int_0^tP(\|w\|_{H^5(\Gamma_1)},\|w_t\|_{H^3(\Gamma_1)}) X\,ds
   ,
  \end{split}
  \end{equation}
where we also used~\eqref{EQ78}.
Combining \eqref{EQ107} and \eqref{platestp4}, we get
  \begin{equation}
  \begin{split}
   &
  \epsilon_0
  \Vert v\Vert_{H^{3.5}}^2
  +h\|w_t\|^2_{H^{3}}
  +\frac{\lambda\mu  h^3}{C(\lambda+2\mu)}  \|w\|^2_{H^{5}(\Gamma_1)}
  %\\&\indeq\indeq
  + \frac{\mu h^3}{C}
  + \|w\|^2_{H^{5}(\Gamma_1)}
  + X
   \\&\indeq
    \lec\|w_t(0)\|_{H^3}^2
    +\Vert v(0)\Vert_{H^{3.5}}^2
    + X(0)
    +\int_0^tP(\|w\|_{H^5(\Gamma_1)},\|w_t\|_{H^3(\Gamma_1)}, X)\,ds
    .
  \end{split}
   \label{EQ58}
  \end{equation}
Applying a
Gronwall argument on \eqref{EQ107} and \eqref{platestp4}
(see~\cite{Bi}), we obtain an
estimate \begin{equation} \Vert v\Vert_{H^{3.5}} + \Vert
w\Vert_{H^{5}(\Gamma_1)} + \Vert w_{t}\Vert_{H^{3}(\Gamma_1)} +
X^{1/2} \lec M \llabel{EQ141} \end{equation} on $[0,T_0]$,
where $T_0$ depends on the size of initial data,
and the
proof of the theorem is concluded.  \end{proof}

\begin{Remark}
{\rm
We can generalize the estimate \eqref{platestp4} to fractional Sobolev spaces. A brief outline of the argument is provided here.
Testing \eqref{plate} with $\displaystyle \Lambda^{2(2+\delta)}w_t$, with $\delta\in (0,1.5]$, we obtain
\begin{equation}\label{plambda}
h(\Lambda^{2+\delta} w_{tt},\Lambda^{2+\delta} w_t)+(L_Kw,\Lambda^{2(2+\delta)}w_t)=(q,\Lambda^{2(2+\delta)}w_t)
.
\end{equation}
We now take a closer look at the term $(L_Kw,\Lambda^{2(2+\delta)}w_t)$. Using \eqref{elastics} and \eqref{elastico}, we write
 \begin{equation}\label{mm}
 	\begin{split}
   		(L_Kw,\Lambda^{2(2+\delta)}w_t)&=\frac{h}{2}\int_{\Gamma_1}a^{\alpha\beta\sigma\tau}G_{\alpha\beta}(w)(G'(w)\Lambda^{2(2+\delta)}w_t)_{\sigma\tau}
  		\\&\indeq +
  		\frac{h^3}{24}\int_{\Gamma_1}a^{\alpha\beta\sigma\tau}R_{\alpha\beta}(w)(R'(w)\Lambda^{2(2+\delta)}w_t)_{\sigma\tau}\\
		&=M_1+M_2.
 \end{split}
 \end{equation}
We now bound each term on the right-hand side.
For the first term, we have
 \begin{equation}\llabel{m1aa}
	\begin{split}
		M_1&
		= \frac{a^{\alpha\beta\sigma\tau}h}{2}\int_{\Gamma_1}\partial_\alpha w\partial_\beta w(\partial_\sigma \Lambda^{2(2+\delta)}w_t)\partial_\tau w+\frac{a^{\alpha\beta\sigma\tau}h}{2}\int_{\Gamma_1}\partial_\alpha w\partial_\beta w\partial_\sigma w\partial_\tau \Lambda^{2(2+\delta)}w_t
		\\&\lesssim h\int_{\Gamma_1}\left|\Lambda^{2+\delta}\left(\partial^2_{\alpha\sigma} w\partial_\beta w\partial_\tau w\right)\right|\left|\Lambda^{2+\delta}w_t\right|
		\\&\lesssim h\int_{\Gamma_1}\left|\Lambda^{2+\delta}\left(\partial^2_{\alpha\sigma} w\partial_\beta w\partial_\tau w\right)-\left(\Lambda^{2+\delta}\partial^2_{\alpha\sigma} w\right)\partial_\beta w\partial_\tau w\right|\left|\Lambda^{2+\delta}w_t\right|\\
		&\indeq +h\int_{\Gamma_1}\left|\left(\Lambda^{2+\delta}\partial^2_{\alpha\sigma} w\right)\partial_\beta w\partial_\tau w\right|\left|\Lambda^{2+\delta}w_t\right|
,
	\end{split}
\end{equation}
from where, using the Kato-Ponce inequality to estimate the commutator terms,
 \begin{equation}\label{m1}
	\begin{split}
		M_1&
              \lec
		h\|\Lambda^{2+\delta}\left(\partial_\beta w\partial_\tau w\right)\|_{L^2(\Gamma_1)}\|w\|_{H^{4+\delta}(\Gamma_1)}\|w_t\|_{H^{2+\delta}(\Gamma_1)}+P(\|w\|_{H^{4+\delta}(\Gamma_1)},\|w_t\|_{H^{2+\delta}(\Gamma_1)})
		\\&\lesssim h\|\Lambda^{2+\delta}(\partial_\beta w\partial_\tau w)-(\Lambda^{2+\delta}\partial_\beta w)\partial_\tau w\|_{L^2(\Gamma_1)}\|w\|_{H^{4+\delta}(\Gamma_1)}\|w_t\|_{H^{2+\delta}(\Gamma_1)}
		\\&\indeq +h\|(\Lambda^{2+\delta}\partial_\beta w)\partial_\tau w\|_{L^2(\Gamma_1)}\|w\|_{H^{4+\delta}(\Gamma_1)}\|w_t\|_{H^{2+\delta}(\Gamma_1)}
   \\&\indeq
+P(\|w\|_{H^{4+\delta}(\Gamma_1)},\|w_t\|_{H^{2+\delta}(\Gamma_1)})\\
&\leq P(\|w\|_{H^{4+\delta}(\Gamma_1)},\|w_t\|_{H^{2+\delta}(\Gamma_1)})
.
	\end{split}
\end{equation}
\eold
For $M_2$, we may proceed exactly as in \eqref{r}, replacing $\displaystyle -\partial^{2\gamma}$ with $\Lambda^{2(2+\delta)}$ to obtain

\begin{equation}\label{m2}
M_2=\frac{1}{2}\frac{d}{dt}\left(\frac{4\lambda\mu h^3}{24(\lambda+2\mu)}\|\Lambda^{2+\delta}\partial_{\alpha\alpha}w\|^2_{L^2(\Gamma_1)} + \frac{4\mu h^3}{24}\sum_{\alpha,\beta=1}^2\|\Lambda^{2+\delta}\partial_{\alpha\beta}w\|^2_{L^2(\Gamma_1)}\right).
\end{equation}
Inserting \eqref{mm}, \eqref{m1}, and \eqref{m2} into \eqref{plambda}, we get
\begin{equation}\llabel{317analog}
\begin{split}
  &
  \frac{1}{2}\frac{d}{dt}\Bigg(h\|\Lambda^{2+\delta}w_t\|^2_{L^2(\Gamma_1)}
     +\frac{4\lambda\mu h^3}{24(\lambda+2\mu)}\|\Lambda^{2+\delta}\partial_{\alpha\alpha}w\|^2_{L^2(\Gamma_1)}
     + \frac{4\mu h^3}{24}\sum_{\alpha,\beta=1}^2\|\Lambda^{2+\delta}\partial_{\alpha\beta}w\|^2_{L^2(\Gamma_1)}\Bigg)
     \\&\indeq
     \leq(\Lambda^{2(2+\delta)}q,w_t)+P(\|w\|_{H^{4+\delta}(\Gamma_1)},\|w_t\|_{H^{2+\delta}(\Gamma_1)})
     ,
\end{split}
   \llabel{EQ11}
\end{equation}
which is analogous to the equation~(3.17) in~\cite{KT1}. Proceeding exactly as in \cite{KT1} from this point on, we obtain
\begin{equation} 
\Vert v\Vert_{H^{2.5+\delta}} + \Vert
w\Vert_{H^{4+\delta}(\Gamma_1)} + \Vert w_{t}\Vert_{H^{2+\delta}(\Gamma_1)} +
X^{1/2} \lec M 
   \llabel{EQ15}
\end{equation} 
on $[0,T_0]$, where
\begin{equation} X = \int_{\Omega_0}
   \llabel{EQ16}|\Lambda_3^{1.5+\delta}\theta|^2 \end{equation} and $\Omega_0=\mathbb{T}^2\times\mathbb{R}$.
}
\end{Remark}

\section{Estimates with normalized curvature tensor}
\label{sec04}
In this section, we consider the normalized version of the tensor $R$ that was defined in Section~\ref{elastic} and is given by
\begin{equation}\label{curvex}
R_{\alpha\beta}=\frac{1}{|a_1(w)\times a_2(w)|}\partial_\alpha a_\beta(w)\cdot n(w)-\partial_\alpha a_\beta\cdot n
,
\end{equation}
and this in fact is the actual change in the second fundamental form.

We now state a~priori estimates for the plate with the exact change in the curvature tensor. 

\cole
\begin{Lemma}\label{EstimatePlate}
Under the assumptions of Theorem~\ref{main} and with $R_{\alpha\beta}$ defined in \eqref{curvex}, we have
  \begin{equation}
  \begin{split}
  &h\|D^3w_t\|^2_{L^2(\Gamma_1)}
    +\frac{\lambda\mu h^3}{6(\lambda+2\mu)}\|D^3\partial_{\alpha\alpha}w\|^2_{L^2(\Gamma_1)}  + \frac{\mu h^3}{6}\sum_{\alpha,\beta=1}^2\|D^3\partial_{\alpha\beta}w\|^2_{L^2(\Gamma_1)}
  +\nu \|D^4 w_t\|_{L^2(\Gamma_1)}^2
  \\&\indeq
   \lec\|D^3w_t(0)\|_{L^2(\Gamma_1)}+\|v\|_{L^2}^{2/7}\|v\|_{H^{3.5}}^{12/7}+\int_0^tP(\|w\|_{H^5(\Gamma_1)},\|w_t\|_{H^3(\Gamma_1)},\|q\|_{H^{2.5}}).
\end{split}
   \llabel{EQ65}
   \end{equation}
   \end{Lemma}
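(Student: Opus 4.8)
The plan is to rerun the energy scheme of Section~\ref{platesec}: test \eqref{plate} with $-\partial^{2\gamma}w_t$, $|\gamma|=3$, and keep everything the same except for the bending bilinear form. The contributions of $hw_{tt}$ and $\nu\Delta w_t$, the membrane form $\am(t,w,-\partial^{2\gamma}w_t)$ (estimated verbatim as in \eqref{g}), and the cancellation of the top-order pressure term against the Euler-side quantity (the steps around \eqref{EQ326}, which do not involve~$R$) are all untouched. Thus the statement is exactly \eqref{platestp2} with $R$ replaced by the normalized tensor \eqref{curvex}, and the task is to redo $\ab(t,w,-\partial^{2\gamma}w_t)$ accordingly.

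First I would make $R$ explicit for the flat reference: since $a_\beta(w)=e_\beta+n\partial_\beta w$, $\partial_\alpha a_\beta(w)=n\partial^2_{\alpha\beta}w$, $n(w)=n-e_1\partial_1w-e_2\partial_2w$, and $\partial_\alpha a_\beta\cdot n=0$, one gets $R_{\alpha\beta}(w)=(1+|\nabla w|^2)^{-1/2}\partial^2_{\alpha\beta}w=\phi\,\partial^2_{\alpha\beta}w$ with $\phi=(1+|\nabla w|^2)^{-1/2}$, and hence $(R'(w)\xi)_{\sigma\tau}=\phi\,\partial^2_{\sigma\tau}\xi-\phi^3(\nabla w\cdot\nabla\xi)\,\partial^2_{\sigma\tau}w$. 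I would record that $\phi$ is a smooth function of $\nabla w$ with $\phi-1$ vanishing quadratically at $\nabla w=0$, that $w(0)=0$ together with the shortness of the time interval keeps $\|\nabla w\|_{L^\infty}$, and hence $\|\phi-1\|_{L^\infty}$, small there (so $1/2\le\phi\le1$), while $\|\phi\|_{H^4(\Gamma_1)}\le P(\|w\|_{H^5(\Gamma_1)})$.

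Substituting $\xi=-\partial^{2\gamma}w_t$ and writing $R_{\alpha\beta}(w)=\partial^2_{\alpha\beta}w+(\phi-1)\partial^2_{\alpha\beta}w$ and $R'(w)\xi=\partial^2_{\sigma\tau}\xi+\bigl((\phi-1)\partial^2_{\sigma\tau}\xi-\phi^3(\nabla w\cdot\nabla\xi)\partial^2_{\sigma\tau}w\bigr)$, I would split $\ab(t,w,-\partial^{2\gamma}w_t)$ into: (i) the integral $-\frac{h^3a^{\alpha\beta\sigma\tau}}{24}\int_{\Gamma_1}\partial^2_{\alpha\beta}w\,\partial^{2\gamma}\partial^2_{\sigma\tau}w_t$ — this is exactly the one computed in \eqref{r}, so it equals $\tfrac12\frac{d}{dt}$ of $\frac{4\lambda\mu h^3}{24(\lambda+2\mu)}\|D^3\partial_{\alpha\alpha}w\|^2_{L^2(\Gamma_1)}+\frac{4\mu h^3}{24}\sum_{\alpha,\beta}\|D^3\partial_{\alpha\beta}w\|^2_{L^2(\Gamma_1)}$, which is precisely the bending energy in the statement (the constants match); and (ii) a remainder $\mathcal R$, every summand of which carries at least one extra factor of $\phi-1$ or of $\nabla w$. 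In $\mathcal R$ I would integrate by parts, first turning the highest-order occurrences of $w_t$ into $\partial_t$ of spatial derivatives of $w$ (so the resulting quantities lie in $L^2$, since no more than five derivatives of $w$ ever occur once at most three are left on the $w_t$ factor) and then integrating in space. This puts $\mathcal R$ in the form $\frac{d}{dt}\mathcal N(t)+\mathcal R_0$, where $\mathcal N$ is a finite sum of integrals $\int_{\Gamma_1}(\text{coefficient carrying a factor }\phi-1\text{ or }\nabla w)\,\partial^\gamma\partial^2 w\,\partial^\gamma\partial^2 w$, so that $\mathcal N(0)=0$ by $w(0)=0$ and $|\mathcal N(t)|$ is small compared with the principal energy on the time interval and is absorbed on the left, whereas $\mathcal R_0$ involves at most five derivatives of $w$ and three of $w_t$ and is bounded by $P(\|w\|_{H^5(\Gamma_1)},\|w_t\|_{H^3(\Gamma_1)})$ using the two-dimensional embeddings $H^5(\Gamma_1)\hookrightarrow W^{4,4}(\Gamma_1)$, $H^5(\Gamma_1)\hookrightarrow W^{1,\infty}(\Gamma_1)$, and $H^3(\Gamma_1)\hookrightarrow W^{2,4}(\Gamma_1)$, in the style of \eqref{g}.

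Collecting (i), (ii), the pressure cancellation \eqref{EQ326}, and the $hw_{tt}$ and $\nu\Delta w_t$ contributions as in \eqref{platestp}, then absorbing $\mathcal N$ on the left and integrating in time, gives the asserted inequality with the displayed constants; the $\|v\|_{L^2}^{2/7}\|v\|_{H^{3.5}}^{12/7}$ term enters, as in \eqref{platestp2}, from the first term on the left of \eqref{EQ326} after the cancellation. The main obstacle is step~(ii): before reorganization several summands of $\mathcal R$ formally involve $\partial_t$ of up to eight spatial derivatives of $w$, so the pairing defining $\mathcal R$ is not absolutely convergent as written and must be given meaning by a careful integration by parts; the point is that the normalized correction always supplies a low-order factor ($\phi-1$ or $\nabla w$, carrying only first and second spatial derivatives of $w$) into which the surplus derivatives can be discharged, and that the genuine time derivatives that survive assemble into $\mathcal N$, which is of degree at least three in $w$, vanishes at $t=0$, and is therefore harmless. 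Everything downstream — the pressure estimate of Lemma~\ref{L03} and the velocity/curl/divergence closure of Theorem~\ref{main} — is unaffected.
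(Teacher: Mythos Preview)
Your approach is correct but follows a genuinely different route from the paper's. The paper does \emph{not} split $R$ around the non-normalized tensor; instead it keeps the weight $\phi^2=(1+|\nabla w|^2)^{-1}$ inside the bending energy. After integrating by parts to shift $\partial^\gamma$ off $w_t$, the top Leibniz term is
\[
\frac{h^3a^{\alpha\beta\sigma\tau}}{24}\int_{\Gamma_1}\frac{\partial^\gamma\partial^2_{\alpha\beta}w\,\partial^\gamma\partial^2_{\sigma\tau}w_t}{1+|\nabla w|^2},
\]
which is written as $\tfrac12\frac{d}{dt}$ of the \emph{weighted} quantity $\int \phi^2|\partial^\gamma\partial^2 w|^2$ plus a harmless remainder; the sub-leading Leibniz terms ($I_{22}$, $I_{23}$, $I_{24}$ in the paper) are bounded directly by $P(\|w\|_{H^5},\|w_t\|_{H^3})$. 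This yields \eqref{platestp3a} with the weighted norm $\|D^3\partial^2 w/|n(w)|\|_{L^2(\Gamma_1)}$ on the left, and only at the very last step does the paper invoke $|n(w)|\lesssim 1$ (which needs merely \emph{boundedness} of $\nabla w$) to recover the unweighted norm in the statement.

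Your scheme instead peels off the flat part via \eqref{r}, giving the unweighted energy immediately, and packages the entire normalized correction into $\mathcal N$ plus $\mathcal R_0$; absorbing $\mathcal N$ then relies on the \emph{smallness} of $\phi-1$ (equivalently of $\|\nabla w\|_{L^\infty}$) on the short time interval. Both routes close under the hypotheses of Theorem~\ref{main}. The paper's route is slightly more robust (boundedness rather than smallness of $\nabla w$ suffices) and avoids having to track a separate absorbable functional $\mathcal N$; your route has the conceptual advantage that the principal energy is the flat one from the outset. Note that the sub-leading Leibniz pieces (your $\mathcal R_0$, the paper's $I_{22}$--$I_{24}$) coincide for $|\gamma'|\ge 1$ since $\partial^{\gamma'}(\phi^2-1)=\partial^{\gamma'}\phi^2$, so whichever route you take, those terms require the same additional integrations by parts that the paper sketches.
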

\colb

\begin{proof}
For our geometry,
\begin{equation}\llabel{modr}
	R_{\alpha\beta}=\frac{1}{|a_1(w)\times a_2(w)|}\partial_\alpha a_\beta(w)\cdot n(w)-\partial_\alpha a_\beta\cdot n=\frac{\partial^2_{\alpha\beta}w}{\sqrt{1+(\partial_1w)^2+(\partial_2w)^2}}.
\end{equation}
The action of the Fr\'echet derivative of $R$ is given by
\begin{equation}\label{modrp}
	\begin{split}
		\left(R'(w)\xi\right)_{\alpha\beta}&=\left(\lim_{h\to0}\frac{R(w+h\xi)-R(w)}{h}\right)_{\alpha\beta}\\
		&=-\frac{\partial_1w\partial_1\xi+\partial_2w\partial_2\xi}{(1+(\partial_1w)^2+(\partial_2w)^2)^{3/2}}\partial^2_{\alpha\beta}w+\frac{\partial^2_{\alpha\beta}\xi}{\sqrt{1+(\partial_1w)^2+(\partial_2w)^2}}
      .
	\end{split}
\end{equation}
Let $\gamma, \gamma_1, \gamma_2\in \mathbb{N}_0^2$ be such that
$|\gamma_1|=1$, $|\gamma_2|=2$, and 
$|\gamma|=3$, with $\partial^{\gamma}=\partial^{\gamma_1}\partial^{\gamma_2}$. This  then leads to the following modifications to~\eqref{r}:
\begin{equation}\label{L}
  \begin{split}
   &
    -\frac{h^3a^{\alpha\beta\sigma\tau}}{24}\int_{\Gamma_1}R_{\alpha\beta}(w)\left[R'(w)\partial^{2\gamma}w_t\right]_{\sigma\tau}
   \\&\indeq
  =\frac{h^3a^{\alpha\beta\sigma\tau}}{24}\int_{\Gamma_1}
	\frac{\partial_1w\partial_1\partial^{2\gamma}w_t+\partial_2w\partial_2\partial^{2\gamma}w_t}{(1+(\partial_1w)^2+(\partial_2w)^2)^{2}}\partial^2_{\alpha\beta}w\partial^2_{\sigma\tau}w
   \\&\indeq\indeq
	\indeq-\frac{h^3a^{\alpha\beta\sigma\tau}}{24}\int_{\Gamma_1}\frac{\partial^2_{\alpha\beta}w\partial^2_{\sigma\tau}\partial^{2\gamma}w_t}{1+(\partial_1w)^2+(\partial_2w)^2}
      \\&\indeq
	= I_1+I_2.
	\end{split}
\end{equation}
We now individually handle the two terms on the right-hand side of~\eqref{L}. For the first term on the right-hand side, we have
\begin{equation}\label{4.5}
	\begin{split}
	I_1&=\frac{h^3a^{\alpha\beta\sigma\tau}}{24}\int_{\Gamma_1}\frac{\partial_1w\partial_1\partial^{2\gamma}w_t+\partial_2w\partial_2\partial^{2\gamma}w_t}{(1+(\partial_1w)^2+(\partial_2w)^2)^{2}}\partial^2_{\alpha\beta}w\partial^2_{\sigma\tau}w\\
	&\leq \frac{h^3}{24}\|D^2w\|_{L^\infty(\Gamma_1)}^2\int_{\Gamma_1}(|D^4\partial_1w| |D^2\partial_1 w_t| + |D^4\partial_2w| |D^2\partial_2 w_t|)\\
	&\lec {h^3}\|D^5w\|_{L^2(\Gamma_1)}^2\|D^3w_t\|_{L^2(\Gamma_1)}
        ,
	\end{split}
\end{equation}
while the second term on the right-hand side of \eqref{L} may be rewritten as
\begin{equation}\label{sec}
	\begin{split}
		I_2&=
\frac{h^3a^{\alpha\beta\sigma\tau}}{24}\int_{\Gamma_1}\frac{\partial^2_{\alpha\beta}\partial^\gamma w\partial^2_{\sigma\tau}\partial^\gamma w_t}{1+(\partial_1w)^2+(\partial_2w)^2}
   \\&\indeq
   +\frac{h^3a^{\alpha\beta\sigma\tau}}{24}\int_{\Gamma_1}\partial^2_{\alpha\beta}\partial^{\gamma_2}w\partial^2_{\sigma\tau}D^3w_t\partial^{\gamma_1}(1+(\partial_1w)^2+(\partial_2w)^2)^{-1}
     \\&\indeq
     +\frac{h^3a^{\alpha\beta\sigma\tau}}{24}\int_{\Gamma_1}\partial^2_{\alpha\beta}\partial^{\gamma_1}w\partial^2_{\sigma\tau}\partial^\gamma w_t\partial^{\gamma_2}(1+(\partial_1w)^2+(\partial_2w)^2)^{-1}
     \\&\indeq
     +\frac{h^3a^{\alpha\beta\sigma\tau}}{24}\int_{\Gamma_1}\partial^2_{\alpha\beta}w\partial^2_{\sigma\tau}\partial^\gamma w_t\partial^\gamma(1+(\partial_1w)^2+(\partial_2w)^2)^{-1}\\
     &=I_{21}+I_{22}+I_{23}+I_{24}
     .
	\end{split}
\end{equation}
We now bound the last three terms on the right-hand side of~\eqref{sec}. Let $\partial^{\gamma_2}=\partial^{\gamma_3}\partial^{\gamma_4}$, with $|\gamma_3|=|\gamma_4|=1$. For the second term, we integrate by parts multiple times to obtain
\begin{equation}\llabel{4.10}
\begin{split}
	I_{22}&=\frac{h^3a^{\alpha\beta\sigma\tau}}{24}\int_{\Gamma_1}\partial^2_{\alpha\beta}\partial^{\gamma_2}w\partial^2_{\sigma\tau}D^3w_t\partial^{\gamma_1}(1+(\partial_1w)^2+(\partial_2w)^2)^{-1}\\
	&=-\frac{h^3a^{\alpha\beta\sigma\tau}}{24}\int_{\Gamma_1}\partial^2_{\alpha\beta}\partial^{\gamma_3}w \partial^2_{\sigma\tau}\partial^{\gamma_4}\partial^\gamma w_t\partial^{\gamma_1}(1+(\partial_1w)^2+(\partial_2w)^2)^{-1}\\
	&\indeq-\frac{h^3a^{\alpha\beta\sigma\tau}}{24}\int_{\Gamma_1}\partial^2_{\alpha\beta}\partial^{\gamma_3}w\partial^2_{\sigma\tau}\partial^\gamma w_t\partial^{\gamma_4}\partial^{\gamma_1}(1+(\partial_1w)^2+(\partial_2w)^2)^{-1}\\
	&\lec\|\partial^2Dw\|_{L^\infty(\Gamma_1)}\|\partial^2Dw_t\|_{L^2(\Gamma_1)}\|D^4(1+(\partial_1w)^2+(\partial_2w)^2)^{-1}\|_{L^2(\Gamma_1)}\\
	&\leq P(\|D^5w\|_{L^2(\Gamma_1)},\|D^3w_t\|_{L^2(\Gamma_1)}).
	\end{split}
\end{equation}
The third term on the right-hand side of \eqref{sec} may be bounded using integrating by parts as
\begin{equation}\llabel{4.11}
\begin{split}
I_{23}&=\frac{h^3a^{\alpha\beta\sigma\tau}}{24}\int_{\Gamma_1}\partial^2_{\alpha\beta}\partial^{\gamma_1}w\partial^2_{\sigma\tau}\partial^\gamma w_t\partial^{\gamma_2}(1+(\partial_1w)^2+(\partial_2w)^2)^{-1}\\
&\lec \|\partial^2_{\alpha\beta}Dw\|_{L^\infty(\Gamma_1)}\|\partial^2_{\sigma\tau}Dw_t\|_{L^2(\Gamma_1)}\|D^4(1+(\partial_1w)^2+(\partial_2w)^2)^{-1}\|_{L^2(\Gamma_1)}\\
&\leq P(\|D^5w\|_{L^2(\Gamma_1)},\|D^3w_t\|_{L^2(\Gamma_1)}).
\end{split}
\end{equation}
Lastly, for the fourth term, integrating by parts again multiple times, we get
\begin{equation}\llabel{4.12}
\begin{split}
I_{24}&=\frac{h^3a^{\alpha\beta\sigma\tau}}{24}\int_{\Gamma_1}\partial^2_{\alpha\beta}w\partial^2_{\sigma\tau}\partial^\gamma w_t\partial^\gamma(1+(\partial_1w)^2+(\partial_2w)^2)^{-1}\\
&=-\frac{h^3a^{\alpha\beta\sigma\tau}}{24}\int_{\Gamma_1}\partial^2_{\alpha\beta}w\partial^2_{\sigma\tau}\partial^{\gamma_1}\partial^\gamma w_t\partial^{\gamma_2}(1+(\partial_1w)^2+(\partial_2w)^2)^{-1}\\
&\indeq-\frac{h^3a^{\alpha\beta\sigma\tau}}{24}\int_{\Gamma_1}\partial^2_{\alpha\beta}\partial^{\gamma_1}w\partial^2_{\sigma\tau}\partial^\gamma w_t\partial^{\gamma_2}(1+(\partial_1w)^2+(\partial_2w)^2)^{-1}\\
&\lec  \|\partial^2_{\alpha\beta}D^3w\|_{L^2(\Gamma_1)}\|\partial^2_{\sigma\tau}Dw_t\|_{L^2(\Gamma_1)}\|D^2(1+(\partial_1w)^2+(\partial_2w)^2)^{-1}\|_{L^\infty(\Gamma_1)}\\
&\leq P(\|D^5w\|_{L^2(\Gamma_1)},\|D^3w_t\|_{L^2(\Gamma_1)}).
\end{split}
\end{equation}
We rewrite the first term on the right-hand side of \eqref{sec} as
\begin{equation}\llabel{eq413}
\begin{split}
	I_{21}&=\frac{h^3a^{\alpha\beta\sigma\tau}}{24}\int_{\Gamma_1}\frac{\partial^2_{\alpha\beta}\partial^\gamma w\partial^2_{\sigma\tau}\partial^\gamma w_t}{1+(\partial_1w)^2+(\partial_2w)^2}\\
	&=\frac{h^3\lambda\mu}{6(\lambda+2\mu)}\int_{\Gamma_1}\frac{\partial^2_{\alpha\alpha}\partial^\gamma w\partial^2_{\sigma\sigma}\partial^\gamma w_t}{1+(\partial_1w)^2+(\partial_2w)^2} +\frac{\mu h^3}{6}\int_{\Gamma_1}\frac{\partial^2_{\alpha\beta}\partial^\gamma w\partial^2_{\alpha\beta}\partial^\gamma w_t}{1+(\partial_1w)^2+(\partial_2w)^2}\\
	&=\frac{h^3\lambda\mu}{12(\lambda+2\mu)}\frac{d}{dt}\int_{\Gamma_1}\frac{(\partial^2_{\alpha\alpha}\partial^\gamma w)^2}{1+(\partial_1w)^2+(\partial_2w)^2}+ \frac{\mu h^3}{12}\frac{d}{dt}\int_{\Gamma_1}\frac{(\partial^2_{\alpha\beta}\partial^\gamma w)^2}{1+(\partial_1w)^2+(\partial_2w)^2}
	\\&\indeq
	+\frac{h^3\lambda\mu}{6(\lambda+2\mu)}\int_{\Gamma_1}\frac{(\partial^2_{\alpha\alpha}\partial^\gamma w)^2(\partial_1w\partial_1w_t+\partial_2w\partial_2w_t)}{(1+(\partial_1w)^2+(\partial_2w)^2)^2}
	\\&\indeq
	+\frac{\mu h^3}{6}\int_{\Gamma_1}\frac{(\partial^2_{\alpha\beta}\partial^\gamma w)^2(\partial_1w\partial_1w_t+\partial_2w\partial_2w_t)}{(1+(\partial_1w)^2+(\partial_2w)^2)^2}\\
	&=I_{211}+I_{212}+I_{213}+I_{214}.
\end{split}
\end{equation}
The terms $I_{211}$ and $I_{212}$ are coercive.
The last two terms on the right-hand side are bounded as
\begin{equation}\label{eq414}
\begin{split}
	I_{213}+I_{214}&=\frac{h^3\lambda\mu}{6(\lambda+2\mu)}\int_{\Gamma_1}\frac{(\partial^2_{\alpha\alpha}\partial^\gamma w)^2(\partial_1w\partial_1w_t+\partial_2w\partial_2w_t)}{(1+(\partial_1w)^2+(\partial_2w)^2)^2}\\
	&\indeq+\frac{\mu h^3}{6}\int_{\Gamma_1}\frac{(\partial^2_{\alpha\beta}\partial^\gamma w)^2(\partial_1w\partial_1w_t+\partial_2w\partial_2w_t)}{(1+(\partial_1w)^2+(\partial_2w)^2)^2}\\
	&\leq \left(\frac{h^3\lambda\mu}{6(\lambda+2\mu)}\|\partial^2_{\alpha\alpha}\partial^\gamma w\|_{L^2(\Gamma_1)}+\frac{\mu h^3}{6}\|\partial^2_{\alpha\beta}\partial^\gamma w\|_{L^2(\Gamma_1)}\right)
\\&
\indeq\indeq\indeq\indeq\indeq\indeq\indeq\indeq\indeq\indeq\indeq\indeq\indeq\indeq\times
\|\partial_1w\partial_1w_t+\partial_2w\partial_2w_t\|_{L^\infty(\Gamma_1)}\\
	&\leq P(\|w\|_{H^5(\Gamma_1)},\|w_t\|_{H^3(\Gamma_1)}).
	\end{split}
\end{equation}
We can now obtain a~priori estimates for the plate by inserting  \eqref{d6}, \eqref{g}, \eqref{L}, \eqref{4.5}, \eqref{sec}--\eqref{eq414} into \eqref{plateen}, giving
	\begin{equation}\label{4.15}
		\begin{split}
		&\frac{1}{2}\frac{d}{dt}\Bigg[h\|D^3w_t\|^2_{L^2(\Gamma_1)}+\frac{\lambda\mu h^3}{6(\lambda+2\mu)}\left\|\frac{D^3\partial_{\alpha\alpha}w}{|n(w)|}\right\|_{L^2(\Gamma_1)}  +	 \frac{\mu h^3}{6}\left\|\frac{D^3\partial_{\alpha\beta}w}{|n(w)|}\right\|^2_{L^2(\Gamma_1)}\Bigg]
        \\&\indeq\indeq
		+\nu \|D^4 w_t\|_{L^2(\Gamma_1)}^2
		\\&\indeq
		\leq-(D^6q,w_t)_{L^2(\Gamma_1)}+P(\|w\|_{H^5(\Gamma_1)},\|w_t\|_{H^3(\Gamma_1)}),
		\end{split}
	\end{equation}
where $|n(w)|=\sqrt{1+(\partial_1w)^2+(\partial_2w)^2}$ is the Euclidean norm of the dynamic normal. Integrating \eqref{4.15} in time and repeating the steps in Section~\ref{platesec} to eliminate the higher-order term containing pressure (details in \cite[Lemma 3.2]{KT1}), we get
	\begin{equation}\label{platestp3a}
		\begin{split}
			&h\|D^3w_t\|^2_{L^2(\Gamma_1)}+\frac{\lambda\mu h^3}{6(\lambda+2\mu)}\left\|\frac{D^3\partial_{\alpha\alpha}w}{|n(w)|}\right\|_{L^2(\Gamma_1)}  +	 \frac{\mu h^3}{6}\left\|\frac{D^3\partial_{\alpha\beta}w}{|n(w)|}\right\|^2_{L^2(\Gamma_1)}\\
			&\lesssim\|w_t(0)\|_{H^3(\Gamma_1)}^2+\|v\|_{L^2}^{2/7}\|v\|_{H^{3.5}(\Gamma_1)}^{12/7}+\int_0^tP(\|w\|_{H^5(\Gamma_1)},\|w_t\|_{H^3},\|q\|_{H^{2.5}(\Gamma_1)}).
		\end{split}
	\end{equation}
Now, using \eqref{fdef}, \eqref{psi}, and \eqref{matest}, we may write
\begin{equation}\label{final}
	|n(w)|\lesssim 1+\|\nabla w\|_{L^{\infty}(\Gamma_1)} \lesssim 1+\|w\|_{H^3(\Gamma_1)}\lesssim 1+\|\psi\|_{H^{3.5}}
\lec  1
.
\end{equation}
Combining \eqref{platestp3a} and \eqref{final}, we obtain the statement of the lemma.
\end{proof}

\begin{Remark}
{\rm
Note that the a~priori plate estimates for the exact curvature tensor in \eqref{platestp3a} are exactly the same as the one obtained in~\eqref{platestp2}. Hence, Theorem~\ref{main} holds for the exact curvature tensor case as well.
}
\end{Remark}

\section{Construction of solutions}

To construct solutions, we employ a fixed point technique to solve the Euler equation \eqref{eulerr} with given variable coefficient matrix $E$ and given non-homogeneous boundary data $w_t$ in the boundary condition~\eqref{EQ21}.

\subsection{Euler equations with given coefficients and non-homogeneous boundary data}

Let $T>0$. Suppose $v_{0} \in H^{3.5}(\Omega)$, and assume that
$\tilde{w}$ is a given function on $[0,T] \times \Gamma_{1}$
such that
the following assumptions hold.

\begin{assumption}\label{WA}
{\rm
1.  The function $\tilde{w}$ and its time derivatives satisfy
  \begin{equation}
   \partial^{j}_{t} \tilde{w} \in L^{\infty}([0,T]; H^{5-2j}(\Gamma_{1}))
  \comma j=0,1,2.
   \llabel{EQ64}
  \end{equation}
2. At the time $t=0$, we have $\tilde{w}=0$.\\
3. The time derivative $\tilde{w_{t}}$ satisfies the zero mean condition $\int_{\Gamma_{1}} \tilde{w}_{t}  =0$.\\
4. The divergence-free condition $\div v_{0}=0$ holds.\\
5. The impermeability condition $(v_{0})_{3} |_{\Gamma_{0}} =0$ holds.\\
6. The compatibility condition $\tilde{w}_{t}(0,\cdot)=(v_{0})_{3} |_{\Gamma_{1}}$ holds.
}
\end{assumption}

Next, let $\tilde{\psi}$ be the harmonic extension of $1+\tilde{w}$ to the domain $\Omega$, i.e.,
\begin{align}
   \begin{split}
   &\Delta \tilde{\psi} = 0
     \inon{on $\Omega$}
   \\&
   \tilde{\psi}(x_1,x_2,1,t)=1+\tilde{w}(x_1,x_2,t)
     \inon{on $\Gamma_1\times [0,T]$}
   \\&
   \tilde{\psi}(x_1,x_2,0,t)=0
     \inon{on $\Gamma_0\times [0,T]$}
   .
   \end{split}
   \label{EQ189}
  \end{align}
By the above assumptions, it follows that $\psi(0,x) =x_{3}$, $\tilde{E}(0)=I$, and $ \nabla \partial_{t} \tilde{\psi} \in C^{1}([0,T]; C^{1}(\Omega))$.

Furthermore, we assume the following.

\begin{assumption}\label{det}
{\rm For all $t \in [0,T]$, there exists a constant $c >0$ such that $\partial_{3} \tilde{\psi} \geq c $.
}
\end{assumption}

The last assumption can be realized by choosing $T$ sufficiently small given that $\partial_{3} \tilde{\psi} =1 $ at $t=0$.

We consider the system of equations
  \begin{align}
   \begin{split}
   &
    \partial_{t} v_i
    + v_1 \tilde{E}_{j1} \partial_{j} v_i
    + v_2 \tilde{E}_{j2} \partial_{j} v_i
    + (v_3-\psi_t)\tilde{E}_{33} \partial_{3} v_i
    + \tilde{E}_{ki}\partial_{k}q=0
    \\&
    \tilde{E}_{ki} \partial_{k}v_i=0 
    ,
   \end{split}
   \label{EQ188}
  \end{align}
with the boundary conditions described below.
  
Here, $\tilde{E}$ is 
defined  as the inverse of the matrix $\nabla \tilde{\eta}$ where $\tilde{\eta}= (x_{1}, x_{2}, \tilde{\psi})$ and consequently
 \begin{align}
  \begin{split}
   \nabla   \tilde{\eta}=
        \left( \begin{matrix} 1 & 0 & 0 \\
          0 & 1& 0\\
         -\fractext{\partial_{1} \tilde{\psi}} {\partial_{3} \tilde{\psi}}  
            &       -\fractext{\partial_{2} \tilde{\psi}} {\partial_{3} \tilde{\psi}}  
            & \fractext{1} {\partial_{3} \tilde{\psi}}\end{matrix}\right)  
     .
  \end{split}
  \label{EQq190}
  \end{align}
By  our assumptions, it follows that the matrix $\nabla \tilde{\eta}$ is non-singular 
on $[0,T]$ with a
well-defined inverse~$\tilde{E}$, i.e., $\partial_{3} \tilde{\psi} \neq 0$.
Thus the matrix $\tilde{E}(\tilde{w})$ depends on the given data $\tilde{w}$ and has the explicit expression  
  \begin{align}
  \begin{split}
      \tilde{E}=
        \left( \begin{matrix} 1 & 0 & 0 \\
          0 & 1& 0\\
         -\fractext{\partial_{1} \tilde{\psi}} {\partial_{3} \tilde{\psi} }  
            &       -\fractext{\partial_{2} \tilde{\psi} } {\partial_{3} \tilde{\psi} }  
            & \fractext{1} {\partial_{3} \tilde{\psi} }\end{matrix}\right)  
    .
  \end{split}
  \label{EQ190}
  \end{align}
We again denote by $\tilde{F}$ the corresponding  cofactor matrix of $\tilde{E}$, or explicitly
  \begin{align}
  \begin{split}
   \tilde{F}
      = (\partial_{3} \tilde{\psi})\tilde{a}
      =
      \begin{pmatrix}
       \partial_{3}\tilde{\psi}&  0 & 0 \\
       0 & \partial_{3}\tilde{\psi}& 0 \\
       -\partial_{1} \tilde{\psi} & -\partial_{2}\tilde{\psi} & 1
     \end{pmatrix}
     ,
  \end{split}
     \label{EQ191}
   \end{align}
which again satisfies the Piola identity~\eqref{piola}.
Moreover, we impose the following assumption.

\begin{assumption}\label{small}
{\rm
We have
  \begin{align}
   \Vert \tilde{E} -I \Vert_{L^{\infty}([0,T];H^{4.5}(\Omega))} 
   \leq \epsilon 
   \label{EQ197}
  \end{align} 
and 
  \begin{align}
   \Vert \tilde{F} -I \Vert_{L^{\infty}([0,T];H^{4.5}(\Omega))} \leq \epsilon 
   ,
   \label{EQ198}
  \end{align} 
for some $\epsilon >0$ sufficiently small. 
}
\end{assumption}

On the bottom rigid boundary, we assume the usual impermeability condition
  \begin{equation}
   v_3=0
   \inon{on $\Gamma_0$}
   ,
   \label{EQ193}
  \end{equation}
while on the top we prescribe
  \begin{equation}
     \tilde{F}_{3i}v_i = \tilde{w}_{t}
     \inon{on $\Gamma_1$}
    .
   \label{EQ194}
  \end{equation}
Note that we have the estimate
  \begin{align}
   \Vert \tilde{E}  \Vert_{L^{\infty}([0,T];H^{s-1/2}(\Omega))} 
   \leq 
   \Vert \tilde{w}  \Vert_{L^{\infty}([0,T];H^{s}(\Gamma_{1}))} 
   ,
   \label{EQ199}
  \end{align} 
for $t\in [0,T]$ and $s >1/2$.

We now invoke the following theorem pertaining to the above Euler system with given coefficients, the proof of which can be found in~\cite{KT1}.

\cole
\begin{Theorem}
\label{T04}
Assume that $v_{0}\in H^{3.5}$ and that $\tilde{w}$ is given satisfying Assumptions~\ref{WA},  and $\tilde{\psi}$ defined in \eqref{EQ189}, while the matrix $E$ is defined by \eqref{EQ190}  and satisfying Assumptions~\ref{det} and~\ref{small} on an interval $[0,T]$.
Then, there exists a local-in-time solution $(v,q)$ to the system \eqref{EQ188} with the boundary conditions \eqref{EQ193} and \eqref{EQ194}
such that
  \begin{align}
  \begin{split}
   &v \in L^{\infty}([0,T_{0}];H^{3.5}(\Omega)) 
    \\&
      v_{t} \in L^{\infty}([0,T_{0}];H^{1.5}(\Omega)) 
    \\&
     q \in L^{\infty}([0,T_{0}];H^{2.5}(\Omega)),
  \end{split}
   \llabel{EQ201}
  \end{align} 
for a time $T_{0}\in (0,T]$ depending on the initial data.
The solution is unique up to an additive function of time for the pressure~$q$.
Moreover, the solution 
$(v,q)$
satisfies the estimate
  \begin{align}
  \begin{split}
   \Vert v(t) \Vert_{H^{3.5}}
   + \Vert \nabla q(t) \Vert_{H^{1.5}} 
   \leq
   \Vert v_{0} \Vert_{H^{3.5}} 
    + \int_{0}^{t} P(
                      \Vert \tilde{w}(s) \Vert_{H^{5}(\Gamma_{1})},
                      \Vert \tilde{w}_{t}(s) \Vert_{H^{3}(\Gamma_{1})}
        )\, ds
  ,
  \end{split}
   \label{EQ50}
  \end{align}
for $t\in[0,T_{0})$.
\end{Theorem}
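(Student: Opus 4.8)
The plan is to construct $(v,q)$ by a fixed-point iteration in which the convective coefficients are frozen at the previous iterate. First I would reduce to homogeneous boundary data: choosing a vector field $V$ with $\tilde F_{3i}V_i=\tilde w_t$ on $\Gamma_1$, $V_3=0$ on $\Gamma_0$, and $\Vert V\Vert_{H^{3.5}}\lec\Vert\tilde w_t\Vert_{H^3(\Gamma_1)}$ (e.g.\ a harmonic-type lift), the difference $\bar v=v-V$ solves a system of the same structure with homogeneous boundary conditions and with extra forcing terms depending on $V$, hence controlled by $\Vert\tilde w\Vert_{H^5(\Gamma_1)}$ and $\Vert\tilde w_t\Vert_{H^3(\Gamma_1)}$. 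Given an iterate $v^{(n)}$, I would then define $(\bar v^{(n+1)},q^{(n+1)})$ as the solution of the \emph{linear} problem obtained by using $v^{(n)}$ in the transport coefficients of \eqref{EQ188}.

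For the linear problem one first solves for the pressure. Applying $\tilde F_{ji}\partial_j$ to the momentum equation and using the Piola identity \eqref{piola} together with the constraint $\tilde E_{ki}\partial_k v_i=0$ yields a second-order elliptic equation $\partial_j(\tilde F_{ji}\tilde E_{ki}\partial_k q)=\div f$ in $\Omega$, with Neumann data on $\Gamma_0$ obtained by contracting the momentum equation with $\tilde F_{3i}$, and with a Robin/Neumann-type condition on $\Gamma_1$ obtained after rewriting $\partial_t(\tilde F_{3i}v_i)=\tilde w_{tt}$; here, since $\tilde w$ is prescribed, no coupling to a plate equation is needed and the boundary data are simply given forcing terms. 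The coefficient matrix with entries $d_{jk}=\tilde F_{ji}\tilde E_{ki}$ satisfies $\Vert d-I\Vert_{L^\infty}\lec\epsilon$ by Assumption~\ref{small}, so Lemma~\ref{L09} (applied with $l=2.5$) gives $\Vert q\Vert_{H^{2.5}}\leq P(\Vert v\Vert_{H^{3.5}},\Vert\tilde w\Vert_{H^5(\Gamma_1)},\Vert\tilde w_t\Vert_{H^3(\Gamma_1)})$. With $q$ fixed, the momentum equation becomes a linear transport equation for $v$ whose characteristic vector field is tangent to $\partial\Omega$ by the structure of the constraint; one solves it by the method of characteristics (made rigorous by a standard mollification), and the homogeneous boundary conditions are preserved.

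Next I would derive the uniform-in-$n$ a~priori bound. Apply the tangential operator $\Lambda^{3.5}$ to the transport equation, pair with $\Lambda^{3.5}v$, and integrate; the commutators $[\Lambda^{3.5},\text{coeff}]$ are controlled using that the coefficients lie in $H^{4.5}$, i.e.\ in terms of $\Vert\tilde E\Vert_{H^{4.5}}$, $\Vert\psi_t\Vert_{H^{3.5}}$, and $\Vert v^{(n)}\Vert_{H^{3.5}}$. The pressure contribution $\int\Lambda^{3.5}(\tilde E_{ki}\partial_k q)\,\Lambda^{3.5}v$, after integration by parts, produces interior terms bounded by $\Vert q\Vert_{H^{2.5}}\Vert v\Vert_{H^{3.5}}$ and a boundary term controlled by $\Vert q\Vert_{H^{2.5}}\Vert\tilde w_t\Vert_{H^3(\Gamma_1)}$ (no cancellation with a plate term is required here, unlike in \eqref{EQ326}). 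This controls the tangential $H^{3.5}$ norm; the full $H^{3.5}$ norm is then recovered by trading normal derivatives for tangential ones via the momentum equation and the constraint $\tilde E_{ki}\partial_k v_i=0$, i.e.\ the div–curl/elliptic bootstrap already used in Section~\ref{sec03}. Combined with the pressure estimate this closes a Gronwall inequality of the form \eqref{EQ50} for the iterates, on a time $T_0$ depending only on $\Vert v_0\Vert_{H^{3.5}}$ and the norms of $\tilde w$; the bound $v_t\in H^{1.5}$ then follows directly by solving for $\partial_t v$ in the momentum equation.

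Finally, estimating the differences $\bar v^{(n+1)}-\bar v^{(n)}$ in $L^2$ (and $q^{(n+1)}-q^{(n)}$ in $H^1$), while controlling the variable coefficients by the uniform high-norm bounds, yields a contraction on a possibly smaller interval; hence $v^{(n)}\to v$ and $q^{(n)}\to q$ in the lower norm, and by interpolation with the uniform bounds in $H^s$ for every $s<3.5$, which suffices to pass to the limit in the quadratic convective terms, while weak-$*$ compactness upgrades the limit to $v\in L^\infty([0,T_0];H^{3.5})$, $q\in L^\infty([0,T_0];H^{2.5})$. Uniqueness up to an additive function of time in $q$ follows from the same difference estimate applied to two solutions. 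The main obstacle is the closure of the top-order energy estimate at the fractional level $H^{3.5}$: one must simultaneously handle the commutators generated by the variable coefficient $\tilde E$, the pressure contribution through the elliptic estimate with Robin/Neumann data, and the boundary forcing from the prescribed $\tilde w_t$, all while recovering the normal derivatives that $\Lambda^{3.5}$ does not see, and keeping every constant independent of the mollification used to make the transport solve rigorous.
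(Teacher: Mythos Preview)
Your overall architecture (iteration, elliptic pressure estimate via Lemma~\ref{L09} with Neumann data involving the prescribed $\tilde w_{tt}$, contraction in a lower norm) is reasonable, and the paper indeed defers the proof to~\cite{KT1}. However, the heart of your velocity estimate contains a genuine gap.

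You propose to apply $\Lambda^{3.5}$ to the momentum equation, pair with $\Lambda^{3.5}v$, and claim that the pressure contribution yields a boundary term ``controlled by $\Vert q\Vert_{H^{2.5}}\Vert\tilde w_t\Vert_{H^3(\Gamma_1)}$ (no cancellation with a plate term is required here).'' This is not correct. After integrating by parts and using the Piola identity and the boundary condition $\tilde F_{3i}v_i=\tilde w_t$, the leading boundary term is, up to commutators,
\[
\int_{\Gamma_1}\Lambda^{3}q\,\Lambda^{3}\tilde w_t,
\]
which carries six tangential derivatives in total. With $q\in H^{2.5}(\Omega)$ you only have $q|_{\Gamma_1}\in H^{2}(\Gamma_1)$, while $\tilde w_t$ is given in $H^{3}(\Gamma_1)$; any split $\int_{\Gamma_1}\Lambda^{a}q\,\Lambda^{6-a}\tilde w_t$ requires $a\le 2$ and $6-a\le 3$ simultaneously, which is impossible. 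The lifting to $\bar v$ with $\tilde F_{3i}\bar v_i=0$ does not rescue this: the leading term becomes $\int_{\Gamma_1}\Lambda^{3.5}q\,[\Lambda^{3.5},\tilde F_{3i}]\bar v_i$, and the commutator is only in $L^2(\Gamma_1)$ while $\Lambda^{3.5}q|_{\Gamma_1}\in H^{-1.5}(\Gamma_1)$. This is exactly why, in the coupled problem, the boundary pressure term in \eqref{EQ326} must be \emph{cancelled} against the plate energy rather than estimated.

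The approach in~\cite{KT1} avoids this obstruction by not performing a tangential energy estimate on $v$ at all. Instead one estimates the ALE vorticity $\zeta_i=\epsilon_{ijk}\tilde E_{mj}\partial_m v_k$, which satisfies a pure transport equation with \emph{no pressure term}; this gives $\Vert\zeta\Vert_{H^{2.5}}$ via a Gronwall argument (cf.\ the discussion around \eqref{EQ107}--\eqref{EQ35}). The full $\Vert v\Vert_{H^{3.5}}$ is then recovered through the div--curl estimate \eqref{divcurl}, using $\tilde E_{ki}\partial_kv_i=0$ for the divergence and the prescribed $\tilde w_t$ (together with the smallness \eqref{EQ197}) for $v\cdot N$ on $\Gamma_1$, as in \eqref{EQ131}--\eqref{EQ132}. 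The pressure is obtained separately from the elliptic problem with a pure Neumann condition on $\Gamma_1$ whose data is $-\tilde w_{tt}+\partial_t\tilde F_{3i}v_i-\cdots$, all of which are given or already controlled. Your proposal should be reorganized around the vorticity equation; the tangential $\Lambda^{3.5}$ estimate on $v$ cannot close as written.
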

\colb

\subsection{ Local existence theorem for the plate equation}

For a given forcing
and with periodic boundary conditions on $\mathbb{R}^{2}$, we invoke
the following
existence
theorem.

\cole
\begin{Theorem}
\label{T01}
Let $T>0$ and $\nu >0$. Given $w(0,\cdot)=0$,  $w_{t}(0,\cdot) \in  H^{3}(\Gamma_{1})$, and $q \in L^{2}([0,T]; H^{2}(\Gamma_{1}))$  such that $\int_{\Gamma_{1}} q =0$,
for the time $T_0$ such that
  \begin{equation}
   T_0^{-1}
   =
   P\left(\Vert w_t(0,\cdot)\Vert_{H^{3}(\Gamma_1)},\nu^{-1}\int_{0}^{T} \Vert q\Vert_{H^{2}(\Gamma_1)}^2\,ds\right)   
   \label{EQ02}
  \end{equation}
where $P$ is a polynomial,
there exists a unique solution $(w, w_{t})$ to the equation 
\begin{equation}
h w_{tt}+L_Kw - \nu \Delta  w_{t} =q \inon{ on $\Gamma_1 \times [0,T_0]$},
   \llabel{EQ60}
\end{equation} 
with periodic boundary conditions, such that 
\begin{equation}
(w, w_{t}) \in L^{\infty}([0,T_0]; H^{5}(\Gamma_{1}) \times H^{3}(\Gamma_{1})),
   \llabel{EQ59}
\end{equation}
and, for every $t\in [0,T_0]$,
  \begin{align}\label{EstPlate}
 \Vert w(t) \Vert^{2}_{H^{5}(\Gamma_{1})} + \Vert w_{t}(t) \Vert^{2}_{H^{3}(\Gamma_{1})} + \nu \int_{0}^{t} \Vert w_{t}(s) \Vert^{2}_{H^{4}(\Gamma_{1})} \, ds
   \lesssim   \Vert w_{1} \Vert^{2}_{H^{3} (\Gamma_{1})}
  .
 \end{align}
Moreover, $\int_{\Gamma_{1}} w_{t} =0$ for all $t \in [0,T_0]$.
\end{Theorem}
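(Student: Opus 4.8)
The plan is to construct the solution by a Galerkin scheme in the class of mean-zero functions, derive $\nu$-uniform energy estimates of order $H^{5}\times H^{3}$ by combining the coercivity of the bending part $L_{\text b}$ of $L_K$ with the dissipation $-\nu\Delta w_t$, and then pass to the limit; uniqueness follows from a low-order estimate on the difference of two solutions. Concretely, let $\{e_k:k\in\ZZ^2\setminus\{0\}\}$ be the Fourier basis of $\dot L^2(\Gamma_1)$ (the mean-zero functions), so that the orthogonal projection $P_n$ onto $V_n=\mathrm{span}\{e_k:0<|k|\le n\}$ commutes with every constant-coefficient operator, in particular with $-\Delta$, with $L_{\text b}$, and with $\Lambda$. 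We seek $w_n(t)\in V_n$ with $w_n(0)=0$, $\dot w_n(0)=P_nw_1$, solving
\[
h\langle \ddot w_n,e_k\rangle+\langle L_Kw_n,e_k\rangle+\nu\langle-\Delta\dot w_n,e_k\rangle=\langle q,e_k\rangle\comma 0<|k|\le n.
\]
Writing $w_n=\sum c_ke_k$, the map $(c_k)\mapsto(\langle L_Kw_n,e_k\rangle)$ is polynomial (cubic from the membrane part, linear from the bending part), so this second-order ODE system with forcing $\langle q,e_k\rangle\in L^2([0,T])$ has, by the Carath\'eodory--Picard theorem, a unique maximal solution with $c_k\in H^2$.

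Since $P_n$ commutes with Fourier multipliers, $-\partial^{2\gamma}\dot w_n$ (summed over $|\gamma|=3$), equivalently $\Lambda^{6}\dot w_n$, is an admissible test function, and using it reproduces the computation of \eqref{plateen}--\eqref{platestp}: the bending term yields, after integrating by parts, the coercive time derivative $\tfrac12\tfrac{d}{dt}\big(\tfrac{4\lambda\mu h^3}{24(\lambda+2\mu)}\|D^3\partial_{\alpha\alpha}w_n\|_{L^2}^2+\tfrac{4\mu h^3}{24}\sum_{\alpha,\beta}\|D^3\partial_{\alpha\beta}w_n\|_{L^2}^2\big)$ as in \eqref{r}; the membrane term is bounded by $P(\|w_n\|_{H^5(\Gamma_1)},\|\dot w_n\|_{H^3(\Gamma_1)})$ exactly as in \eqref{g}; and the damping gives $\nu\|D^4\dot w_n\|_{L^2}^2$. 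Crucially, the forcing term is integrated by parts so that only two derivatives fall on $q$, namely $-(\partial^{2\gamma}q,\dot w_n)=-\int_{\Gamma_1}(\partial^2 q)(\partial^4\dot w_n)$, and then absorbed into the dissipation as $\le\tfrac\nu2\|D^4\dot w_n\|_{L^2}^2+\tfrac C\nu\|q\|_{H^2(\Gamma_1)}^2$. Using bending coercivity and the Poincar\'e inequality (legitimate because every element of $V_n$ has zero mean), we get, for $\mathcal F_n(t)\simeq\|\dot w_n(t)\|_{H^3(\Gamma_1)}^2+\|w_n(t)\|_{H^5(\Gamma_1)}^2$,
\[
\frac{d}{dt}\mathcal F_n+\tfrac\nu2\|\dot w_n\|_{H^4(\Gamma_1)}^2\le P(\mathcal F_n)+\tfrac C\nu\|q\|_{H^2(\Gamma_1)}^2 .
\]
As $t\mapsto\tfrac C\nu\|q(t)\|_{H^2(\Gamma_1)}^2$ is in $L^1([0,T])$, an ODE comparison argument produces a time $T_0$ with $T_0^{-1}=P\big(\|w_1\|_{H^3(\Gamma_1)},\nu^{-1}\!\int_0^T\|q\|_{H^2(\Gamma_1)}^2\big)$ on which $\sup_t\mathcal F_n(t)+\nu\int_0^{T_0}\|\dot w_n\|_{H^4(\Gamma_1)}^2\lesssim\|w_1\|_{H^3(\Gamma_1)}^2$ uniformly in $n$; from the equation one then also bounds $\ddot w_n$ in $L^2([0,T_0];H^1(\Gamma_1))$.

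These bounds give, along a subsequence, $w_n\rightharpoonup w$ weak-$*$ in $L^\infty([0,T_0];H^5)$ and $\dot w_n\rightharpoonup\dot w$ weak-$*$ in $L^\infty([0,T_0];H^3)$, and by the Aubin--Lions lemma $w_n\to w$ in $C([0,T_0];H^{5-\delta})$ and $\dot w_n\to\dot w$ in $C([0,T_0];H^{3-\delta})$ for small $\delta>0$; this strong convergence identifies $L_Kw_n\to L_Kw$ in the limit equation, and weak lower semicontinuity yields \eqref{EstPlate}, the $L^\infty$-regularity being immediate. Integrating the equation over $\Gamma_1$ and using that $L_Kw$ and $\Delta w_t$ are in divergence form while $\int_{\Gamma_1}q=0$ gives $\tfrac{d}{dt}\int_{\Gamma_1}w_t=0$, hence $\int_{\Gamma_1}w_t\equiv\int_{\Gamma_1}w_1=0$. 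For uniqueness, the difference $\bar w$ of two solutions solves $h\bar w_{tt}+(L_Kw^{(1)}-L_Kw^{(2)})-\nu\Delta\bar w_t=0$ with zero data; testing with $\bar w_t$, the bending difference is coercive, while the cubic membrane difference, after one integration by parts, is bounded by $\tfrac\nu2\|\nabla\bar w_t\|_{L^2}^2+P(\|w^{(1)}\|_{H^5(\Gamma_1)},\|w^{(2)}\|_{H^5(\Gamma_1)})\|\bar w\|_{H^2(\Gamma_1)}^2$, so Gronwall forces $\bar w\equiv0$.

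The one genuinely delicate point is closing the top-order estimate with $q$ only in $H^2$: because the dissipation is second order while the equation is fourth order, the a~priori cancellation used in Section~\ref{platesec} is unavailable, and instead one must pair the top-order pressure term against the $\nu$-dissipation after redistributing derivatives as $\int_{\Gamma_1}(\partial^2q)(\partial^4\dot w_n)$ — which is precisely why the hypothesis requires $q\in L^2_tH^2$ and why $T_0$ degenerates as $\nu\to0$ through $\nu^{-1}\!\int_0^T\|q\|_{H^2(\Gamma_1)}^2$. The remaining ingredients — the membrane nonlinearity bounds from \eqref{g}, bending coercivity with the Poincar\'e inequality, and Aubin--Lions compactness — are routine.
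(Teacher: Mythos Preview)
Your proposal is correct and follows essentially the same route as the paper's proof: a Galerkin scheme on mean-zero functions, testing with $-\partial^{2\gamma}\dot w_n$ to reproduce \eqref{plateen}--\eqref{platestp}, absorbing the top-order pressure term into the viscous dissipation via $-(D^{6}q,w_t)=-(D^{2}q,D^{4}w_t)\le\tfrac{\nu}{2}\Vert D^{4}w_t\Vert_{L^2}^{2}+\tfrac{C}{\nu}\Vert D^{2}q\Vert_{L^2}^{2}$, then a nonlinear Gronwall/barrier argument and Aubin--Lions compactness. Your write-up is in fact more explicit than the paper on several points (the choice of the Fourier basis so that $P_n$ commutes with $L_{\text b}$ and $\Lambda$, the $L^{2}_{t}H^{1}$ bound on $\ddot w_n$ needed for compactness, and the uniqueness argument), and your closing paragraph correctly isolates why the hypothesis $q\in L^{2}_{t}H^{2}$ and the $\nu^{-1}$ dependence of $T_0$ are forced.
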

\colb
%\begin{Theorem}
%\label{T01}
%Let $T>0$ and $\nu >0$. Given $w(0,\cdot)=0$,  $w_{t}(0,\cdot) \in  H^{3}(\Gamma_{1})$, and $q \in L^{2}([0,T]; H^{2}(\Gamma_{1}))$  such that $\int_{\Gamma_{1}} q =0$, there exists $T_0>0$ and a unique solution $(w, w_{t})$ to the equation 
%\begin{equation}
%h w_{tt}+L_Kw - \nu \Delta  w_{t} =q \inon{ on $\Gamma_1 \times [0,T_0]$},
%   \llabel{EQ60}
%\end{equation} 
%with periodic boundary conditions, such that 
%\begin{equation}
%(w, w_{t}) \in L^{\infty}([0,T_0]; H^{5}(\Gamma_{1}) \times H^{3}(\Gamma_{1})),
%   \llabel{EQ59}
%\end{equation}
%and, for every $t\in [0,T_0]$,
%  \begin{align}\label{EstPlate}
% \Vert w(t) \Vert^{2}_{H^{5}(\Gamma_{1})} + \Vert w_{t}(t) \Vert^{2}_{H^{3}(\Gamma_{1})} + \nu \int_{0}^{t} \Vert w_{t}(s) \Vert^{2}_{H^{4}(\Gamma_{1})} \, ds
%   \lesssim   \Vert w_{1} \Vert^{2}_{H^{3} (\Gamma_{1})}+ \frac{1}{\nu}  \int_{0}^{T} \Vert q \Vert^2_{H^{2}(\Gamma_{1})} \, ds 
%  .
% \end{align}
%Moreover, $\int_{\Gamma_{1}} w_{t} =0$ for all $t \in [0,T_0]$.
%\end{Theorem}
%\colb

\begin{proof}[Proof of Theorem~\ref{T01}]
First, we apply a formal Gronwall argument, which we then justify
using the Galerkin method.
Applying 
the Cauchy-Schwarz inequality
and $\int_{0}^{t}w=\int_{0}^{t}w_t=0$
on \eqref{platestp}, we obtain
%\nts{we can't use Poincar\'e on the differentiated version}
 \begin{equation}
 \begin{split}
  &
  \frac{h}{C}\|w_t\|^2_{H^{3}(\Gamma_1)}
     +\frac{\lambda\mu h^3}{C(\lambda+2\mu)}\|w\|^2_{H^{5}(\Gamma_1)}
     + \frac{\mu h^3}{C}\|w\|^2_{H^{5}(\Gamma_1)}
     + \nu \int_{0}^{t}  \Vert w_t \Vert^{2}_{H^{4}(\Gamma_1)}
     \\&\indeq
     \lec
     \Vert w_1\Vert_{H^{3}(\Gamma_1)}^2
     +
     \frac{1}{\nu}
     \int_{0}^{t}
     \Vert D^2 q\Vert_{L^2}^2
     +P\left(\|w\|_{H^5(\Gamma_1)},\|w_t\|_{H^3(\Gamma_1)}\right)
     ,
  \end{split}
   \label{EQ52}
  \end{equation}
for all~$t$,
where $P$ is twice the polynomial in~\eqref{platestp}.
Recalling that $w(0)=0$,
we deduce
  \begin{align}
  \begin{split}
   &
   \sup_{0\leq t\leq T}
   \Vert w\Vert_{H^{5}(\Gamma_1)}^2
   +
   \sup_{0\leq t\leq T}
   \Vert w_t\Vert_{H^{3}(\Gamma_1)}^2
   \lec
   \Vert w_1\Vert_{H^{3}(\Gamma_1)}^2
   +
     \frac{1}{\nu}
   \int_{0}^{T} \Vert D^2 q\Vert_{L^2(\Gamma_1)}^2
    \comma t\in[0,T]
   ,
  \end{split}
   \label{EQ53}
  \end{align}
where $T=1/P(\Vert w_1\Vert_{H^{3}})$ and $P$ is a polynomial, which depends on the polynomial
in~\eqref{EQ52}.

The proof of the theorem uses the Galerkin method.
Here, we outline the key steps of the proof. Moreover, we focus on the more challenging case of the operator $L_K$ defined in Section~\ref{sec04}; the proof for the simpler case of the operator $L_K$ defined in Section \ref{elastic} can be derived analogously.

First, we apply the Galerkin approximation to construct the approximate solution~$w^n$. By integrating \eqref{plattee} over $\Gamma_1$ and using \eqref{elastico}, \eqref{elastics}, and \eqref{modrp}, we have
\begin{align*}
	0=\int_{\Gamma_1}w^n_t=\frac{d}{dt}\int_{\Gamma_1}w^n.
\end{align*}
Thus, the Poincar\'e inequality holds for the Galerkin
approximations. Proceeding as in the proof of Lemma
\ref{EstimatePlate}, we establish that the inequality \eqref{EQ52} holds for the Galerkin approximations.
By applying Gronwall's inequality, as above, we deduce that there exists $T_0>0$
as in the statement such that estimate \eqref{EstPlate} holds uniformly for the Galerkin approximations~$w^n$. Given that we have established higher-order estimates, we can invoke the Lions-Aubin lemma to obtain the appropriate strong convergence properties of the sequence $w^n$ and deduce that the limit satisfies the plate equation and the estimate~\eqref{EQ53}.
\end{proof}

\subsection{Local existence of solutions to the coupled system via a fixed point}
We now prove the following result on existence of a local-in-time solution to the regularized (damped system with $\nu >0$) via a fixed point argument.

\cole
\begin{Theorem}\label{T2}
Let $\nu >0$. Given initial data $v_{0} \in H^{3.5}(\Omega)$, $w_{0}=0$, and $w_{1} \in H^{3}(\Gamma_{1})$  satisfying Assumption~\ref{WA}~(4) and (6), and $\int_{\Gamma_{1}} w_{1}=0$,  there exists a local-in-time solution $(v,q,w, w_{t})$  to the system \eqref{eulerr}, \eqref{plattee}, \eqref{vg0}, and \eqref{EQ21} such that
\begin{align*}
&v \in L^{\infty}([0,T']; H^{3.5}(\Omega)) \cap C([0,T'];H^{1.5}(\Omega))\\
&v_{t} \in L^{\infty}([0,T']; H^{1.5}(\Omega))\\
&q  \in L^{\infty}([0,T']; H^{2.5}(\Omega))\\
&(w, w_{t}) \in L^{\infty}([0,T']; H^{5}(\Gamma_{1}) \times H^{3}(\Gamma_{1}))
,
\end{align*}
for some time $T'>0$, depending on the initial data and~$\nu$.
\end{Theorem}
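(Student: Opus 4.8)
The plan is to set up a Banach fixed-point (contraction) argument on the interface displacement. Given a candidate $\tilde w$ lying in a suitable closed ball of $Z_T = \{ \tilde w : \partial_t^j \tilde w \in L^\infty([0,T];H^{5-2j}(\Gamma_1)),\ j=0,1,2,\ \tilde w(0)=0,\ \tilde w_t(0)=(v_0)_3|_{\Gamma_1},\ \int_{\Gamma_1}\tilde w_t=0\}$, I would first form the harmonic extension $\tilde\psi$ via \eqref{EQ189}, and then shrink $T$ so that Assumptions~\ref{det} and~\ref{small} hold — this is legitimate since $\tilde E(0)=I$, $\partial_3\tilde\psi(0)=1$, and the map $\tilde w\mapsto\tilde E-I$ is Lipschitz from $Z_T$ into $L^\infty([0,T];H^{4.5}(\Omega))$ with a small constant once $T$ is small (using \eqref{EQ199} and the fundamental theorem of calculus in time). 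With $\tilde E$ in hand, Theorem~\ref{T04} produces $(v,q)$ solving \eqref{EQ188} with boundary data $\tilde w_t$, and in particular a well-defined trace $q|_{\Gamma_1}\in L^\infty([0,T_0];H^2(\Gamma_1))$ with $\int_{\Gamma_1}q=0$ by \eqref{PressureNormalized}. Feeding this $q$ as the forcing into Theorem~\ref{T01} yields a new displacement $w$ with $(w,w_t)\in L^\infty([0,T_0];H^5\times H^3)$ and $\int_{\Gamma_1}w_t=0$; moreover $w(0)=0$ and, by the compatibility condition in Assumption~\ref{WA}(6) together with the boundary condition $\tilde F_{3i}v_i=\tilde w_t$ restricted to $t=0$ (where $\tilde F=I$), one checks $w_t(0)=(v_0)_3|_{\Gamma_1}$, so the map $\mathcal T:\tilde w\mapsto w$ sends the ball into $Z_T$.

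Next I would verify that $\mathcal T$ maps a fixed ball $B_R\subset Z_T$ into itself for $T$ small: the energy estimate \eqref{EstPlate} in Theorem~\ref{T01} bounds $\|w\|_{H^5}^2+\|w_t\|_{H^3}^2$ by $C\|w_1\|_{H^3}^2$ with a constant \emph{independent} of the forcing (it sits inside $T_0$), while \eqref{EQ50} controls $\|v\|_{H^{3.5}}+\|\nabla q\|_{H^{1.5}}$ by $\|v_0\|_{H^{3.5}}+\int_0^t P(\|\tilde w\|_{H^5},\|\tilde w_t\|_{H^3})\,ds$, which over a short interval stays below the radius determined by the data; the condition \eqref{EQ02} on the plate time of existence stays valid because $\nu^{-1}\int_0^T\|q\|_{H^2}^2\,ds$ is controlled on $B_R$. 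For $w_{tt}$ one uses the equation \eqref{plattee}, $hw_{tt}=q-L_Kw+\nu\Delta w_t$, which together with the $H^5$ bound on $w$ and the $H^2$ bound on $q$ gives the $H^1$-in-space bound on $w_{tt}$ needed for the $j=2$ component of $Z_T$. So stability of $B_R$ is obtained by choosing $T'\le T_0$ small in terms of the data and~$\nu$.

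The contraction step is the main obstacle and the place requiring care. Given $\tilde w^{(1)},\tilde w^{(2)}\in B_R$ with difference $\delta\tilde w$, one must estimate $\|\mathcal T\tilde w^{(1)}-\mathcal T\tilde w^{(2)}\|$ in a weaker norm — naturally in $L^\infty([0,T'];H^3(\Gamma_1)\times H^1(\Gamma_1))$ for $(\delta w,\delta w_t)$, one derivative below the "high" norm, since the nonlinearities and the fixed-point map lose derivatives and only a lower-regularity metric makes the ball complete and the map contractive. The difference $\delta\tilde E$ of the coefficient matrices is Lipschitz in $\delta\tilde w$ with a small constant for small $T$; subtracting the two Euler systems and performing an energy estimate on $\delta v$, $\delta q$ — treating the difference of the transport terms and of $\tilde E_{ki}\partial_k q$ as perturbations, and using that $\tilde E-I$ is small — gives $\|\delta v\|_{L^\infty H^{?}}+\|\delta q\|\lesssim C(R)\,T'^{\theta}\,\|\delta\tilde w\|$ for some $\theta>0$; here one must be attentive that the div-curl and pressure elliptic estimates are applied at a regularity level where the difference argument closes, i.e.\ at $H^{2.5}$ for $\delta v$ and $H^{1.5}$ for $\delta q$ rather than the top level, exactly as the linear theorems are stated. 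Then the difference of the two plate problems, forced by $\delta q|_{\Gamma_1}$, is linear in $(\delta w,\delta w_t)$ with the \emph{same} coercive bending operator and $\nu$-damping; a standard energy estimate (using $\nu>0$ to absorb the top-order plate term and the fact that the membrane operator $L_m$ is a lower-order Lipschitz perturbation) yields $\|(\delta w,\delta w_t)\|_{L^\infty(H^3\times H^1)}\lesssim T'^{1/2}\|\delta q\|_{L^2 H^2}\lesssim C(R)\,T'^{\theta'}\|\delta\tilde w\|$. Choosing $T'$ small makes the composite constant $<1$, so $\mathcal T$ is a contraction on $B_R$ in the weak metric; since $B_R$ is closed in that metric (weak-$*$ compactness in the strong norm ensures the limit still lies in $B_R$), Banach's theorem gives a unique fixed point $w=\tilde w$. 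The resulting $(v,q,w,w_t)$ then solves the coupled system \eqref{eulerr}, \eqref{plattee}, \eqref{vg0}, \eqref{EQ21}, with the stated regularity, and the extra continuity $v\in C([0,T'];H^{1.5})$ follows from $v_t\in L^\infty([0,T'];H^{1.5})$ by interpolation.
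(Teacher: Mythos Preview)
Your proposal is correct and follows essentially the same route as the paper: form the map $\tilde w\mapsto(v,q)\mapsto w$ via Theorems~\ref{T04} and~\ref{T01}, show it preserves a ball of radius $M$ determined by the data, and prove contraction in a lower topology to invoke Banach's theorem. Two small corrections against the paper's execution: the contraction norm is taken at $H^{4}\times H^{2}\times L^{2}$ for $(W,W_t,W_{tt})$ (your ``one derivative below'' is the right idea, but you wrote $H^{3}\times H^{1}$ and omitted $W_{tt}$, which the pressure-difference estimate \eqref{EQ275} requires on the right-hand side), and the smallness factor $T'$ in the contraction comes solely from the plate step \eqref{EQ278} --- the Euler/pressure difference estimates \eqref{EQ274}--\eqref{EQ275} carry no such factor, and the pressure normalization $\int_{\Gamma_1}q=0$ must be \emph{imposed} (Theorem~\ref{T04} determines $q$ only up to a constant) rather than read off from~\eqref{PressureNormalized}.
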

\colb

\begin{proof}[Proof of Theorem]
We construct a solution to the above system using the iteration scheme with $(v^{(n+1)}, q^{(n+1)}) $ solving the system 
 \begin{align}
  \begin{split}
   &
    \partial_{t} v^{(n+1)}_i
    + v^{(n+1)}_1 E^{(n)}_{j1} \partial_{j} v^{(n+1)}_i
    + v^{(n+1)}_2 E^{(n)}_{j2} \partial_{j} v^{(n+1)}_i
   \\&\indeq
    + (v^{(n+1)}_3-\psi^{(n)}_t) E^{(n)}_{33} \partial_{3} v^{(n+1)}_i
    + E^{(n)}_{ki}\partial_{k}q^{(n+1)}
    =0
    ,
    \\&
   E^{(n)}_{ki} \partial_{k}v^{(n+1)}_i=0 
    \comma i, j,k=1,2,3
    ,
  \end{split}
   \label{EQ250}
  \end{align}
with the boundary conditions
   \begin{align}
 &  v^{(n+1)} = 0 \inon{on $\Gamma_0$}
   \llabel{EQ07}   
    \\  
 & \tilde{F}^{(n)}_{3i}v^{(n+1)}_i = w^{(n)}_t \inon{on $\Gamma_1$}
    \comma i=1,2,3
   \llabel{EQ06}
    ,
\end{align}
given $(w^{(n)}, w_{t}^{(n)})$; the matrix $E^{(n)}$ is determined from the harmonic extension $\psi^{(n)}$ of $1+w^{(n)}$ satisfying
 \begin{align}
   \begin{split}
      &\Delta \psi^{(n)}= 0
     \onon{\Omega}
   \\
   &
   \psi^{(n)}(x_1,x_2,1,t)=1+w^{(n)} (x_1,x_2,t)
     \inon{on $\Gamma_1$}
   \\
     &
   \psi^{(n)}(x_1,x_2,0,t)=0
     \inon{on $\Gamma_0$}
   \end{split}
   \llabel{EQ251}
  \end{align}
by the expression
  \begin{align}
   E^{(n)}=
    \begin{pmatrix} 1 & 0 & 0 \\
      0 & 1& 0\\
      -\fractext{\partial_{1} \psi^{(n)}} {\partial_{3} \psi^{(n)}}  
             &     -\fractext{\partial_{2} \psi^{(n)}} {\partial_{3} \psi^{(n)}}  
             & \fractext{1} {\partial_{3} \psi^{(n)}}
    \end{pmatrix}
    ,
  \label{EQ252} 
  \end{align}
and $F^{(n)}$ is the corresponding cofactor matrix. On the other hand, given $q^{(n+1)}$,
the iterate $w^{(n+1)}$ is determined from the plate equation
 \begin{align}
   &  w^{(n+1)}_{tt} +\Delta^{2} w^{(n+1)} - \nu \Delta_{2} w^{(n+1)}_{t} 
   =q^{(n+1)}
  \inon{on $\Gamma_{1}  \times[0,T]$}
   .
   \label{EQ253} 
  \end{align}
Now, assume that $w^{(n)}=\tilde{w}$
satisfies
\begin{equation}
\sum_{j=0}^{2} \Vert \partial^{j}_{t} w^{(n)} \Vert_{H^{5-2j}} \lesssim M
   = 2C( \Vert w_{1} \Vert_{H^{3} (\Gamma_{1})} 
	+
   \Vert v_{0} \Vert_{H^{3.5}} )
,
\end{equation} 
along with Assumption~\ref{WA},
and suppose that the matrix $E^{(n)}= \tilde{E}$ is defined as in \eqref{EQ90} under Assumptions~\ref{det} and~\ref{small}.
Then, by Theorem \ref{T04}, there exists a solution $(v^{(n+1)}, q^{(n+1)})$ to the system~\eqref{EQ250}, which satisfies the inequality
   \begin{align}
  \begin{split}
   &
   \Vert v^{(n+1)}(t) \Vert_{H^{3.5}}
   + \Vert \nabla q^{(n+1)}(t) \Vert_{H^{1.5}} 
   \\&\indeq
   \lec
   \Vert v_{0} \Vert_{H^{3.5}} 
    + \int_{0}^{t} P(
                      \Vert w^{(n)}(s) \Vert_{H^{5}(\Gamma_{1})},
                      \Vert w^{(n)}_{t}(s) \Vert_{H^{3}(\Gamma_{1})}
        )\, ds
    \\&\indeq
    \lec
    \Vert v_{0} \Vert_{H^{3.5}} 
    + \int_{0}^{t} P(M)\, ds
  \end{split}
  \label{EQ202}
  \end{align}
on a time interval $[0,T_{1}]$, where $T_{1}$ depends only on $M$, and with $q$ determined up to a constant. We may then apply Theorem~\ref{T01} to solve \eqref{EQ253} given $q^{(n+1)}$ after adjusting it by a constant so that $\int_{\Gamma_{1}} q^{(n+1)}=0$. We obtain the solution $(w^{(n+1)}, w_{t}^{(n+1)})$ satisfying the estimate
\begin{align}
  \begin{split}
  &
 \sum_{j=0}^{2} \Vert \partial_{t}^{j} w^{(n+1)}(t) \Vert_{H^{5-2j}(\Gamma_{1})}  + \nu \left( \int_{0}^{t} \Vert w^{(n+1)}_{t}(s) \Vert^{2}_{H^{4}(\Gamma_{1})} \, ds\right )^{1/2}  
   \lesssim  \Vert w_{1} \Vert_{H^{3} (\Gamma_{1})} 
  \end{split}
   \label{EQ41}
 \end{align}
 on $[0,T_{2}]$. Here, $T_2$ depends on $M$, $\nu$, and the pressure. Next, using the Poincar\'e inequality,
 followed by the trace inequality, we see that
 \begin{align}
 \begin{split}
  \int_{0}^{T_0} \Vert q^{(n+1)} \Vert^{2}_{H^{2}(\Gamma_{1})} \, ds & \lesssim  \sum_{i=1}^{2} \int_{0}^{T_0} \Vert \partial_{x_{i}} q^{(n+1)} \Vert^{2}_{H^{1}(\Gamma_{1})} \, ds   										\lesssim \int_{0}^{T_0} \Vert \nabla q^{(n+1)} \Vert^{2}_{H^{1.5}} \, ds 
.
 \end{split}
   \label{EQ55}
 \end{align}
Combining \eqref{EQ02}, \eqref{EQ202}, and \eqref{EQ55}, we
see that $T_2$ depends only on $\nu$ and $M$. Let
\begin{equation}
	T_0=\min\{T_1,T_2\}.
\end{equation}
Adding \eqref{EQ41} to \eqref{EQ202}, we obtain for $t\in[0,T_0]$
 \begin{align}
  \begin{split}
  &  \Vert v^{(n+1)}(t) \Vert_{H^{3.5}}
   + \Vert \nabla q^{(n+1)}(t) \Vert_{H^{1.5}} 
   + \sum_{j=0}^{2} \Vert \partial_{t}^{j} w^{(n+1)}(t) \Vert_{H^{5-j}(\Gamma_{1})}
   \\&\indeq
   \lesssim     \Vert v_{0} \Vert_{H^{3.5}} + \Vert w_{1} \Vert_{H^{3} (\Gamma_{1})} + T_{0} P\left( M \right) .
   \end{split}
   \label{EQ66}
  \end{align}
Readjusting $T_{0}$ to $T'$ so that 
  \begin{equation}
   T' = \min\left\{ \frac{M}{2CP(M)}, T_{0} \right\}
   ,
   \llabel{EQ46}
  \end{equation}
we conclude 
\begin{align}
  \begin{split}
  &  \Vert v^{(n+1)}(t) \Vert_{H^{3.5}}
   + \Vert q^{(n+1)}(t) \Vert_{H^{2.5}} 
   + \sum_{j=0}^{2} \Vert \partial_{t}^{j} w^{(n+1)}(t) \Vert_{H^{5-j}(\Gamma_{1})}
    \\&\indeq
      \lec
     \Vert w_{1} \Vert_{H^{3} (\Gamma_{1})} 
	+
  \Vert v_{0} \Vert_{H^{3.5}} 
    + T' P(M)  \lec M
    ,
      \end{split}
  \end{align}
for $t \in [0,T']$.
This insures that the solution at every step of the iteration belongs to a ball of fixed radius $M$ and that the iterates are well defined on a uniform interval $[0,T']$ with $T'$ depending on $M$ and $\nu$ only. In other words, the solution map from the $n^{\text{th}}$ element of the sequence to the next, and the mapping takes the ball of radius $M$ in the space $L^{\infty}([0,T'];H^{3.5}(\Omega)\times H^{2.5}(\Omega) ) $ into itself.

To apply the Banach fixed point theorem, we also need to establish that the solution map is contractive. 
This is obtained by considering the difference of two consecutive solutions which we define by
\begin{align}
\begin{split}
 &
 ( V^{(n+1)}, Q^{(n+1)}, Z^{(n+1)}, \Theta^{(n+1)}, \Psi^{n+1}, W^{(n+1)}, G^{(n+1)} )
  \\&\indeq
  =
  (
  v^{(n+1)} - v^{(n)} ,
   q^{(n+1)} - q^{(n)} ,
   \zeta^{(n+1)} - \zeta^{(n)},
   \\&\indeq\indeq\indeq\indeq\indeq\indeq\indeq
   \theta^{(n+1)} - \theta^{(n)} ,
  \psi^{(n+1)} - \psi^{(n)}, w^{(n+1)} - w^{(n)}, E^{(n+1)} - E^{(n)} )
  .
\end{split}
   \llabel{EQ67}
\end{align}
Observe that
the difference of vorticities
satisfies the equation obtained by subtracting the two  vorticity equations of the form \eqref{EQ98} satisfied by $\theta^{(n+1)}$ and $\theta^{(n)}$
and which reads
  \begin{align}
  \begin{split}
     &
    \partial_{t} \Theta^{(n+1)}_i
      + V^{(n+1)}_k E^{(n)}_{jk} \partial_{j} \theta^{(n+1)}_i
      + v^{(n)}_k G^{(n)}_{jk} \partial_{j} \theta^{(n+1)}_i
      + v^{(n)}_k E^{(n-1)}_{jk} \partial_{j} \Theta^{(n+1)}_i 
      \\&\indeq\indeq
       -\Psi^{(n)}_t E^{(n)}_{33} \partial_{3} \theta^{(n+1)}_i  
       -\psi^{(n-1)}_t G^{(n)}_{33} \partial_{3} \theta^{(n+1)}_i  
       -\psi^{(n-1)}_t E^{(n-1)}_{33} \partial_{3} \Theta^{(n+1)}_i  
     \\&\indeq\indeq 
       - \Theta^{(n+1)}_k E^{(n)}_{jk} \partial_{j} v^{(n+1)}_i
       - \theta^{(n)}_k G^{(n)}_{jk} \partial_{j} v^{(n+1)}_i
       - \theta^{(n)}_k E^{(n-1)}_{jk} \partial_{j} V^{(n+1)}_i
    =0 
    \inon{in $\Omega$}
    .
    \end{split}
  \llabel{EQ266}
\end{align}
The estimates are performed in the lower topology on solutions to this system, and we obtain the inequality
\begin{align}
   \Vert V^{(n+1)} \Vert_{H^{2.5}}
   \leq
   P(M)
   \bigl(
    \Vert W^{(n)} \Vert_{H^{4}(\Gamma_1)}
   + \Vert W_{t}^{(n)} \Vert_{H^{2}(\Gamma_{1})}
   \bigr)
   .
   \label{EQ274}
  \end{align}
For the pressure term $Q^{(n+1)}$, we utilize that it solves an elliptic boundary value problem 
with the Neumann type boundary conditions, and we have the elliptic estimate 
 \begin{align}
   \begin{split}
   \Vert \nabla Q^{(n+1)} \Vert_{H^{1.5}}
   \lec
   \Vert V^{(n+1)} \Vert_{H^{2.5}}
   + \Vert W_{tt}^{(n)} \Vert_{L^{2}(\Gamma_1)}
   + \Vert W^{(n)}_t \Vert_{H^{2}(\Gamma_1)}
   + \Vert W^{(n)}\Vert_{H^{4}(\Gamma_1)}
   ,
   \end{split}
   \label{EQ275}
  \end{align}
where the constant depends on~$M$.
The full norm of $Q$ can then be recovered after an adjustment by an appropriate constant via a Poincar\'e type inequality 
$\Vert Q^{(n+1)}\Vert_{L^{2}}   \lec
   \Vert \nabla Q^{(n+1)}\Vert_{L^2}
$
%  \begin{align}
%   \begin{split}
%\Vert Q^{(n+1)}\Vert_{L^{2}}   
%%   &= \left\Vert   Q^{(n+1)} - \frac{1}{|\Gamma_1|}\int_{\Gamma_1} Q^{(n+1)}\right\Vert_{L^2}
%   \lec
%   \Vert \nabla Q^{(n+1)}\Vert_{L^2}
%   ,
%   \end{split}
%   \llabel{EQ68}
%  \end{align}
so that $\Vert \nabla Q^{(n+1)} \Vert_{H^{1.5}}$ can be replaced by $\Vert Q^{(n+1)} \Vert_{H^{2.5}}$ in~\eqref{EQ275}.

The energy estimate for the plate equation 
yields
  \begin{align}
   \begin{split}
    &
    \Vert W^{(n+1)}(t) \Vert^{2}_{H^{4}(\Gamma_1)}
    + \Vert W_{t}^{(n+1)}(t) \Vert^{2}_{H^{2}(\Gamma_1)}
    + \Vert W_{tt}^{(n+1)}(t) \Vert^{2}_{L^{2}(\Gamma_1)}
    + \nu \int_{0}^{t} \Vert W_{t}^{(n+1)}(s) \Vert^{2}_{H^{3}(\Gamma_1)}\,ds
   \\&\indeq
    \lec \int_{0}^{t} \Vert Q^{(n+1)}(s) \Vert^{2}_{H^{1}(\Gamma_{1})} \, ds
    \lec \int_{0}^{t} \Vert Q^{(n+1)}(s) \Vert^{2}_{H^{1.5}(\Omega)} \, ds
     \lec 
     T '\Vert Q^{(n+1)} \Vert^{2}_{L^{\infty}([0,T];H^{1.5}(\Omega))}
   ,
   \end{split}
   \label{EQ278}
  \end{align}
for $t\in[0,T']$.
Using \eqref{EQ274} in  \eqref{EQ275} and then  \eqref{EQ275} in \eqref{EQ278}, we get
  \begin{align}
   \begin{split}
     &\sum_{j=0}^{2}
     \Vert \partial_{t}^{j} W^{(n+1)}(t) \Vert^{2}_{L^{\infty}H^{4-2j}((0,T') \times \Gamma_1)}
         + \nu \Vert W_{t}^{(n+1)} \Vert^{2}_{L^{2}H^{3}((0,T') \times \Gamma_1)}
     \\&\indeq
     \lec 
     T' 
     P(M)
      \sum_{j=0}^{2} \Vert \partial_{t}^{j} W^{(n)}(t) \Vert^{2}_{L^{\infty}H^{4-2j}((0,T')\times\Gamma_1)}
       .
   \end{split}
   \label{EQ280}
  \end{align}
Letting $T'$ be sufficiently small, we obtain the contractive property for~$W^{n}$. Similarly, multiplying
\eqref{EQ275} by a small constant and adding to \eqref{EQ274}, we obtain an analogous estimate for $V^{n+1}$ and $Q^{(n+1)}$ from which the contraction property is established once again by taking $T'$ sufficiently small.
By the Banach fixed point theorem, there exists a solution $(v,q,w) \in L^{\infty}([0,T']; H^{3.5} (\Omega) \times H^{2.5}(\Omega))  \times L^{\infty}([0,T'); H^{5}(\Gamma_{1}))$  
 to the system on a time interval $T'>0$ depending on $M$ and $\nu>0$ and the solution is unique in the ball of radius~$M$.
\end{proof}

\subsection{Passage to the limit as viscosity goes to zero}

To obtain solutions for the system without damping ($\nu =0$), we utilize the a~priori estimates which are independent of the parameter $\nu>0$. In particular, applying the a priori estimates to the constructed solution $(v^{(m)}, q^{(m)}, w^{(m)})$ given $\nu_{m}= \fractext{1}{m}$, we can derive uniform bounds on the solution, under which we can then obtain appropriate weakly and weakly-* convergent sequences allowing us to pass to the limit as $m \to \infty$ in the system and then finally obtain solutions of the system for $\nu=0$. We omit further details since they involve standard methods, which are similar to the scheme in~\cite{KT1}. We can therefore extend the local existence Theorem~\ref{T2} to the system when~$\nu=0$.

\section*{Acknowledgments}
IK was supported in part by the
NSF grant
DMS-2205493, while NM was supported by the Croatian Science Foundation under project number IP-2022-10-2962.

\iffalse
\appendix
\section{Gronwall argument for the plate}
Recall that by \eqref{platestp}, we have
\nts{we need to add the domain $\Gamma_1$ everywhere}
 \begin{equation}
 \begin{split}
  &
  \frac{1}{2}\frac{d}{dt}\Bigg(h\|D^3w_t\|^2_{L^2}
     +\frac{4\lambda\mu h^3}{24(\lambda+2\mu)}\|D^3\partial_{\alpha\alpha}w\|^2_{L^2}
     + \frac{4\mu h^3}{24}\sum_{\alpha,\beta=1}^2\|D^3\partial_{\alpha\beta}w\|^2_{L^2}\Bigg)
     + \nu \Vert D^4 w_t \Vert^{2}_{L^2}
     \\&\indeq
     \leq-(D^6q,w_t)+P(\|w\|_{H^5},\|w_t\|_{H^3(\Gamma_1)})
     ,
  \end{split}
   \label{EQ51}
  \end{equation}
from where, by using the Cauchy-Schwarz inequality
and $\int_{0}^{t}w=\int_{0}^{t}w_t=0$,
 \begin{equation}
 \begin{split}
  &
  \frac{h}{C}\|w_t\|^2_{H^{3}(\Gamma_1)}
     +\frac{\lambda\mu h^3}{C(\lambda+2\mu)}\|w\|^2_{H^{5}}
     + \frac{\mu h^3}{C}\|w\|^2_{H^{5}(\Gamma_1)}
     + \nu \Vert w_t \Vert^{2}_{H^{4}(\Gamma_1)}
     \\&\indeq
     \leq
     \frac{C}{\nu}
     \Vert D^2 q\Vert_{L^2}^2
     +P\left(\|w\|_{H^5},\|w_t\|_{H^3}\right)
     ,
  \end{split}
   \label{EQ52}
  \end{equation}
for all time,
where $P$ is twice the polynomial in~\eqref{EQ51}.
Recalling that $w(0)=0$
we obtain
  \begin{align}
  \begin{split}
   &
   \Vert w\Vert_{H^{5}(\Gamma_1)}^2
   +
   \Vert w_t\Vert_{H^{3}(\Gamma_1)}^2
   \lec
   \frac{1}{\nu}
   \int_{0}^{T} \Vert D^2 q\Vert_{L^2(\Gamma_1)}^2
   +
   M^2
    \comma t\in[0,T]   
  \end{split}
   \label{EQ53}
  \end{align}
where $T=1/P(\Vert w_1\Vert_{H^{3}})$ and $P$ is a polynomial, which depends on the polynomial
in~\eqref{EQ51}.

\fi


\begin{thebibliography}{[GGG]}
\small
\bibitem{BLW} %MR4566218
  A.~Balakrishna, I.~Lasiecka, and J.T.~Webster,
  \emph{Elastic stabilization of an intrinsically unstable hyperbolic flow-structure interaction on the 3{D} half-space}, Math. Models Methods Appl. Sci.~\textbf{33} (2023), no.~3, 505--545.
    %\MR{4566218}


\bibitem{BKS} %MR4780491
B.~Bene\v sov\'a, M.~Kampschulte, and S.~Schwarzacher,
  \emph{A variational approach to hyperbolic evolutions and fluid-structure interactions},
  J.~Eur. Math. Soc. (JEMS)~\textbf{26} (2024), no.~12, 4615--4697.
  %\MR{4780491}


\bibitem{Bi} %MR79154
I.~Bihari,
  \emph{A generalization of a lemma of {B}ellman and its application to uniqueness problems of differential equations},
  Acta Math. Acad. Sci. Hungar.~\textbf{7} (1956), 81--94.
    %\MR{79154}

\bibitem{BB} %MR0344713
J.P.~Bourguignon and H.~Brezis,
  \emph{Remarks on the Euler equation}, J.~Functional Analysis~\textbf{15} (1974), 341--363.
    %\MR{0344713}

\bibitem{CR} %MR1835394
P.G. Ciarlet and A.~Roquefort,
  \emph{Justification of a two-dimensional nonlinear shell model of {K}oiter's type},
  Chinese Ann. Math. Ser. B~\textbf{22} (2001), no.~2, 129--144.
  %\MR{1835394}

\bibitem{CDEG} %MR2166981
  A.~Chambolle, B.~Desjardins, M.J.~Esteban, and C.~Grandmont,
  \emph{Existence of weak solutions for the unsteady interaction of a viscous fluid with an elastic plate},
  J. Math. Fluid Mech.~\textbf{7} (2005), no.~3, 368--404.
  %\MR{2166981}
  
\bibitem{CS} %MR2644917
C.H.~Arthur Cheng and S.~Shkoller,
  \emph{The interaction of the 3{D} Navier-Stokes equations with a moving nonlinear {K}oiter elastic shell},
  SIAM J.~Math.\ Anal.~\textbf{42} (2010), no.~3, 1094--1155.
  %\MR{2644917}

\bibitem{CLW1} %MR3252887
I.~Chueshov, I.~Lasiecka, and J.~Webster,
  \emph{Flow-plate interactions: well-posedness and long-time behavior},
  Discrete Contin.\ Dyn.\ Syst.\ Ser.~S~\textbf{7} (2014), no.~5, 925--965.
    %\MR{3252887}

\bibitem{CLW2} %MR3003291
I.~Chueshov, I.~Lasiecka, and J.T.~Webster,
  \emph{Evolution semigroups in supersonic flow-plate interactions},
  J. Differential Equations~\textbf{254} (2013), no.~4, 1741--1773.
  %\MR{3003291}
  
\bibitem{piola}
 P.G. Ciarlet, 
   \emph{Three-dimensional elasticity}, Elsevier~(1988).

\bibitem{DEGL} %MR1871311
B.~Desjardins, M.J.~Esteban, C.~Grandmont, and P.~Le~Tallec,
  \emph{Weak solutions for a fluid-elastic structure interaction model},
  Rev.\ Mat.\ Complut.~\textbf{14} (2001), no.~2, 523--538.
    %\MR{1871311}

\bibitem{G} %MR2438783
C.~Grandmont,
  \emph{Existence of weak solutions for the unsteady interaction of a viscous fluid with an elastic plate},
  SIAM J.~Math.\ Anal.~\textbf{40} (2008), no.~2, 716--737.
    %\MR{2438783}

\bibitem{GH} %MR3466847
C.~Grandmont and M.~Hillairet,
  \emph{Existence of global strong solutions to a beam-fluid interaction system},
  Arch.\ Ration.\ Mech.\ Anal.~\textbf{220} (2016), no.~3, 1283--1333.
    %\MR{3466847}

\bibitem{GHL} %MR3955112
C.~Grandmont, M.~Hillairet, and J.~Lequeurre,
  \emph{Existence of local strong solutions to fluid-beam and fluid-rod interaction systems},
  Ann.\ Inst.\ H.~Poincar\'{e} C Anal.\ Non Lin\'{e}aire~\textbf{36} (2019),
  no.~4, 1105--1149.
    %\MR{3955112}

\bibitem{HLN} %MR3454559
A.~Hundertmark-Zau\v{s}kov\'{a}, M.~Luk\'{a}\v{c}ov\'{a}-Medvi\v{d}ov\'{a}, and \v{S}.~Ne\v{c}asov\'{a},
  \emph{On the existence of weak solution to the coupled fluid-structure
  interaction problem for non-{N}ewtonian shear-dependent fluid},
  J.~Math. Soc. Japan~\textbf{68} (2016), no.~1, 193--243.
    %\MR{3454559}

\bibitem{IKLT1}%{MR3168261}
M.~Ignatova, I.~Kukavica, I.~Lasiecka, and A.~Tuffaha, \emph{On
  well-posedness and small data global existence for an interface damped free
  boundary fluid-structure model}, Nonlinearity~\textbf{27} (2014), no.~3,
  467--499. 
  %\MR{3168261}

\bibitem{IKLT2} M.~Ignatova, I.~Kukavica, I.~Lasiecka, and A.~Tuffaha, 
{\em Small data global existence for a fluid-structure model},
Nonlinearity~\textbf{30} (2017), 848--898.

\bibitem{KNT} I.~Kukavica, \v S.~Ne\v casov\'a, and A.~Tuffaha,
  \emph{On the interaction between compressible inviscid flow and an elastic plate},
   arXiv:2311.08731.

\bibitem{KT1}
  I.~Kukavica and A.~Tuffaha,
  \emph{A free boundary inviscid model of flow-structure interaction}
  J.~Differential Equations (to appear), arXiv:2205.12103.

\bibitem{KT2}
  I.~Kukavica and A.~Tuffaha
  \emph{An inviscid free boundary fluid-wave model},
  Journal of Evolution Equations~\textbf{23} (2023),~p.~41.

\bibitem{L} %MR3233099
D.~Lengeler,
  \emph{Weak solutions for an incompressible, generalized {N}ewtonian fluid interacting with a linearly elastic {K}oiter type shell},
  SIAM J.~Math.\ Anal.~\textbf{46} (2014), no.~4, 2614--2649.
    %\MR{3233099}

\bibitem{LR} %MR3147436
D.~Lengeler and M.~R\r{u}\v{z}i\v{c}ka,
  \emph{Weak solutions for an incompressible {N}ewtonian fluid interacting with a {K}oiter type shell},
  Arch.\ Ration.\ Mech.\ Anal.~\textbf{211} (2014), no.~1, 205--255.
    %\MR{3147436}

\bibitem{MRR} %MR4189724
  D.~Maity, J.-P.~Raymond, and A.~Roy,
  \emph{Maximal-in-time existence and uniqueness of strong solution of a 3{D} fluid-structure interaction model},
  SIAM J.~Math.\ Anal.~\textbf{52} (2020), no.~6, 6338--6378.
    %\MR{4189724}

\bibitem{MC1} %MR3017292
B.~Muha and S.~\v Cani\'{c}, \emph{Existence of a weak solution to a nonlinear fluid-structure interaction problem modeling the flow of an incompressible, viscous fluid in a cylinder with deformable walls},
  Arch.\ Ration.\ Mech.\ Anal.~\textbf{207} (2013), no.~3, 919--968.
    %\MR{3017292}

\bibitem{MC2} %MR3121710
B.~Muha and S.~\v{C}ani\'{c},
  \emph{Existence of a solution to a fluid-multi-layered-structure interaction problem},
  J. Differential Equations~\textbf{256} (2014), no.~2, 658--706.
  %\MR{3121710}

\bibitem{MC3} %MR3482692
B.~Muha and S.~\v{C}ani\'{c}, 
  \emph{Existence of a weak solution to a fluid-elastic structure interaction problem with the {N}avier slip boundary condition}, 
  J.~Differential Equations~\textbf{260} (2016), no.~12,
  8550--8589. 
  %\MR{3482692}

\bibitem{MC4} %MR3450736
  B.~Muha and S.~\v{C}ani\'{c},
  \emph{Fluid-structure interaction between an incompressible, viscous 3{D} fluid and an elastic shell with nonlinear {K}oiter membrane energy},
  Interfaces Free Bound.~\textbf{17} (2015), no.~4, 465--495.
  %\MR{3450736}

\bibitem{MS} %MR4540753
  B.~Muha and S.~Schwarzacher,
  \emph{Existence and regularity of weak solutions for a fluid interacting with a non-linear shell in three dimensions},
  Ann.\ Inst.\ H.~Poincar\'{e} C Anal.~Non Lin\'{e}aire~\textbf{39} (2022), no.~6, 1369--1412.
  %\MR{4540753}

\bibitem{TW} %MR4042350
S.~Trifunovi\'{c} and Y.-G.~Wang, 
  \emph{Existence of a weak solution to the fluid-structure interaction problem in 3{D}}, 
  J.~Differential Equations~\textbf{268} (2020), no.~4, 1495--1531. 
  %\MR{4042350}

\bibitem{W} %MR2793551
J.T.~Webster,
  \emph{Weak and strong solutions of a nonlinear subsonic
     flow-structure interaction: semigroup approach},
  Nonlinear Anal.~\textbf{74} (2011), no.~10, 3123--3136.
    %\MR{2793551}

\end{thebibliography}
\end{document}